\theoremstyle{plain}
\newtheorem{thm}{Theorem}
\newtheorem{lem}[thm]{Lemma}
\newtheorem{cor}[thm]{Corollary}
\newtheorem{prop}[thm]{Proposition}
\newtheorem{conj}[thm]{Conjecture}
\theoremstyle{definition}
\newtheorem{defn}[thm]{Definition}
\newtheorem{defns}[thm]{Definitions}
\newtheorem{rmk}[thm]{Remark}
\newtheorem{rmks}[thm]{Remarks}
\newtheorem{nota}[thm]{Notation}
\newtheorem{claim}[thm]{Claim}
\numberwithin{thm}{section} \numberwithin{equation}{section}
\newcommand{\p}{\partial}
\newcommand{\eq}[2]{\begin{equation}\label{#1}#2 \end{equation}}
\newcommand{\ml}[2]{\begin{multline}\label{#1}#2 \end{multline}}
\newcommand{\ga}[2]{\begin{gather}\label{#1}#2 \end{gather}}
\newcommand{\Exp}{{\rm Exp}}
\newcommand{\gr}{{\rm gr}}
\newcommand{\inj}{\hookrightarrow}
\newcommand{\Pic}{{\rm Pic}}
\newcommand{\Hom}{{\rm Hom}}
\newcommand{\Spec}{{\rm Spec \,}}
\newcommand{\Char}{{\rm char}}
\newcommand{\sC}{{\mathcal C}}
\newcommand{\sF}{{\mathcal F}}
\newcommand{\sH}{{\mathcal H}}
\newcommand{\sK}{{\mathcal K}}
\newcommand{\sO}{{\mathcal O}}
\newcommand{\A}{{\mathbb A}}
\newcommand{\C}{{\rm  C}}    
\newcommand{\D}{{\rm D}}    
\newcommand{\G}{{\mathbb G}}
\newcommand{\N}{{\mathbb N}}
\renewcommand{\P}{{\mathbb P}}
\newcommand{\Q}{{\mathbb Q}}
\newcommand{\Z}{{\mathbb Z}}
\newcommand{\bS}{{\mathbb S}}
\newcommand{\et}{{\rm \acute{e}t}}
\newcommand{\nn}{{\newline\noindent }}
\newcommand{\pb}[1]{\langle #1 \rangle}
\newcommand{\dR}{{\rm dR}}
\newcommand{\bull}{\centerdot}
\newcommand{\Nis}{{\rm Nis}}
\newcommand{\Sm}{{\rm Sm}}
\newcommand{\Sch}{{\rm Sch}}
\newcommand{\st}{{\rm pro}}
\newcommand{\Sh}{{\rm Sh}}
\newcommand{\etNis}{{\et/\Nis}}
\newcommand{\pro}{{\rm pro}}
\newcommand{\cont}{{\rm cont}}
\newcommand{\cone}{{\rm cone}}
\newcommand{\cris}{{\rm cris}}
\newcommand{\bulle}{{\scriptstyle \bullet}}
\renewcommand{\S}{{\rm S}}
\newcommand{\CH}{{\rm CH}}
\newcommand{\BGL}{{\rm BGL}}
\newcommand{\ch}{{\rm ch}}
\newcommand{\rK}{{\rm K}}
\newcommand{\red}{0}
\newcommand{\ve}{{\varepsilon}}
\begin{document}

\title[$p$-adic deformation]{$p$-adic deformation of  algebraic cycle classes}
\author{Spencer Bloch}
\author{H{\'e}l{\`e}ne Esnault}
\author{Moritz Kerz}
\address{5765 S. Blackstone Ave., Chicago, IL 60637,
USA}
\address{ Fakult\"at f\"ur Mathematik, Universit{\"a}t Duisburg-Essen, 45117 Essen, Germany}
\address{Fakult\"at f\"ur Mathematik, Universit\"at Regensburg, 93040 Regensburg, Germany}
\email{spencer\_bloch@yahoo.com }
\email{esnault@uni-due.de}
\email{moritz.kerz@mathematik.uni-regensburg.de}
\date{October 19, 2012}
\thanks{The second author is supported by  the SFB/TR45,
the ERC Advanced Grant 226257 and the Chaire d'Excellence 2011, the third by the DFG
Emmy-Noether Nachwuchsgruppe ``Arithmetik \"uber endlich erzeugten K\"orpern''
.}
\begin{abstract} 
We study the $p$-adic deformation properties of algebraic cycle classes modulo
rational equivalence. 
We show that the crystalline Chern character of a vector bundle lies
in a certain part of the Hodge filtration if and only if, rationally,
the class of the vector bundle lifts to a formal pro-class in
$K$-theory on the $p$-adic scheme.
\end{abstract}

\maketitle
\begin{quote}

\end{quote}
\section{Introduction}

\noindent
In this note we study the deformation properties of algebraic cycle classes modulo rational
equivalence. In the end the main motivation for this is to construct new interesting
algebraic cycles out of known ones by means of a suitable deformation process.
In fact we suggest that one should divide such a construction into two steps: Firstly, one
should study formal deformations to infinitesimal thickenings and secondly, one should try
to algebraize these formal deformations.

We consider  the first problem of formal deformation in the
special situation of deformation of cycles in the $p$-adic direction for a scheme over a
complete $p$-adic discrete valuation ring. It turns
out that this part is -- suitably interpreted -- of a deep cohomological and $K$-theoretic
nature, related to $p$-adic Hodge theory, while the precise geometry of the varieties plays only a minor r\^ole.

In order to motivate our approach to the formal deformation of algebraic cycles we start
with the earliest observation of the kind we have in mind,  which is due to Grothendieck.
The deformation of the Picard group can be described in terms of Hodge theoretic data
via the first Chern class.

Indeed, consider a field $k$ of characteristic zero, $S= k[[t]]$, $X/S$ a smooth projective
variety and $X_n\hookrightarrow X$ the closed immersion defined by the ideal $(t^n)$.  
The Gau{\ss}-Manin connection
\ga{}{     \nabla: H^i_{\dR}(X/S)\to  \hat{\Omega}^1_{S/k} \hat \otimes    H^i_{\dR}(X/S)    \notag}
is trivializable over $S$ by \cite[Prop.~8.9]{Katz}, yielding an isomorphism from the horizontal de Rham classes
over $S$ to de Rham classes over $k$
\ga{}{ \Phi: H^i_{\dR}(X/S)^\nabla \xrightarrow{\sim} H^i_{\dR}(X_1/k) .  \notag }
An important property, which is central to this article, is that $\Phi$ does not induce an isomorphism of the Hodge filtrations
\ga{}{      H^i_{\dR}(X/S)^\nabla\cap  F^r H^i_{\dR}(X/S)
 \xrightarrow{\nsim}  F^rH^i_{\dR}(X_1/k)                 \notag}
in general.
This Hodge theoretic property of the map $\Phi$ relates to the exact obstruction sequence
\ga{}{     \Pic(X_n)\to \Pic(X_{n-1})\xrightarrow{{\rm Ob}}  H^2(X_1, \sO_{X_1})      \notag} 
via the first Chern class in de Rham cohomology, see \cite{Blo}.

These observations produce a proof for line bundles of the following version of Grothendieck's variational Hodge conjecture \cite[p.~103]{GdR}.
\begin{conj}\label{into.conjhodge}
For $\xi_1\in K_0(X_1)_{\Q} $ such that $$\Phi^{-1}\circ \ch (\xi_1)\: \in\: \bigoplus_r
F^rH^{2r}_{\dR}(X/S),$$ 
there is  a $\xi   \in K_0(X)_{\Q}$, such that $\ch(\xi|_{X_1})=\ch(\xi_1)\in \bigoplus_r H^{2r}_{\dR}(X_1/k). $ Here $\ch$ is the Chern character. 
\end{conj}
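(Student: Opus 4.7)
The plan is to split the argument into the two independent steps already advertised in the introduction: (i) formal deformation, producing a compatible pro-system $(\xi_n)_{n\ge 1}$ with $\xi_n\in K_0(X_n)_{\Q}$ lifting $\xi_1$, and (ii) algebraization, promoting such a pro-system to an actual class $\xi\in K_0(X)_{\Q}$ with $\ch(\xi|_{X_1})=\ch(\xi_1)$.

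For step (i) I would proceed by induction on $n$. At each stage one has a square-zero thickening $X_{n-1}\hookrightarrow X_n$ with ideal $J_n\cong t^{n-1}\sO_{X_1}$, and a relative $K$-theoretic obstruction to lifting $\xi_{n-1}$. Rationally, by Goodwillie's theorem, this obstruction is computable in relative negative cyclic homology; via the Hochschild--Kostant--Rosenberg isomorphism it identifies with a class in a Hodge-truncated cohomology group of $X_1$ (essentially a sum of $H^{j}(X_1,\Omega^{<r}_{X_1/k})$'s). The Chern character into de Rham cohomology over $S$, composed with the Gau{\ss}-Manin trivialization $\Phi$, packages all these obstructions into the single class $\Phi^{-1}\circ \ch(\xi_1)\in \bigoplus_r H^{2r}_{\dR}(X/S)$. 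The hypothesis that this class lies in $\bigoplus_r F^r H^{2r}_{\dR}(X/S)$ should translate, level by level, into the vanishing of every obstruction, so that a compatible family $(\xi_n)$ exists.

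For step (ii) I would invoke a Grothendieck-existence-type result to pass from the formal pro-class to a genuine class on $X$. For coherent sheaves on $X$ proper over $S=k[[t]]$, formal and algebraic sheaves coincide; combined with a $K$-theoretic continuity statement asserting that $K_0(X)_\Q \to \varprojlim_n K_0(X_n)_\Q$ is (rationally) surjective onto the pro-system one has just constructed, this should produce the desired $\xi$. The Chern character of $\xi|_{X_1}$ then agrees with that of $\xi_1$ by construction.

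The main obstacle is step (i): one has to pin down the $K$-theoretic obstruction to lifting $\xi_{n-1}$ and show that it is controlled \emph{precisely} by the Hodge-filtration condition on $\Phi^{-1}\circ \ch(\xi_1)$. This requires a compatibility between the Chern character into de Rham cohomology and the one into negative cyclic homology, together with a careful analysis of how $\nabla$ interacts with the Hodge filtration on $H^*_{\dR}(X/S)$ --- in particular the non-strictness of $\Phi$ with respect to the filtrations that is highlighted above. Step (ii), while classical in spirit, is also subtle because the $K$-theory of the formal completion of $X$ along $X_1$ need not a priori coincide with the inverse limit of the $K$-theory of the thickenings $X_n$, so genuine algebraicity of the limit class must be verified separately.
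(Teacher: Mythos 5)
This statement is Conjecture~\ref{into.conjhodge}, i.e.\ Grothendieck's variational Hodge conjecture, and the paper does \emph{not} prove it. It is stated as a conjecture, shown (via Deligne's ``partie fixe'' and the Cattani--Deligne--Kaplan theorem) to be equivalent to Grothendieck's original formulation, and noted to be a consequence of the Hodge conjecture --- hence open in general. The only case recalled as known in the introduction is that of line bundles ($\Pic$), where it follows from the obstruction sequence for $\Pic$ together with Grothendieck's formal existence theorem.

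The decisive gap in your proposal is step~(ii), the algebraization. You invoke Grothendieck existence ``for coherent sheaves'' together with a ``$K$-theoretic continuity statement'' asserting that $K_0(X)_\Q \to \varprojlim_n K_0(X_n)_\Q$ hits your pro-class. But that continuity statement is precisely the unsolved part: formal GAGA identifies $\mathrm{Coh}(X)$ with $\mathrm{Coh}$ of the formal completion of $X$ along the special fibre, and this is enough to give $\Pic(X)\xrightarrow{\sim}\varprojlim_n\Pic(X_n)$, but there is no analogue for $K_0$ (or for higher-codimension cycle classes). A compatible family of classes $\xi_n\in K_0(X_n)$ need not come from a class on $X$; concretely, a formal vector bundle of rank $\ge 2$ on the formal scheme need not algebraize, and even when the underlying coherent module algebraizes this does not control $K_0$-classes up to the requisite equivalence. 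The paper says this explicitly: ``Unlike for $\Pic$, there is no general approach to the algebraization problem known. In this note, we study the deformation problem.'' So your step~(ii) assumes what must be proved, and it is the hard, open part of the conjecture.

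On step~(i), your sketch (relative $K$-theory via Goodwillie, negative cyclic homology, HKR, matching obstructions with the Hodge-filtration defect of $\Phi^{-1}\circ\ch(\xi_1)$) is in the right spirit and corresponds to what the paper actually does --- but in the $p$-adic setting over $W(k)$, not over $k[[t]]$, and with quite different tools (syntomic complexes, de Rham--Witt, topological cyclic homology), culminating in Theorem~\ref{MainThm}, which is the deformation-only statement. That theorem replaces $K_0(X)$ by $\varprojlim_n K_0(X_n)$ exactly because the algebraization step is not available. So even the part of your argument that could be salvaged proves something weaker than the conjecture you set out to prove, and in a different (equicharacteristic-zero) setting than the one the paper treats.
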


In fact, using Deligne's ``partie fixe'' \cite[Sec.~4.1]{DH2}
and Cattani-Deligne-Kaplan's algebraicity theorem \cite[Thm~1.1]{CDK}, 
 one shows that Conjecture
\ref{into.conjhodge} is equivalent to Grothendieck's original formulation of the
variational Hodge conjecture and it would therefore
be a
consequence of the Hodge conjecture.

A $p$-adic analog of  Conjecture \ref{into.conjhodge} is suggested by Fontaine-Messing, it
is usually called the $p$-adic variational Hodge conjecture. Before we state it, we again
motivate it by the case of line bundles.

Let $k$ be a perfect field of characteristic $p>0$, $ W=W(k)$ be  the ring of Witt vectors
over $k$, $K={\rm frac}(W)$, $X/S$ be a smooth projective variety, $X_n\hookrightarrow X$ be the
closed immersion  defined by $(p^n)$; so $X_n=X\otimes_W W_n, W_n=W/(p^n)$.  Then
Berthelot constructs a crystalline-de Rham comparison isomorphism
\ga{}{ \Phi: H^i_{\dR}(X/W)\xrightarrow{\sim} H^i_{{\cris}} (X_1/W) , \notag }
which is recalled in Section \ref{criscoho}.
One also has a crystalline Chern character, see \eqref{cris.chernch},
\ga{}{\ch: K_0(X_1) \xrightarrow{} \bigoplus_r H^{2r}_{\cris}(X_1/W)_K. \notag}
Let us assume $p>2$. Then one has the exact obstruction sequence
\ga{intro.0}{ \varprojlim_n \Pic(X_n) \to \Pic(X_1)\xrightarrow{{\rm Ob}} H^2(X, p\sO_X) }
coming from the short exact sequence of sheaves
\ga{intro.1}{  1 \to (1+p\sO_{X_{n}})\to \sO^\times_{X_n}\to \sO^\times_{X_1}\to 1 }
and the $p$-adic logarithm isomorphism
\ga{intro.2}{   {\rm log}: 1+p\sO_{X_{n}}   \xrightarrow{\sim} p\sO_{X_n} . }
Grothendieck's formal existence theorem \cite[Thm.~5.1.4]{EGA3} gives an algebraization isomorphism
\ga{}{ \Pic(X)\xrightarrow{\sim} \varprojlim_n \Pic(X_n).\notag}
Using an idea of Deligne  \cite[p.~124 b)]{DelK3}, Berthelot-Ogus \cite{BO} relate  the
obstruction map in \eqref{intro.0}  to the Hodge level of the crystalline Chern class of a
line bundle. So altogether they prove  the line bundle version  of  Fontaine-Messing's $p$-adic variational  Hodge conjecture:

\begin{conj}\label{intro.padvarh}
 For $\xi_1\in K_0(X_1)_{\Q} $ such that 
$$\Phi^{-1}\circ \ch (\xi_1)\: \in\: \bigoplus_r F^rH^{2r}_{\dR}(X_K/K),$$ 
there is  a $\xi   \in K_0(X)_{\Q}$, such that $\ch(\xi|_{X_1})=\ch(\xi_1)\in 
\bigoplus_r H^{2r}_{\cris}(X_1/W)_K.
$ 
\end{conj}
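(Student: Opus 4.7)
My approach is to split the statement into a \emph{formal} lifting step and an \emph{algebraization} step, following the two-stage template already visible in the introduction's treatment of line bundles via the obstruction sequence (intro.0) and Grothendieck's formal existence theorem. In the first step I would produce, under the Hodge filtration hypothesis, a compatible formal lift of $\xi_1$ to a pro-class in $K_0$ of the tower $\{X_n\}$; in the second step I would algebraize this formal lift to an honest class $\xi \in K_0(X)_\Q$.

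For the formal step, I would exploit the fiber sequences of $K$-theory spectra
$$K(X_{n+1}, X_n) \longrightarrow K(X_{n+1}) \longrightarrow K(X_n)$$
associated with each infinitesimal thickening $X_n \hookrightarrow X_{n+1}$. Rationally, Goodwillie's theorem identifies the relative term with relative negative cyclic homology, and via the $p$-adic logarithm as in (intro.2) this is encoded by shifted pieces of the Hodge-filtered de Rham complex of $X_1$ tensored with the square-zero ideal $p^n\sO_{X_{n+1}}$. The upshot, paralleling (intro.0)--(intro.2) for line bundles, is that the obstruction to lifting $\xi_n \in K_0(X_n)_\Q$ with Chern character components in degrees $2r$ to a class in $K_0(X_{n+1})_\Q$ sits in the part of $\bigoplus_r H^{2r}_{\cris}(X_1/W)_K$ transverse to $F^r$ under the comparison $\Phi$. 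The hypothesis $\Phi^{-1}\ch(\xi_1) \in \bigoplus_r F^r H^{2r}_{\dR}(X_K/K)$ therefore kills every successive obstruction, and an induction produces a compatible system $(\xi_n) \in \varprojlim_n K_0(X_n)_\Q$, i.e.\ a formal $K$-theory class on the $p$-adic completion $\hat X$. The Chern character compatibility $\ch(\xi_n|_{X_1}) = \ch(\xi_1)$ is automatic from the construction.

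For the algebraization step, one must promote the formal pro-class $(\xi_n)$ to a class $\xi \in K_0(X)_\Q$. For a line bundle this is Grothendieck's formal existence theorem, which yields $\Pic(X) \xrightarrow{\sim} \varprojlim_n \Pic(X_n)$ because line bundles are a coherent datum. For arbitrary $K_0$-classes one would first have to represent the formal lift by a compatible system of vector bundles, or more generally perfect complexes, and then apply the existence theorem to each component. This is the genuine obstacle: rationally one does not know that $K_0(\hat X)_\Q$ is generated by algebraizable perfect complexes, and this is precisely where Fontaine-Messing's conjecture remains open. I therefore expect the formal half of the argument to be within reach of the cyclic homology and $p$-adic Hodge-theoretic input assembled in the introduction --- and, according to the abstract, this is exactly what the present paper achieves --- while the algebraization step will require genuinely new geometric ideas beyond pure deformation theory.
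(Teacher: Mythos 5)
You are attempting to prove Conjecture~\ref{intro.padvarh}, but the paper does not prove this statement: it is the Fontaine--Messing $p$-adic variational Hodge conjecture, which the authors explicitly leave open. What the paper does establish is Theorem~\ref{MainThm}, which is precisely the formal-deformation half of the conjecture, namely that the Hodge condition on $\Phi^{-1}\ch(\xi_1)$ is equivalent to $\xi_1$ lifting rationally to $\varprojlim_n K_0(X_n)$. Your decomposition into a formal lifting step followed by an algebraization step is exactly the paper's own framing, and your conclusion that the algebraization step remains genuinely open --- because, unlike $\Pic$, there is no analogue of Grothendieck's existence theorem for $\varprojlim_n K_0(X_n)$ --- is correct and matches the paper's remarks around \eqref{picalg}.

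That said, your sketch of the formal step would not by itself yield Theorem~\ref{MainThm}, and this is where a genuine gap appears. You propose a stage-by-stage obstruction induction up the tower $\{X_n\}$, invoking rational Goodwillie and the $p$-adic logarithm to identify each relative $K$-theory term with a piece of Hodge-filtered de Rham cohomology, and then asserting that the Hodge hypothesis ``kills every successive obstruction.'' The paper does not argue this way, and the difficulties that the naive version elides are substantial. First, even granting the identification of each infinitesimal obstruction group, stage-by-stage vanishing does not by itself produce an element of $\varprojlim_n K_0(X_n)_\Q$: one must control the torsor of choices at each stage, which in the paper's framework is handled by continuous cohomology and its $\varprojlim^1$ term \eqref{hom.lim1}. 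Second, the precise link between the successive obstructions and the Hodge filtration on crystalline cohomology is exactly what the paper establishes through the motivic pro-complex $\Z_{X_\bull}(r)$ (Definition~\ref{Z(r)}), obtained by gluing the Suslin--Voevodsky complex on $X_1$ with the Fontaine--Messing--Kato syntomic complex, together with the fundamental triangle \eqref{intro.fundtri} (Theorem~\ref{syntF}, Proposition~\ref{diag}) and the computation of the connecting map (Theorem~\ref{comp}); this yields Theorem~\ref{def.thm}, the Chow-theoretic analogue of the assertion you want. Third, converting Theorem~\ref{def.thm} into a $K$-theoretic statement requires the Chern character isomorphism between continuous $K_0$ and continuous Chow groups (Theorem~\ref{ciso.isothm}), whose proof uses McCarthy's theorem and the \'etale descent and continuity results of Geisser--Hesselholt for topological cyclic homology, Kurihara's exponential for Milnor $K$-theory, and the hypothesis $p>d+6$, which enters for essential structural reasons rather than as a technicality. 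Rational Goodwillie alone gives relative negative cyclic homology but does not provide the integral, $p$-adic, and sheaf-level control that the argument needs. In short, your outline is a faithful description of the paper's strategy, but the distance from the outline to an actual proof of the formal half is most of Sections~\ref{criscoho}--\ref{cherniso}, and the conjecture itself --- requiring the algebraization step --- remains unproved.
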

In fact the conjecture can be stated more generally over any
$p$-adic complete discrete valuation ring with perfect residue field. 
Note that there is no analog of the absolute Hodge conjecture available over $p$-adic fields, which
would comprise the $p$-adic variational Hodge conjecture. So its origin is more mysterious
than the variational Hodge conjecture in characteristic zero.

Applications of Conjecture \ref{intro.padvarh} to modular forms are studied by Emerton and
Mazur, see \cite{Em}.

\medskip

We suggest to decompose the problem into two parts: firstly a formal deformation part and
secondly an algebraization part
\ga{}{
\xymatrix{  K_0(X) \ar[rr]  & &  \varprojlim_n K_0(X_n)  \ar@{~>>}@/^{0.8cm}/[ll]^{\rm
    algebraization} \ar[rr] & & K_0(X_1) \ar@{~>>}@/^{0.8cm}/[ll]^{\rm deformation} . } 
    \notag}

Unlike for $\Pic$, there is no general approach to the 
algebraization problem known. In this note, we study the deformation problem.  Our main
result, whose proof is finished in Section~\ref{cherniso}, states:

\begin{thm} \label{MainThm}
Let $k$ be a perfect field of characteristic $p>0$,  let $X/W$ be smooth projective scheme
over $W$ with closed fibre $X_1$. Assume  $p>d+6$, where $d=\dim(X_1)$. Then for $\xi_1\in
K_0(X_1)_{\Q}$ the following are equivalent
\begin{itemize} 
\item[(a)] we have
$$\Phi^{-1}\circ \ch (\xi_1)\:\in\: \bigoplus_{r} F^rH^{2r}_{\dR}(X/S),$$
\item[(b)]
there is  a $\hat{\xi}   \in \big( \varprojlim_n K_0(X_n)\big)_{\Q}$, such that $\hat{\xi}|_{X_1}=\xi_1 \in K_0(X_1)_{\Q}$.
\end{itemize}
\end{thm}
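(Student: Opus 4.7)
\medskip\noindent
\emph{Proof plan.}

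The direction (b) $\Rightarrow$ (a) should be the more formal of the two. Given a pro-lift $\hat\xi = (\xi_n) \in \varprojlim_n K_0(X_n)_{\Q}$, applying the crystalline Chern character termwise yields a compatible system $(\ch(\xi_n))_n$ in $\bigoplus_r H^{2r}_{\cris}(X_n/W_n)_K$. On each thickening $X_n$, Berthelot's comparison identifies $H^{2r}_{\cris}(X_n/W_n)_K$ with $H^{2r}_{\dR}(X_n/W_n)_K$, and the Chern character of a vector bundle, built from its Atiyah class, tautologically lands in $F^r$. Passing to the inverse limit and using a formal-GAGA-type identification $\varprojlim_n H^{2r}_{\dR}(X_n/W_n)_K \simeq H^{2r}_{\dR}(X/W)_K$ together with compatibility of Hodge filtrations, I would obtain a class in $F^r H^{2r}_{\dR}(X/W)_K$ whose image under $\Phi$ is $\ch(\xi_1)$, yielding~(a).

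For (a) $\Rightarrow$ (b), the natural strategy is deformation step by step: construct $\xi_n \in K_0(X_n)_{\Q}$ lifting $\xi_{n-1}$ recursively, or equivalently lift $\xi_1$ through the entire pro-system in one go after setting up the right obstruction theory. The key input would be a continuous Chern character from $\varprojlim_n K_0(X_n)_{\Q}$ to a $p$-adic cohomological target assembled from $F^\bullet H^*_{\dR}(X/W)_K$ and $H^*_{\cris}(X_1/W)_K$, playing the role of a $p$-adic syntomic or Deligne-type cohomology. In terms of such a Chern character one aims to show that the lifting obstruction for $\xi_1$ is precisely the image of $\Phi^{-1}\ch(\xi_1)$ in $\bigoplus_r H^{2r}_{\dR}(X/W)_K / F^r$, which vanishes exactly by hypothesis~(a).

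Concretely, the plan splits into two tasks: (i) identify the relative terms in the tower
\[
\cdots \to K_0(X_{n+1})_{\Q} \to K_0(X_n)_{\Q}
\]
rationally and $p$-adically, with computable Hodge-theoretic data, in the spirit of Goodwillie's comparison of rational relative $K$-theory with negative cyclic homology applied to the nilpotent ideals $(p^{n}/p^{n+1})$; and (ii) show that the Chern character from the resulting continuous $K$-theory to the $p$-adic cohomological target is an isomorphism on the Adams weight pieces with $0 \le r \le d$ relevant here. Step~(ii) is where the hypothesis $p > d+6$ is presumably used, via Fontaine--Messing-type bounds for syntomic cohomology and de Rham--Witt computations, ensuring that no small primes of denominator appear in the comparison isomorphisms in this weight range.

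The main obstacle will be task~(i): constructing a \emph{pro}-version of the rational $K$-theory/cyclic-homology comparison that is compatible with the $p$-adic filtration by $p^n$, and controlling the boundary maps in the resulting long exact sequence well enough to pinpoint the lifting obstruction as a class in the Hodge-filtration quotient $H^{2r}_{\dR}(X/W)_K/F^r$. The Adams-eigenspace decomposition, the Bott-tower structure of $K_0$, and the formal finiteness of crystalline cohomology of $X/W$ will all have to be used in concert; once these are in place, the logical backbone of the argument — a commutative diagram comparing the obstruction sequence for the $K_0$ pro-system with a cohomological obstruction sequence whose boundary map is $\Phi^{-1}\ch$ modulo $F^\bullet$ — should give the theorem.
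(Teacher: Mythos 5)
Your overall architecture — continuous $K$-theory of the tower $(X_n)$, an obstruction sequence, and a Chern character isomorphism onto a syntomic/Deligne-type cohomology in weights $\le d$ — matches the paper's decomposition of the problem. But there is a genuine gap in how you plan to control the relative terms, and a few further places where the sketch would not go through as written.

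The central error is in task~(i). You propose to compare rational relative $K$-theory with negative cyclic homology in the spirit of Goodwillie, applied to the nilpotent ideals $(p^n/p^{n+1})$. This cannot give the needed information: the relative groups $K_s(X_n,X_1)$ are $p$-primary torsion of bounded exponent (Proposition~\ref{topcyc.desc}(a) in the paper), so after tensoring with $\Q$ they vanish and Goodwillie's rational isomorphism is vacuous. The actual engine is McCarthy's theorem identifying \emph{integral} relative $K$-theory of a nilpotent extension with relative \emph{topological} cyclic homology, combined with the computations of Geisser--Hesselholt--Madsen relating $TC$ of smooth schemes over discrete valuation rings to $p$-adic vanishing cycles (\cite{GH1}, \cite{GH3}, \cite{GH4}, \cite{HM}), and with Kurihara's identification of the syntomic side with those vanishing cycles. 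The interesting cohomology is recovered from continuous $K_0$ only because, for a tower of torsion groups with bounded exponent, the $\varprojlim$/$\varprojlim^1$ contributions survive even though each level is rationally trivial; this whole mechanism is invisible to (rational) Goodwillie.

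There are two further mismatches worth flagging. First, your cohomological target is phrased as the Hodge quotient $H^{2r}_{\dR}/F^r$, but the paper instead builds a \emph{motivic pro-complex} $\Z_{X_\bull}(r)$ glued from the Suslin--Voevodsky complex of $X_1$ and the Fontaine--Messing--Kato syntomic complex, and the obstruction sheaf is the twisted truncated complex $p(r)\Omega^{<r}_{X_\bull}$ (a genuinely $p$-adically rescaled subcomplex, not merely $\Omega^{<r}$ or the filtration quotient). That rescaling, and the matching $q(r)W_\bull\Omega^\bulle$ on the de Rham--Witt side, is what makes the comparison between syntomic and crystalline/de Rham data work integrally (Theorem~\ref{def.thm}); the statement in (a) is recovered only after rationalizing at the end. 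Second, your (b)~$\Rightarrow$~(a) sketch via ``formal GAGA'' on Hodge filtrations glosses over the essential point that $\Phi$ does \emph{not} preserve the Hodge filtration; one really needs the compatibility of the connecting map in the fundamental triangle with the refined crystalline cycle class (Theorem~\ref{comp}), which is what feeds into the diagram chase proving Theorem~\ref{def.thm}. Finally, the paper constructs the Chern character via Gillet's universal Chern class method on $\BGL$ and compares it locally with Sato's characteristic classes, rather than via an Adams-weight decomposition.
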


Before we describe the methods we use in our proof, we make three remarks.
\begin{itemize}
\item[(i)]
We do not handle the case where the ground ring is $p$-adic complete and ramified over
$W$. The reason is that we use techniques related to integral $p$-adic Hodge theory, which
do not exist over ramified bases. In fact, Theorem \ref{MainThm} is not integral, but a
major intermediate result, Theorem \ref{def.thm}, is valid with integral coefficients and
this theorem would not hold integrally over ramified bases.     
\item[(ii)]
 The precise form of the condition  $p>d+6$ on the characteristic has technical
  reasons. However, the rough condition that $p$ is big
  relative to $d$ is essential for our method for the same reasons explained in (i) for
  working over the base $W$.
\item[(iii)]
Note, we literally  lift the $K_0(X_1)_{\Q}$ class to an element in $\big(\varprojlim_n
K_0(X_n)\big)_{\Q}$, not only its Chern character in crystalline cohomology.  One thus
should expect that in order to algebraize $\hat{\xi}$ and in order to obtain the required class over
$X$ in Conjecture \ref{intro.padvarh}, one might have to move it to
another pro-class with the same Chern character. 
\end{itemize}

\smallskip

We now describe our method. 
We first construct for $p>r$ in an ad hoc way a motivic pro-complex $\Z_{X_\bull}(r)$ of
the $p$-adic formal scheme $X_\bull$ associated to $X$ on the Nisnevich site of $X_1$. 
For this we glue the Suslin-Voeveodsky motivic complex on $X_1$ with the Fontaine-Messing-Kato syntomic  complex  on $X_\bull$, see 
Definition \ref{Z(r)}.
In Sections \ref{ftriangle} and \ref{motiviccomplex} we  construct a fundamental triangle 
\ga{intro.fundtri}{    
 p(r)\Omega^{< r}_{X_\bull}[-1]  \to  \Z_{X_\bull}(r) 
\to 
 \Z_{X_1}(r) \to \cdots
 }
which in weight $r=1$ specializes to  \eqref{intro.1} and \eqref{intro.2}.
Here $p(r)\Omega^{< r}_{X_\bull}$ is a subcomplex of the truncated de Rham complex of
$X_\bull$, which is isomorphic to it tensor $\Q$. 

A. Beilinson translated back the existence of the fundamental triangle
\eqref{intro.fundtri} to give a definition of  $\Z_{X_\bull}(r)$ in the style of the Deligne
cohomology complex in complex geometry, which does not refer to the syntomic complex.  We
show in Appendix \ref{sec.cryscon} that there is a canonical isomorphism between his
definition and ours. Even if his definition is very elegant and it seems that one can
develop the theory completely along these lines,  we kept our viewpoint in the article. On one hand, syntomic cohomology  as developed in 
\cite{Ka2} and \cite{FonMes}  is well established, on the other hand, we need Kato's results on it to show our main theorem.


In Section \ref{hodgeobstruction}  we
define continuous Chow groups as continuous cohomology of our motivic
pro-complex by the Bloch type formula  $$\CH_{\cont}^r (X_\bull)=H^{2r}_{\cont}(X_1, \Z_{X_\bull}(r)).$$  
{From} \eqref{intro.fundtri} we obtain the higher codimension analog of the obstruction sequence \eqref{intro.0} 
  \begin{equation}\label{intro.obseq}
\CH^r_\cont(X_\bull) \to \CH^r(X_1)  \xrightarrow{\rm Ob}
H^{2r}_\cont(X_1,p(r)\Omega^{<r}_{X_\bull}) .
\end{equation}
 In Sections \ref{conmorph} and \ref{hodgeobstruction}  we relate the obstruction map in \eqref{intro.obseq} to the Hodge theoretic properties of the cycle class in crystalline cohomology.
Using this we prove the analog, Theorem \ref{def.thm}, of our Main Theorem \ref{MainThm} with $\varprojlim_n K_0(X_n)$ replaced by
$\CH_{\cont}(X_\bull)$.

We then define continuous $K$-theory  $K^{\cont}_0(X_\bull)$  of the $p$-adic formal
scheme $X_\bull$ in Section \ref{chern}. 
The continuous $K_0$-group maps surjectively to $\varprojlim_n K_0(X_n)$, so lifting classes
in $K_0(X_1)$ to continous $K_0$ is equivalent to lifting classes as in Theorem
\ref{MainThm}.

  Using the method of Grothendieck and Gillet \cite{Gil} and relying on
ideas of Deligne for the calculation of cohomology of classifying spaces, we define a Chern character 
\ga{intro.6}{\ch: K^{\cont}_0(X_\bull)_{\Q}\to \bigoplus_{r\le d}\CH^r_\cont(X_\bull)_{\Q}.}
Finally, using deep results from topological cyclic homology theory due to
Geisser-Hesselholt-Madsen, recalled in Section \ref{topcyc}, we
show in  Theorem~\ref{ciso.isothm} that the Chern character~\ref{intro.6} is an
isomorphism for $p>d+6$ by reducing it to an \'etale
local problem with $\Z/p$-coefficients. We also get a Chern character isomorphism on higher $K$-theory in Theorem~\ref{higherK}.
In Section~\ref{cherniso} we complete the proof of Theorem~\ref{MainThm}.
\\[.3cm]

\noindent

{\it Acknowledgements:} It is our pleasure to thank Lars Hesselholt for explaining to us
topological cyclic homology and Marc Levine for many important comments.  We also thank  Markus Spitzweck and Chuck Weibel for helpful discussions. We are
grateful to the mathematicians from the Feza G\"ursey Institute in Istanbul for giving us
the opportunity to present a preliminary version of our results in March 2011.  After our work was completed, Alexander Beilinson proposed to us an alternative definition of our motivic complex (see Section~\ref{sec.cand}). We thank him for his interest in our work and for his contribution to it. We also thank the various referees who sent comments to us.

\section{Crystalline and de Rham cohomology} \label{criscoho}

\noindent
In this section we study the de Rham complex of a $p$-adic formal scheme $X_\bull$ and the de
Rham-Witt complex of its special fibre $X_1$. We also introduce certain subcomplexes,
which coincide with the usual de Rham and de Rham-Witt complex tensor $\Q$. These
subcomplexes play an important r\^ole in the obstruction theory of cohomological Chow groups as
studied in Section~\ref{hodgeobstruction}.
We will think of the de Rham complex of $X_\bull$ and the de Rham-Witt complex of $X_1$ as pro-systems on the small Nisnevich site of $X_1$.

To fix notation let $S$ be a complete adic noetherian ring. Fix an ideal of definition
$\mathcal I\subset S$. We write $S_n=S/\mathcal I^n$.  Let $\Sch_{S_\bull}$ be the
category of $\mathcal I$-adic formal schemes $X_\bull$ which are
quasi-projective over ${\rm Specf}(S)$ and such that $X_n = X_\bull \otimes_S S/\mathcal
I^n $ is syntomic  \cite{FonMes}
over $S_n=S/\mathcal I^n$ for all $n\ge 1$.
By $\Sm_{S_\bull}$ we denote the full subcategory of $\Sch_{S_\bull}$ of formal schemes
which are (formally) smooth over $S_\bull$.

In the following  let $S=W=W(k)$ be the ring of Witt vectors of a
perfect field $k$, $p=\Char\, k>0$ and fix the ideal of definition $ \mathcal I=(p)$.
Let $X_\bull$ be in $\Sch_{W_\bull}$.

\begin{defn}
For $\bS_\et$ resp.\ $\bS_\Nis$ the small \'etale resp.\ Nisnevich site of $X_1$, we write
\begin{align*}
\S_\pro(X_1)_\etNis   \quad &\text{ for } \quad \S_\pro(\bS_\etNis)\\
\Sh_\pro(X_1)_\etNis \quad &\text{ for } \quad \Sh_\pro(\bS_\etNis)\\
\C_\pro(X_1)_\etNis \quad &\text{ for } \quad \C_\pro(\bS_\etNis)\\
\D_\pro(X_1)_\etNis \quad &\text{ for } \quad \D_\pro(\bS_\etNis),
\end{align*}
where the right hand side is defined in generality in Appendix~\ref{sec.hom}
and~\ref{sec.hoto}.
If we do not specify topology we usually mean Nisnevich topology.
\end{defn}

Note that the \'etale (resp.\  Nisnevich) sites of $X_1$ and $X_n$ ($n\ge 1$) are isomorphic.

\begin{defn} \label{sheaves} \mbox{}
\begin{itemize}
\item[(a)]
We define
\ga{}{ \Omega^\bulle_{ X_\bull} \in \C_\st (X_1)_\etNis    }
as the pro-system of de Rham complexes $n\mapsto \Omega^\bulle_{X_n / W_n}$.
\item[(b)]
We define 
\ga{}{ W_\bull \Omega^\bulle_{ X_1} \in \C_\st (X_1)_\etNis    }
as the pro-system of de Rham-Witt complexes \cite{Il}.
\end{itemize}

\end{defn}

\begin{defn}\label{cris.wlog}
 We define 
\ga{}{W_\bull \Omega^r_{X_1,\log} \in \Sh_\st (X_1)_\etNis   \notag}
as pro-system of \'etale or Nisnevich subsheaves in $W_n\Omega^r_{X_1}$ which are locally
generated by symbols
$$d\log \{ [a_1], \ldots ,[ a_r]\},$$ with $a_1 , \ldots , a_r\in
\sO_{X_1}^\times$  local sections and where $[ - ]$ is
the Teichm\"uller lift (\cite{Il}, p. 505, formula (1.1.7)).  
\end{defn} 

Clearly $\epsilon^*\, W_n\Omega^r_{X,\Nis} =
W_n\Omega^r_{X,\et}$ and Kato  shows \cite{Ka}
\begin{prop}\label{kato.thm}
 The natural map
\begin{equation}
W_n\Omega^r_{X,\log,\Nis} \xrightarrow{\sim}   \epsilon_*\, W_n\Omega^r_{X,\log,\et}
\end{equation}
is an isomorphism, in other words $ \epsilon_*\, W_n\Omega^r_{X,log,\et}$ is Nisnevich
locally generated by symbols in the sense of Definition~\ref{cris.wlog}.

\end{prop}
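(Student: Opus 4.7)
The statement is Nisnevich-local on $X_1$, so I would reduce to checking that the map induces an isomorphism on Nisnevich stalks at a point $x$ of $X_1$. Write $A = \sO_{X_1,x}^h$ for the henselian local ring at $x$; the syntomic hypothesis on $X_n/W_n$ ensures $A$ is a well-behaved henselian local $\F_p$-algebra. Both stalks embed into the common ambient module $W_n\Omega^r_A$.

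Unwinding definitions, the stalk on the LHS is the subgroup $S \subset W_n\Omega^r_A$ generated by symbols $d\log[a_1]\wedge\cdots\wedge d\log[a_r]$ with $a_i \in A^\times$: a Nisnevich-local symbol on a neighborhood of $x$ becomes a global symbol on $\Spec A$ after passing to the filtered colimit, and the formation of the subgroup generated by symbols commutes with this colimit. On the RHS, the Nisnevich stalk of $\epsilon_*$ of an étale sheaf is computed as global sections over the étale site of $\Spec A$; hence the stalk is $T = H^0_\et(\Spec A, W_n\Omega^r_{\log,\et})$, the subgroup of those $\omega \in W_n\Omega^r_A$ which become sums of symbols after some étale cover of $\Spec A$.

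Injectivity $S \hookrightarrow T$ is tautological. For the reverse inclusion $T \subset S$, I would invoke the Bloch--Kato--Gabber symbol theorem (in the form extended to smooth henselian local $\F_p$-algebras by Geisser--Levine): the $d\log$ symbol map
$$d\log : K^M_r(A)/p^n \xrightarrow{\;\sim\;} H^0_\et(\Spec A, W_n\Omega^r_{\log})$$
is an isomorphism. Since $K^M_r(A)/p^n$ is by construction generated by Milnor symbols $\{a_1,\ldots,a_r\}$ with $a_i \in A^\times$, it follows that every element of $T$ is a sum of $d\log[a_1]\wedge\cdots\wedge d\log[a_r]$ with $a_i \in A^\times$, and therefore lies in $S$.

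The main obstacle is the symbol theorem itself; the site-theoretic identification of stalks on both sides is routine, but the surjectivity of the symbol map for smooth henselian local rings in characteristic $p$ is the substantive arithmetic input underlying the proposition, and is precisely the content that Kato's cited work \cite{Ka} provides.
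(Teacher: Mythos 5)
The paper gives no proof of this proposition at all; it is attributed directly to Kato \cite{Ka}, so there is no in-paper argument to compare with, and your sketch is a reasonable reconstruction. Your reduction to Nisnevich stalks at a henselian local ring $A$ is routine and correct, as is the identification of $S$ (the symbol-generated subgroup of $W_n\Omega^r_A$) and $T=H^0_\et(\Spec A, W_n\Omega^r_{\log,\et})$ and the observation that the whole content is $T\subset S$. One caution, however: the isomorphism you invoke,
\[
d\log : K^M_r(A)/p^n \xrightarrow{\;\sim\;} H^0_\et(\Spec A, W_n\Omega^r_{\log,\et}),
\]
with the \emph{\'etale} cohomology group on the right, is, once one already knows $K^M_r(A)/p^n \xrightarrow{\sim} S$ (Bloch--Kato--Gabber for fields together with the Gersten resolution for $\sK^M$ and for $W_n\Omega^r_{\log}$, as in Gros--Suwa and Kerz), logically equivalent to the proposition being proved. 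So phrased as an appeal to "the Bloch--Kato--Gabber theorem" it risks circularity: the standard form of BKG gives the identification with the Nisnevich-level subgroup $S$, and the upgrade to the \'etale $H^0$ is exactly the Galois-descent/\'etale-local-to-Nisnevich-local surjectivity that Kato's cited work (and Gros--Suwa) supplies via the Artin--Schreier-type exact sequence for $W_n\Omega^r_{\log,\et}$. You are honest about this in your final paragraph, but the citation should be to that specific ingredient rather than to BKG in its usual form. A second minor point: you need $X_1$ smooth over $k$, not merely syntomic, for the de Rham--Witt complex and the Gersten resolutions to apply; "well-behaved henselian local $\F_p$-algebra" should be made precise as "henselization of a local ring of a smooth $k$-scheme".
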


\medskip

\begin{defn}
For $r<p$ we define 
\ga{}{ p(r)\Omega^\bulle_{X_{\bull }}  \in \C_\pro (X_1)_\etNis
\notag }
as the de Rham complex
\ga{}{
p^{r}\sO_{X_\bull}
\to
p^{r-1}\Omega^1_{X_\bull}\to \ldots \to p\Omega^{r-1}_{
X_\bull}\to \Omega^r_{ X_\bull}\to \Omega^{r+1}_{X_\bull}\to \ldots
.\notag}

\medskip

\medskip

For $r<p$ we define 
\ga{}{ q(r) W_\bull \Omega^\bulle_{X_1}\in   \C_\pro (X_1)_\etNis  \notag}
as the de Rham--Witt complex
\ga{}{ 
p^{r-1}VW_\bull \sO_{X_1} \to
p^{r-2} VW_\bull \Omega^1_{X_1}  \to \ldots \notag \\ \to pVW_\bull\Omega^{r-2}_{X_1}\to
VW_\bull\Omega^{r-1}_{X_1}\to 
 W_\bull\Omega^{r}_{X_1}\to 
W_\bull\Omega^{r+1}_{X_1}\to \ldots  \notag}
here $V$ stands for the  Verschiebung homomorphism  (see
\cite[p.~505]{Il}).
\end{defn}

\medskip

\begin{rmk}
It is of course possible to define analogous complexes  $p(r) \Omega^\bulle_{ X_\bull}$ and  $ q(r)
W_\bull\Omega^\bulle_{X_1}$ in case $r\ge p$ by introducing divided powers \cite{FonMes}. Unfortunately, doing so introduces a number of problems both with regard to syntomic cohomology and later in Section~\ref{topcyc}, so we have chosen to assume $r<p$ throughout. 
\end{rmk}

\medskip

In the rest of this section we explain the construction of
canonical isomorphisms
\begin{align}
\label{2.4a} \Omega^\bulle_{X_\bull}  \simeq W_\bull \Omega_{X_1}^\bulle &\quad
\text{ in } \quad \D_\pro (X_1 ) \\ 
\label{2.5a} p(r) \Omega^\bulle_{X_\bull}  \simeq q(r) W_\bull \Omega_{X_1}^\bulle &\quad
\text{ in } \quad \D_\pro (X_1 ) .
\end{align}

\medskip

Recall the following construction, see \cite[Sec.~II.1]{Il}, \cite[Section~1]{Ka2}.
For the moment we let $X_\bull$ be a not necessarily smooth object in $\Sch_{W_\bull}$. 
We fix a closed embedding $ X_\bull \to Z_\bull$, where $Z_\bull/W_\bull$ in
$\Sm_{W_\bull}$ is endowed with a lifting
$F:Z_\bull\to Z_\bull$ over
$F: W_\bull \to W_\bull $ of Frobenius on $Z_1$. One defines the PD envelop $ X_n \to
D_n=D_{X_n}(Z_n)$. Recall that $D_n$ is endowed with a de Rham
complex $\Omega^\bulle_{D_n/W_n}:=\sO_{D_n}\otimes_{\sO_{Z_n}}
\Omega^\bulle_{Z_n/W_n}$ satisfying
$d\gamma^n(x)=\gamma^{n-1}(x)\, dx$ where $n!\cdot\gamma^n(x)=x^n.$
We define $J_n$ to be the ideal of $X_n\subset D_n$ and $I_n=(J_n,
p)$ to be the ideal sheaf of $X_1\subset D_n$. Then $J_n$ and $I_n$
are nilpotent sheaves on $X_{1,\et}$ with divided powers
$J_n^{[j]}$ and $I_n^{[j]}$. If $j<p$ one has $J_n^{[j]}=J_n^j$ and $  I_n^{[j]}=I_n^j$.

As before the \'etale (resp.\ Nisnevich) sites of $X_1$ and $D_n$ ($n\ge 1$) are isomorphic. 
In the following by abuse of notation we identify these equivalent sites.

\medskip

We continue to assume $r<p$. 
\begin{defn}(see \cite[p.211]{Ka2})
 One defines $J(r)\Omega^\bulle_{D_{\bull}} \in \C_\pro(D_\bull)_\etNis$
as the complex
\[
J^r_\bull \to  
J^{(r-1)}_\bull  \otimes_{\sO_{Z_\bull}} \Omega^1_{Z_{\bull}} 
\to
\ldots \to J_\bull \otimes_{\sO_{Z_\bull}}   \Omega^{r-1}_{Z_{\bull}}\to 
\sO_{D_\bull} \otimes _{\sO_{Z_\bull}} \Omega^{r}_{Z_{\bull}}  \to
\ldots.
\]
One defines $I(r)\Omega^\bulle_{D_\bull} \in \C_\pro( D_\bull )_\etNis$
as the complex
\[
I^r_\bull\to  
I^{(r-1)}_\bull   \otimes_{\sO_{Z_\bull}} \Omega^1_{Z_{\bull}} 
\to
\ldots \to I_\bull \otimes_{\sO_{Z_\bull}}   \Omega^{r-1}_{Z_{\bull}}\to 
\sO_{D_\bull}\otimes_{\sO_{Z_\bull}} \Omega^{r}_{Z_{\bull}}  \to
\ldots .
\]

\end{defn}

\medskip
For the rest of this section we assume $X_\bull$ is in $\Sm_{W_\bull}$.
The lifting of Frobenius $F$ defines a morphism \ga{}{ \sO_{D_n}\to
  \prod_1^{n}\sO_{D_n}, \ x\mapsto (x, F(x), \ldots,
  F^{n-1}(x)),\notag} which induces a well defined morphism $\Phi(F):
\sO_{D_n}\to W_n\sO_{X_1}$, which in turn induces a quasi-isomorphism
of differential graded algebras  \cite[Sec.~II.1]{Il}
\begin{equation}
\Phi(F): \Omega^\bulle_{D_n}\to
W_n\Omega^\bulle_{X_1} .
\end{equation}
The restriction
homomomorphisms
\begin{align}
\Omega^\bulle_{D_n}  &\xrightarrow{\sim} \Omega^\bulle_{X_n}\label{2.7a}  \\
J(r) \Omega^\bulle_{D_n}  &\xrightarrow{\sim} \Omega^{\ge r}_{X_n} \label{2.8a} \\
\label{cris.iso} I(r) \Omega^\bulle_{D_n}  &\xrightarrow{\sim} p(r) \Omega^\bulle_{X_n}
\end{align}
 are quasi-isomorphisms of differential graded algebras \cite[7.26.3]{BO}.
We get isomorphisms
\ga{cris.diaco}{
\xymatrix{\Omega^\bulle_{X_\bull} \ar@{..}[rd]_{(*)}  & \ar[l]_-{\sim} \ar[d]^-{\Phi(F)}_-{\wr}
 \Omega^\bulle_{D_\bull} \\
& W_\bull\Omega^\bulle_{X_1}
}}
which induce a canonical dotted isomorphism $(*)$ in $\D_\pro(X_1)_\etNis$, independent of the choice of $Z$.

\begin{prop} \label{qis}
For $X_\bull \in \Sm_{W_\bull}$
the diagram \eqref{cris.diaco} induces the diagram
\ga{}{\xymatrix{p(r)\Omega^\bulle_{X_\bull}  \ar@{..}[rd]_{(*)}  & \ar[l]_-{\sim}
\ar[d]^-{\Phi(F)}_-{\wr}
I(r) \Omega^\bulle_{D_\bull} \\
& q(r)W_\bull \Omega^\bulle_{X_1}
 \notag
}
}
whose maps are isomorphisms in $\D_\pro(X_1)_\etNis$. They induce a canonical isomorphism $(*)$,
independent of the choice of $Z$.
\end{prop}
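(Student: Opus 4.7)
The right-pointing arrow is the quasi-isomorphism \eqref{cris.iso} of Berthelot--Ogus, so the work lies entirely in the vertical $\Phi(F)$. I would first verify that $\Phi(F)$ sends $I(r)\Omega^\bulle_{D_\bull}$ into $q(r)W_\bull\Omega^\bulle_{X_1}$, and then that the resulting morphism of subcomplexes is a pro-quasi-isomorphism in $\D_\pro(X_1)_\etNis$.

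For well-definedness, I would use that $\Phi(F)$ is a homomorphism of $W$-dg-algebras whose composite with the natural projection $W_n\sO_{X_1}\twoheadrightarrow\sO_{X_1}$ is the restriction $\sO_{D_n}\to\sO_{X_1}$. Hence the ideal $I_n=(J_n,p)$ of $X_1\subset D_n$ maps into $\ker(W_n\sO_{X_1}\twoheadrightarrow\sO_{X_1})=VW_{n-1}\sO_{X_1}$; in particular $p=\Phi(F)(p)=V(1)$. The Verschiebung multiplicativity rule $(Vx)(Vy)=pV(xy)$ then gives inductively
\[
\Phi(F)(I_n^j)\;\subset\; p^{j-1}VW_{n-1}\sO_{X_1},
\]
so that the degree-$i$ term $I_n^{r-i}\otimes_{\sO_{Z_n}}\Omega^i_{Z_n}$ of $I(r)$ lands in $p^{r-i-1}VW_n\Omega^i_{X_1}$, which is the degree-$i$ term of $q(r)$ for $i\le r-1$ (and the cases $i\ge r$ are automatic). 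Compatibility with the differential uses the Illusie identity $Vd=p\,dV$ to convert one factor of $p$ into a Verschiebung when passing between successive filtration steps.

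For the quasi-isomorphism, I would consider the short exact sequences of pro-complexes
\[
0\to I(r)\Omega^\bulle_{D_\bull}\to\Omega^\bulle_{D_\bull}\to Q^I\to 0,\qquad 0\to q(r)W_\bull\Omega^\bulle_{X_1}\to W_\bull\Omega^\bulle_{X_1}\to Q^q\to 0,
\]
and apply $\Phi(F)$ to obtain a morphism of exact triangles in $\D_\pro(X_1)_\etNis$. Since the middle vertical is a pro-qis by \eqref{2.4a}, the two-out-of-three property reduces the claim to the pro-qis-ness of the induced map $Q^I\to Q^q$. I would prove this by filtering $Q^I$ by the residual $I_n$-adic filtration and $Q^q$ by the $V$-filtration combined with powers of $p$; the computation above shows that $\Phi(F)$ is strict for these filtrations, and the induced map on associated gradeds is a standard Cartier-type identification in Illusie's framework \cite[Sec.~II.1]{Il}. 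The dotted arrow $(*)$ is then defined as the composite of the two inverted quasi-isomorphisms, and its independence of $Z$ is inherited from the analogous property of the unrestricted $\Phi(F)$: the standard zig-zag through $Z_\bull\times_{W_\bull}Z'_\bull$ preserves the ideals $I_n$, and hence the subcomplex $I(r)$.

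The main technical obstacle I anticipate is the identification of the associated gradeds on the quotient pro-complexes. This is where the hypothesis $r<p$ is essential (to keep ordinary powers of $I_n$ in place of divided powers), and where the pro-structure is genuinely used, since at each finite level the two filtrations can differ by bounded $p$-power torsion that only becomes invisible after passing to pro-systems.
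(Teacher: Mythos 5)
Your proposal is essentially the paper's argument seen through the lens of two-out-of-three rather than a direct induction, and it would work. The paper proceeds by reducing to the affine case $X_\bull=Z_\bull=D_\bull$, introducing the family of subcomplexes $q(\nu_*)W_\bull\Omega^\bulle_{X_1}$ indexed by decreasing staircase sequences $\nu_*$ (the target complex $q(r)$ corresponds to $\nu_i=\max(0,r-i)$), and inducting on $N=\sum\nu_i$; the base case $N=0$ is Illusie's pro-quasi-isomorphism $\Omega^\bulle_{X_\bull}\simeq W_\bull\Omega^\bulle_{X_1}$, and the inductive step compares the quotients \eqref{4.8} and \eqref{4.9}. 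Your reduction to a comparison of the quotient pro-complexes $Q^I\to Q^q$, followed by a filtration, is the same computation: the filtration you should use on $Q^q$ is the one whose successive graded pieces are exactly the complexes \eqref{4.9}, i.e.\ the paper's staircase quotients $q(\mu_*)/q(\nu_*)$, and the identification of these with the truncated de Rham complex of $X_1$ in \eqref{4.8} is not a ``standard Cartier-type identification'' in the abstract — it is precisely \cite[Cor.~I.3.20]{Il}, which you should cite. Two smaller points: the ``strictness'' of $\Phi(F)$ for the filtrations is not needed (once the filtrations are finite and $\Phi(F)$ is filtered, a graded quasi-isomorphism already gives a quasi-isomorphism), and the well-definedness of the restricted $\Phi(F)$ — which you verify carefully via the rule $(Vx)(Vy)=pV(xy)$ — is taken as implicit in the paper, so it is good that you spelled it out.
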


\begin{proof}
We have to show that $\Phi(F)$ is an isomorphism in $\D_\pro(X_1)_\etNis$.
By \eqref{cris.iso} we can without loss of generality   assume $X_\bull=Z_\bull=D_\bull$
are affine  with Frobenius lift $F$.
Let $d= \dim\, X_1 $. Consider sequences $\nu_*:=\nu_0\ge \nu_1\ge\cdots\ge
\nu_d \ge \nu_{d+1}\ge 0$
with $\nu_{i+1}\ge \nu_i-1$ and $\nu_i<p$ for all $0\le i\le d$. We also assume
$\nu_{d+1}=\max (0,\nu_d-1)$. To any such sequence we associate a subcomplex
$q(\nu_*)W_\bull \Omega^\bulle_{X_1}$ of $W_\bull\Omega^\bulle_{X_1}$  as follows:
\eq{4.5}{q(\nu_*)W_\bull\Omega^i_{X_1} = \begin{cases} p^{\nu_i}W_\bull\Omega^i_{X_1} & \text{ for }\;\;
\nu_i=\nu_{i+1}
\\
p^{\nu_{i+1}}VW_\bull\Omega^i_{X_1} & \text{ for }\;\; \nu_i = \nu_{i+1}+1
\end{cases}
}
This is indeed a subcomplex (because $VW_\bull\Omega^i_{X_1} \supset
pW_\bull \Omega^i_{X_1}$). 
correspond to the
sequence $\nu_i=\max (0,r-i)$.  We get a map
\eq{4.6}{\Phi(F):   p^{\nu_\bulle}\Omega^\bulle_{X_\bull}  \to q(\nu_*)W_\bull
  \Omega^\bulle_{X_1} .
}

\begin{lem} 
The map $\Phi(F)$ in \eqref{4.6} induces an isomorphism in $\D_\pro(X_1)_\etNis$.  
\end{lem}
We proceed by induction on $N=\sum \nu_i$. If $N=0$ the assertion
is that $\Omega^\bulle_{A_\bull} \to W\Omega_{A_1}$ is a quasi-isomorphism, which
is Illusie's
result \cite[Thm.~II.1.4]{Il}. Suppose $N>0$ and
assume the result for smaller values of $N$. Let
$i$ be such that $\nu_0=\cdots = \nu_i >\nu_{i+1}$. Define a sequence $\mu_*$
such that $\mu_j=\nu_j$ for $j\ge i+1$ and such that $\mu_j=\nu_j-1$ for $j\le i$.  By induction
$p^{\mu_\bulle}\Omega^\bulle_{X_\bull} \to q(\mu_*)W_\bull\Omega^\bulle_{X_1}$ is an
isomorphism in $\D_\pro(X_1)_\etNis$. One has, up to
isomorphism 
\ga{4.8}{p^{\mu_\bulle}\Omega^\bulle_{X_\bull}/p^{\nu_\bulle}\Omega^\bulle_{X_\bull} \cong
\mathcal{O}_{X_1}\to \cdots \to
\Omega^i_{X_1} \\
\label{4.9} q(\mu_*)W_\bull \Omega^\bulle_{X_1}/q(\nu_*)W_\bull \Omega^\bulle_{X_1}
\cong \\
W(X_1)/pW(X_1) \to \cdots \to W_\bull\Omega^{i-1}_{X_1}/pW_\bull \Omega^{i-1}_{X_1}\to 
W_\bull\Omega^{i}_{X_1}/VW_\bull\Omega^{i}_{X_1}
\notag
}
Complexes \eqref{4.8} and \eqref{4.9} are quasi-isomorphic by
\cite[Cor.~I.3.20]{Il},
 proving the lemma. Note we are using throughout that
multiplication by $p$ is a monomorphism on  $W_\bull\Omega^\bulle_{X_1}$. 
\end{proof}

\medskip

For $X_1 /k$ projective we work with the {\it crystalline cohomology} groups
\begin{equation}
H_{\cris}^i(X_1/W) = H^i_\cont( X_1, W_\bull
\Omega^\bulle_{X_1} )
\end{equation}
 and the {\it refined crystalline cohomology} groups 
 $H^i_\cont( X_1,q(r) W_\bull \Omega^\bulle_{X_1} )$.
The definition of continuous cohomology groups is recalled in Definition~\ref{hoto.contcoho}.
 Note that because $H^i( X_1,
 W_n \Omega^r_{X_1} )$ are $W_n$-modules of finite type, we have 
\begin{align*}
H^i_\cont( X_1, W_\bull
\Omega^\bulle_{X_1} ) &= \varprojlim_n H^i( X_1, W_n
\Omega^\bulle_{X_1} )\\
H^i_\cont( X_1, q(r) W_\bull
\Omega^\bulle_{X_1} ) &= \varprojlim_n H^i( X_1,q(r) W_n
\Omega^\bulle_{X_1} ).
\end{align*}
For the same reason we have for de Rham cohomology
\begin{align*}
H^i_\cont( X_1, 
\Omega^\bulle_{X_\bull} ) &= \varprojlim_n H^i( X_1, 
\Omega^\bulle_{X_n} )\\
H^i_\cont( X_1, 
p(r)\Omega^\bulle_{X_\bull} ) &= \varprojlim_n H^i( X_1, 
p(r)\Omega^\bulle_{X_n} ).
\end{align*}
In particular if $X_\bull $ is the $p$-adic formal scheme associated to a smooth
projective scheme $X/W$ we get $H^i_\cont( X_1, 
\Omega^\bulle_{X_\bull} ) = H^i(X, \Omega_{X/W}^\bulle) $ by \cite[Sec.\ 4.1]{EGA3}.

\medskip

Gros \cite{G} constructs the crystalline Chern character
\begin{equation}\label{cris.chernch}
K_0(X_1)  \xrightarrow{\ch} \bigoplus_{r} H^{2r}_\cris (X_1/W)_\Q
\end{equation}
using the method of Grothendieck, i.e.\ using the projective bundle formula.  
The crystalline Chern character is a ring  homomorphism.

\section{A Candidate for the Motivic Complex, after A. Beilinson} \label{sec.cand}

\noindent We continue to assume $X_\bull$ is a smooth, projective formal scheme over $S=\text{Spf}(W(k))$, and we write $X_n=X_\bull \times_S \Spec(W/p^nW)$. In particular, $X_1$ is the closed fibre. The main goal of this paper is to relate the continuous $K_0(X_\bull)$ to the cohomology (in the Nisnevich topology) of suitable motivic complexes $\Z_{X_\bull}(r)$.
In this section we introduce briefly the referee's candidate for  $\Z_{X_\bull}(r)$. We work in the Nisnevich topology on $X_1$. Let $\Z_{X_1}(r)$ be the motivic complex in the Nisnevich topology on $X_1$. (For details see section \ref{motiviccomplex}.) The motivic complex on $X_1$ is linked to crystalline cohomology by a $d\log$ map  (compare \eqref{7.4a} and definition \ref{cris.wlog})
\eq{3.1a}{\Z_{X_1}(r) \to \sK^M_{X_1,r}[-r] \to W_\bull\Omega^r_{X_1,\log}[-r] \inj q(r)W_\bull\Omega^\bulle_{X_1}
}
The Chow group $CH^r(X_1) \cong H^{2r}(X_1, \Z_{X_1}(r))$ and the crystalline cycle class
$CH^r(X_1) \to H^{2r}(X_1, q(r)W_\bull \Omega^\bulle_{X_1})\to H^{2r}_{crys}(X_1/W) $ is the map on cohomology from \eqref{3.1a}. On the other hand, $H^{2r}(X_1, q(r)W_\bull\Omega^\bulle_{X_1})\cong H^{2r}(X_\bull, p(r)\Omega^\bulle_{X_\bull})$ (proposition \ref{qis}), and the Hodge obstruction to the cycle on $X_1$ lifting to $X_\bull$ is the composition
$$CH^r(X_1) \to H^{2r}(X_\bull, p(r)\Omega^\bulle_{X_\bull}) \to H^{2r}(X_\bull, p(r)\Omega^{\le r-1}_{X_\bull}).
$$
Thus, to measure the Hodge obstruction, it is natural to look for the cohomology of some sort of cone 
\eq{3.2a}{H^*(X_1,\Z_{X_1}(r) \xrightarrow{?} p(r)\Omega^{\le r-1}_{X_\bull})
}
analogous to the cone $H^*(X, \Z_X(r) \to \Omega^{\le r-1}_X)$ defining Deligne cohomology in characteristic $0$. Unfortunately, the arrow $?$ in \eqref{3.2a} is only defined in the derived category, so the cohomology is only given up to non-canonical isomorphism. To remedy this, we consider a more elaborate cone. We choose a divided power envelope $X_\bull \inj D_\bull$ as in section \ref{criscoho}.  Let $I_\bull \subset \sO_{D_\bull}$ be the ideal of $X_1$ and consider the cone
\ml{3.3a}{\widetilde\Z_{X_\bull}(r):= {\rm Cone}\Big(I(r)\Omega^\bulle_{D_\bull}\oplus \Omega^{\ge r}_{X_\bull}\oplus \Z_{X_1}(r) \xrightarrow{\phi}\\  p(r)\Omega^\bulle_{X_\bull}\oplus q(r)W\Omega^\bulle_{X_1}\Big)[-1].
}
Thinking of $\phi = (\phi_{ij})$ as a $2$ by $3$ matrix operating on the left on the domain viewed as a column vector with $3$ entries, we have $\phi_{1,1}: I(r)\Omega^\bulle_{D_\bull} \to p(r)\Omega^\bulle_{X_\bull}$ and $\phi_{2,1}:I(r)\Omega^\bulle_{D_\bull} \to q(r)W\Omega^\bulle_{X_1}$ defined as in proposition \ref{qis}. The map $\phi_{1,2}:\Omega^{\ge r}_{X_\bull}\inj p(r)\Omega^\bulle_{X_\bull}$ is the natural inclusion, and $\phi_{2,3}: \Z_{X_1}(r) \to q(r)W\Omega^\bulle_{X_1}$ is \eqref{3.1a}. The other entries of $\phi$ are zero. We will show in appendix \ref{sec.cryscon} that $\widetilde\Z_{X_\bull}(r) \simeq \Z_{X_\bull}(r)$ where the complex on the right is given in definition \ref{Z(r)}. By crystalline theory, a different choice $E_\bull$ of divided power envelope yields a canonical quasi-isomorphism $\Omega^\bulle_{D_\bull}\simeq \Omega^\bulle_{E_\bull}$ in the derived category, and hence the object $\widetilde\Z_{X_\bull}(r)$ is canonically defined in the derived category.

\section{Syntomic complex and de Rham-Witt sheaves}\label{sec.synt}

\noindent
We introduce the syntomic complex \cite{Ka2} in the \'etale and Nisnevich topologies and collect some facts
about de Rham-Witt sheaves.

Let $X_\bull$ be in $\Sch_{W_\bull}$ and let $X_\bull \hookrightarrow D_\bull$ be as in Section~\ref{criscoho}.
Assume $r<p$. 
Then the morphism $\Omega^\bulle_{D_n}
\xrightarrow{p^r} \Omega^\bulle_{D_{n+r}}$ of complexes of sheaves on 
$X_{1,\et}$ is injective, and the Frobenius map
\[
 J(r)\Omega^\bulle_{D_{n+r}}  \xrightarrow{F}
\Omega^\bulle_{D_{n+r}} 
\]
 factors through 
$\Omega^\bulle_{D_n}
\xrightarrow{p^r} \Omega^\bulle_{D_{n+r}}$, see  \cite[Section~1]{Ka2}.

\medskip

\begin{defn} \label{frJ} (\cite[Cor.1.5]{Ka2}) 
One defines the morphism  $$f_r: J(r)\Omega^\bulle_{D_\bull} \to
\Omega^\bulle_{D_\bull}$$
of complexes in $\Sh_\pro(X_{1})_\et$
via the 
factorization
\ga{}{ F:   J(r)\Omega^\bulle_{D_{n+r}}   
\to 
J(r)\Omega^\bulle_{D_n}    \xrightarrow{f_r}  
\Omega^\bulle_{D_n}  \xrightarrow{p^r}
\Omega^\bulle_{D_{n+r}}
\notag}
of the Frobenius $F$. 

\end{defn}

Note that $f_r$ is defined using the existence of $X_{n+r}$, not directly on
$X_n$.

\begin{defn}(\cite[Defn.~1.6]{Ka2}) \label{S}
We define the {\it syntomic complex} $\frak{S}_{X_{ \bull}} (r)_\et $ in the \'etale topology 
by 
\ga{}{ \frak{S}_{X_{\bull}}(r)_\et = \cone \big(
  J(r)\Omega^\bulle_{D_\bull}\xrightarrow{1-f_r}
  \Omega^\bulle_{D_\bull} \big) [-1] , \notag } 
which we usually
consider as an object in $\D_\pro(X_1 )_\et$.

In the Nisnevich topology
we define $\frak{S}_{X_{\bull}}(r) \in \D_\pro(X_1)_\Nis$ to be 
 \ga{}{
 \frak{S}_{X_\bull}(r) =\tau_{\le r} R\epsilon_* \frak{S}_{X_{\bull}}(r)_\et \notag .}
\end{defn}

\smallskip

Here  $\epsilon: X_{1,\et} \to X_{1,\Nis}$ is the morphism of sites and $\tau_{\le
  r}$ is the `good' truncation.
This definition does not depend on the choices $(Z,F)$, see comment after
\cite[Defn.~1.6]{Ka2}.

It is well known, see \cite[Thm.~3.6(1)]{Ka2}, that \[\epsilon^* \, \frak{S}_{X_{\bull}}(r) = \frak{S}_{X_{\bull}}(r)_\et.\]

For the rest of this section let $X_1$ be a smooth quasi-projective scheme over $k$ and
let $p,r\in \N$ be arbitrary.
Recall from \cite[Prop.~I.3.3,~(3.3.1)]{Il} that the internal Frobenius 
$W_{n+1}\Omega^r_{X_1} \xrightarrow{F} W_n\Omega^r_{X_1}$ induces a well defined homomorphism
$$F_r: W_n\Omega^r_{X_1}\to W_n\Omega^r_{X_1}/dV^{n-1}\Omega^{r-1}_{X_1}$$
by first lifting local sections of $W_n\Omega^r_{X_1}$ to
$W_{n+1}\Omega^r_{X_1}$ and then applying $F$ to it. 
Furthermore, by definition of $f_r$, one has a commutative diagram in $\Sh_\pro(X_1)$
\ga{}{\xymatrix{\ar[d]_{\Phi(F)}
J(r)\Omega^r_{D_{\bull}}   \ar[r]^-{f_r}&
\Omega^r_{D_{\bull}} \ar[d]^{\Phi(F)}\\
 W_\bull\Omega^r_{X_1}\ar[r]^-{F_r}&
W_\bull\Omega^r_{X_1}/dV^{n-1}\Omega^r_{X_1}
}\notag
}

\begin{lem} \label{Wlog}
 One has  a short exact
sequence  
\ga{}{0\to W_n\Omega^r_{X_{1,\log}}\to
W_n\Omega^r_{X_1}/dVW_{n-1}\Omega^{r-1}_{X_1}
\xrightarrow{1-F_r} 
W_n\Omega^r_{X_1}/dW_n \Omega^{r-1}_{X_1} \to 0  \notag}
on  $X_{1,\et}$. On $X_{1,\Nis}$ the sequence is still exact on the left and in the middle.
\end{lem}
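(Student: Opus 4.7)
The plan is to derive the \'etale short exact sequence from Illusie-Raynaud theory of logarithmic de Rham-Witt sheaves, and then deduce the Nisnevich statement by pushforward along $\epsilon$.

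First I would verify that $1 - F_r$ descends to a well-defined morphism between the stated quotient sheaves. Using the commutative square in the text relating $F_r$ to $f_r$ on $J(r)\Omega^\bulle_{D_\bull}$ together with the standard de Rham-Witt relations (in particular $FdV = d$ and the compatibilities of $F$ with $d$) from \cite[I.3.3]{Il}, one checks that $F_r$ sends $dVW_{n-1}\Omega^{r-1}_{X_1}$ into $dW_n\Omega^{r-1}_{X_1}$, so $1-F_r$ indeed induces a morphism between the indicated quotients.

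Next, on the \'etale site, the desired short exact sequence
$$0 \to W_n\Omega^r_{X_1,\log} \to W_n\Omega^r_{X_1}/dVW_{n-1}\Omega^{r-1}_{X_1} \xrightarrow{1-F_r} W_n\Omega^r_{X_1}/dW_n\Omega^{r-1}_{X_1} \to 0$$
is a variant of Illusie's description of $W_n\Omega^r_{X_1,\log}$ as the kernel of $1-F$ on the de Rham-Witt complex (see \cite{Il}, and Illusie-Raynaud). Injectivity of the first map and the identification $\ker(1-F_r) = W_n\Omega^r_{X_1,\log}$ I would reduce by induction on $n$, using the short exact sequences built from $V \colon W_{n-1}\Omega^r_{X_1} \to W_n\Omega^r_{X_1}$ and the restriction $R \colon W_n\Omega^r_{X_1} \to W_{n-1}\Omega^r_{X_1}$, to the base case $n=1$. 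In that case, the statement amounts to the classical identification $\Omega^r_{X_1,\log} = \ker(1 - C^{-1})$ inside $\Omega^r_{X_1}/d\Omega^{r-1}_{X_1}$ coming from the Cartier isomorphism. The main obstacle, as usual, is \'etale surjectivity of $1 - F_r$: this comes down \'etale-locally to solvability of Artin-Schreier type equations $u^p - u = f$ in $\sO_{X_1}$, which is propagated to arbitrary $n$ by the same $V$-filtration induction.

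For the Nisnevich statement, apply $\epsilon_*$ to the \'etale exact sequence. The sheaves $W_n\Omega^r_{X_1}$ and its quotients by $dV W_{n-1}\Omega^{r-1}_{X_1}$ and by $dW_n\Omega^{r-1}_{X_1}$ are quasi-coherent as sheaves of $W_n(\sO_{X_1})$-modules on $X_{1,\et}$, so their \'etale pushforwards are the corresponding Nisnevich sheaves; meanwhile Proposition~\ref{kato.thm} identifies $\epsilon_* W_n\Omega^r_{X_1,\log,\et}$ with $W_n\Omega^r_{X_1,\log,\Nis}$. Left exactness of $\epsilon_*$ then immediately yields exactness on the left and in the middle of the Nisnevich sequence. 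Nisnevich surjectivity on the right generally fails because $R^1\epsilon_*$ of the logarithmic de Rham-Witt sheaf is nontrivial, reflecting the presence of Artin-Schreier covers invisible to the Nisnevich topology.
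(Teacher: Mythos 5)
Your Nisnevich step matches the paper's: pushing forward the étale sequence with $\epsilon_*$, using coherence for the non-log terms and Proposition~\ref{kato.thm} for the log term, then invoking left-exactness of $\epsilon_*$.

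The étale step, however, has a genuine gap. You describe the desired sequence as ``a variant of Illusie's description of $W_n\Omega^r_{X_1,\log}$ as the kernel of $1-F$'' and then hand off to an induction on $n$. But the standard result in the literature (the paper cites \cite[Lem.~1.2]{CTSS}) is the sequence
$0 \to W_n\Omega^r_{X_1,\log} \to W_n\Omega^r_{X_1} \xrightarrow{1-F_r} W_n\Omega^r_{X_1}/dV^{n-1}\Omega^{r-1}_{X_1} \to 0$,
in which the middle term is $W_n\Omega^r_{X_1}$ itself and the target is only quotiented by $dV^{n-1}\Omega^{r-1}_{X_1}$, not by $dW_n\Omega^{r-1}_{X_1}$. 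Passing from that to the lemma's sequence --- where the middle term is further divided by $dVW_{n-1}\Omega^{r-1}_{X_1}$ and the target by all of $dW_n\Omega^{r-1}_{X_1}$ --- is exactly the content to be proven, and it is not a formal consequence of the standard statement. The paper handles this by fitting the two sequences into a commutative diagram with exact columns, observing that the top row is exact precisely when the induced map
$\phi = 1-F_r \colon dVW_{n-1}\Omega^{r-1}_{X_1} \to dW_n\Omega^{r-1}_{X_1}/dV^{n-1}\Omega^{r-1}_{X_1}$
is an isomorphism, and then inverting $\phi$ explicitly via the operator $\psi$ (with $V = p\psi$ on $dW_n\Omega^{r-1}$) and the geometric series $\psi + \psi^2 + \cdots$. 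Your proposed induction on $n$ via $V$ and $R$ never confronts this comparison of quotients, yet this is where the entire difficulty of the lemma lives; without it, the claim that your ``variant'' follows from the standard kernel description is unjustified. You should either run the paper's diagram argument or, if you insist on induction on $n$, spell out how the quotients by $dVW_{n-1}\Omega^{r-1}$ and $dW_n\Omega^{r-1}$ behave across the $V$-filtration --- which in effect reproduces the $\phi$-isomorphism step.
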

\begin{proof}
Consider first the situation in the \'etale topology. One has a commutative diagram with exact columns 
\begin{small}
\[
\xymatrix{0 \ar[r] &
 W_n\Omega^r_{X_{1,\log}}  \ar[r]  &
W_n\Omega^r_{X_1}/dVW_{n-1}\Omega^{r-1}_{X_1}
\ar[r]^-{ 1-F_r}  &
W_n\Omega^r_{X_1}/dW_n \Omega^{r-1}_{X_1} \ar[r] & 0 \\
0 \ar[r]  &
 W_n\Omega^r_{X_{1,\log}}  \ar[r] \ar@{=}[u] &
W_n\Omega^r_{X_1} 
\ar[r]^-{1-F_r}  \ar@{->>}[u]   &
W_n\Omega^r_{X_1}/dV^{n-1}\Omega^{r-1}_{X_1}\ar@{->>}[u] \ar[r] & 0  \\
& & dVW_{n-1}\Omega^{r-1}_{X_1} \ar[r]^-{\phi:=1-F_r}  \ar@{>->}[u] &  
dW_n \Omega^{r-1}_{X_1} / dV^{n-1}\Omega^{r-1}_{X_1}  \ar@{>->}[u] &
}
\]
\end{small}

By \cite[Lem.~1.2]{CTSS} the middle row is exact. 
Thus the top row is exact if and only if the map $\phi$ is an isomorphism.

The map $V : dW_n\Omega^{r-1}_{X_1} \to W_{n+1}\Omega^{r}_{X_1}$ is
divisible
by $p$. Denote by $\psi$ the factorization 
\[
V: dW_n\Omega^{r-1}_{X_1}  \xrightarrow{\psi}
 W_n\Omega^{r}_{X_1}  \xrightarrow{p}  W_{n+1}\Omega^{r}_{X_1}.
\]
The image of $\psi$  lies in $dVW_{n-1}\Omega^{r-1}_{X_1}$ as $Vd=pdV $.
The inverse of $\phi$ is given by $\psi + \psi^2 + \psi^3 + \cdots$.

Finally, for the Nisnevich topology, starting with the basic result for a coherent sheave $E$ that $\epsilon_*E_{\et} = E_{\Nis}$ and $R^i\epsilon_*E_{\et}=(0)$ for $i\ge 1$, one gets $\epsilon_*W_n\Omega^r_{X_1,\et} = W_n\Omega^r_{X_1,\Nis}$. Then, using results from \cite{Il}, Section 3.E, p.~579, one gets 
\[\epsilon_*\Big( W_n\Omega^r_{X_1,\et}/dVW_{n-1}\Omega^{r-1}_{X_1,\et}\Big) =
W_n\Omega^r_{X_1,\Nis}/dVW_{n-1}\Omega^{r-1}_{X_1,\Nis}.
\]
One concludes using proposition \ref{kato.thm} and left-exactness of $\epsilon_*$. 
\end{proof}

\smallskip

Denote by $F_r:\tau_{\ge r} q(r) W_n \Omega^\bulle_{X_1} \to \tau_{\ge r}  W_n
\Omega^\bulle_{X_1} $ the morphism which in degree $r+i$ is induced by $p^i\, F$.

\begin{lem}\label{syn.lem2}
For $i>0, r\ge 0$ the map
\[
(1-F_r ):  W_n \Omega^{r+i}_{X_1} \to W_n \Omega^{r+i}_{X_1}
\]
is an isomorphism in $\Sh(X_1)_\etNis$.
\end{lem}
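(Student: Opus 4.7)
The plan is to show that, for $i\ge 1$, the endomorphism $F_r$ of $W_n\Omega^{r+i}_{X_1}$ is nilpotent as a sheaf map, so that $1-F_r$ is automatically invertible with the finite geometric series $\sum_{k=0}^{n-1}F_r^{k}$ as its inverse.

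First I would unpack the definition: on $W_n\Omega^{r+i}_{X_1}$ (with $i\ge 1$) the map $F_r$ sends a local section $\omega$ to $p^{i}F(\tilde\omega)$, where $\tilde\omega\in W_{n+1}\Omega^{r+i}_{X_1}$ is any lift of $\omega$ and $F\colon W_{n+1}\Omega^\bullet\to W_n\Omega^\bullet$ is the usual Frobenius on the de Rham--Witt complex. Well-definedness is a routine check from the identities $FV=p$, $FdV=d$, together with $p\,W_1\Omega^\bullet=0$: two lifts of $\omega$ differ by an element of $V^{n}W\Omega^{r+i}+dV^{n}W\Omega^{r+i-1}$, and the factor $p^{i}$ (for $i\ge 1$) annihilates the images of such elements under $F$.

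The very formula $F_r(\omega)=p^{i}F(\tilde\omega)$ then immediately gives the inclusion $F_r(W_n\Omega^{r+i}_{X_1})\subseteq p^{i}W_n\Omega^{r+i}_{X_1}$. Since multiplication by $p$ commutes with $F_r$, iterating this yields
\[
F_r^{k}(W_n\Omega^{r+i}_{X_1})\ \subseteq\ p^{ki}\,W_n\Omega^{r+i}_{X_1}\qquad\text{for every } k\ge 0.
\]
Now $W_n\Omega^{r+i}_{X_1}$ is a $W_n$-module, hence annihilated by $p^{n}$. With $i\ge 1$ we have $ni\ge n$, so $F_r^{n}=0$. Therefore $1-F_r$ is invertible with two-sided inverse $\sum_{k=0}^{n-1}F_r^{k}$, and this inversion is stalk-local and consequently valid in both the \'etale and the Nisnevich topologies.

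The only mildly delicate point in the whole argument is the well-definedness of ``$p^{i}F$'' on $W_n\Omega^{r+i}_{X_1}$ in the first step; once that is in hand, nilpotency of $F_r$ follows at once from the factor $p^{i}$ together with the $p^{n}$-torsion of $W_n\Omega^\bullet_{X_1}$, and the rest is purely formal.
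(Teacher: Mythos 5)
Your proof is correct. The paper itself does not supply an argument; it simply cites Illusie, \cite[I,~Lem.~3.30]{Il}. Your argument is the standard one establishing that cited fact: unwind $F_r=p^iF\circ(\text{lift})$, note that the $p^i$ factor makes this well defined on $W_n\Omega^{r+i}_{X_1}$ for $i\ge 1$ (the ambiguity $dV^{n-1}$ is killed because $p\,dV^{n-1}=dV^{n-1}p=0$ on $W_1\Omega$, and similarly $pV^{n-1}=V^{n-1}p=0$), observe that $F_r$ commutes with multiplication by $p$ so $F_r^k\subseteq p^{ki}W_n\Omega^{r+i}_{X_1}$, and conclude nilpotency from the $p^n$-torsion of $W_n\Omega^\bulle_{X_1}$, whence $(1-F_r)^{-1}=\sum_{k=0}^{n-1}F_r^k$ stalkwise in either topology. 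This is a self-contained rendering of precisely the reference the paper invokes, so the approach is essentially the same.
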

\begin{proof} This is \cite[I.Lem.3.30]{Il}.
\end{proof}

\smallskip

In $\Sh_\pro (X_1)_{\Nis}$  the internal Frobenius 
$F:q(r) W_\bull \Omega^i_{X_1} \to W_\bull \Omega^i_{X_1} $ is divisible by $p^{r-i}$ for
$i<r$. Indeed, for a local section $p^{r-1-i}V\alpha\in q(r)W_\bull\Omega^i_{X_1}$,  $F(p^{r-1-i}V\alpha)= p^{r-1-i} FV(\alpha) $ and $FV=p$ (\cite[I.~Lem.4.4] {Il}).
We denote this divided Frobenius by $$F_r: q(r) W_\bull \Omega^i_{X_1} \to W_\bull
\Omega^i_{X_1} $$ as a morphism in $\C_\pro(X_1)_\Nis$. 

\smallskip

\begin{lem}\label{syn.lem3}
In $\D_\pro (X_1)_\etNis$ the map
\[
(1-F_r ): \tau_{<r} q(r) W_\bull \Omega^\bulle_{X_1} \to \tau_{<r}  W_\bull \Omega^\bulle_{X_1}
\]
becomes an isomorphism.
\end{lem}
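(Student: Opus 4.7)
The plan is to show something stronger than stated: that the chain map $1 - F_r$ is actually a termwise isomorphism of pro-sheaves on the truncated complexes, which immediately gives the desired isomorphism in $\D_\pro(X_1)_\etNis$.

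First I would verify that $1 - F_r$ is a chain map on the truncated complexes. For the interior degrees $i \le r-2$, using the explicit formula $F_r(p^{r-1-i} V\alpha) = \alpha$ on $q(r) W_\bull \Omega^i = p^{r-1-i} V W_\bull \Omega^i$ together with the de Rham--Witt identity $Vd = pdV$, one checks directly that $d \circ F_r = F_r \circ d$. In the top degree $r-1$, the good truncation $\tau_{<r}$ replaces the term by $\ker(d : q(r) W_\bull \Omega^{r-1} \to q(r) W_\bull \Omega^{r})$; using $Vd = pdV$ and $p$-torsion-freeness of $W_\bull \Omega^\bulle$ (as in the proof of Proposition~\ref{qis}), this kernel equals $V \,\ker(d : W_\bull \Omega^{r-1} \to W_\bull \Omega^r)$, and one verifies that $F_r$ sends it into $\ker(d : W_\bull \Omega^{r-1} \to W_\bull \Omega^r)$.

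The core move is to change parametrization. In each degree $i$ with $0 \le i \le r-2$, the map $\phi_i : W_\bull \Omega^i \to q(r) W_\bull \Omega^i$ defined by $\alpha \mapsto p^{r-1-i} V\alpha$ is an isomorphism of pro-sheaves (injectivity comes from injectivity of $V$ and $p$-torsion-freeness, surjectivity is by the definition of $q(r)$). An analogous restricted statement holds in degree $r-1$ on kernels of $d$. Using $Vd = pdV$, the $\phi_i$ assemble into an isomorphism of complexes $\phi : \tau_{<r} W_\bull \Omega^\bulle \xrightarrow{\sim} \tau_{<r} q(r) W_\bull \Omega^\bulle$. Under $\phi$, the map $1 - F_r$ becomes the chain endomorphism of $\tau_{<r} W_\bull \Omega^\bulle$ whose component in degree $i$ is the single operator $p^{r-1-i} V - 1$ (restricted to $\ker d$ in the top degree).

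The final ingredient is the pro-nilpotence of the Verschiebung: as a pro-morphism $V : W_\bull \Omega^i \to W_\bull \Omega^i$ shifts the Witt-level index up, so $V^n$ at the $n$-th level of the pro-system factors through $W_0 \Omega^i = 0$. Consequently $p^{r-1-i} V - 1$ is invertible in the pro-category with inverse given by the Neumann series $-\sum_{k \ge 0} (p^{r-1-i} V)^k$, which is a finite sum at each level. Hence each component of the chain map $1 - F_r$ is a pro-isomorphism, and therefore $1 - F_r$ itself is an isomorphism in $\D_\pro(X_1)_\etNis$. I expect the hardest part to be the bookkeeping in the top degree $r-1$, where the truncation forces one to work with $\ker d$ and to confirm that the identification $V \,\ker d \simeq \ker d$ is compatible with the differential entering from degree $r-2$ and with $F_r$; once the pro-nilpotence of $V$ is in hand, the rest of the argument is essentially formal.
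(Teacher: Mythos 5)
Your argument is correct and mirrors the paper's proof: both reparametrize $q(r)W_\bull\Omega^i_{X_1}\cong W_\bull\Omega^i_{X_1}$ via $\alpha\mapsto p^{r-1-i}V\alpha$, so that $1-F_r$ becomes the operator $1-p^{r-1-i}V$, and then invert this by the Neumann series $\sum_{n\ge 0}(p^{r-1-i}V)^n$, which terminates level-by-level since $V$ is pro-nilpotent on the de Rham--Witt pro-system. You are in fact somewhat more careful than the published proof in spelling out the compatibility with the good truncation in the top degree $r-1$, where one needs $\ker\bigl(d\colon VW_\bull\Omega^{r-1}\to W_\bull\Omega^r\bigr)=V\ker d$ and the stability of $\ker d$ under $V$ and $F_r$, both resting on the fact (used implicitly in the paper, see the proof of Proposition~\ref{qis}) that multiplication by $p$ is a monomorphism of pro-sheaves on $W_\bull\Omega^\bulle_{X_1}$.
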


\begin{proof}
Applying \cite[I,~Lem.~4.4]{Il}, 
one has for $i\le r-1$ and $\alpha$ a local section in $  W_\bull\Omega^i_{X_{1,
\rm \acute{e}t }}$
\ga{}{(1-F_r)(-p^{r-i-1}V\alpha)= \alpha- p^{r-i-1} V\alpha,           
\notag}
thus
\ga{}{ \alpha= (1-F_r)(\beta), \ \beta=-(p^{r-1-i}V)\sum_{n=0}^\infty
(p^{r-1-i}V)^n(\alpha) \in p^{r-i-1}VW_{\bull}\Omega^i_{X_{1, \rm{\acute{e}t   } }}.
 \notag }
On the other hand,  clearly if $ W_\bull\Omega^i_{X_{1,
\rm \acute{e}t }} \ni \alpha=  p^{r-i-1} V\alpha$, then $\alpha \in  (p^{r-i-1}
V)^n W_\bull\Omega^i_{X_{1,
\rm \acute{e}t }}$ for all $n\ge 1$, thus $\alpha=0$.  This finishes the proof.

\end{proof}

\smallskip

Putting Lemmas \ref{Wlog}, \ref{syn.lem2} and \ref{syn.lem3} together we get

\begin{cor}\label{syn.cor1}
In $\D_\pro(X_1)_\et$ there is an exact triangle
\[
W_\bull \Omega^r_{X_1,\log} [-r] \to q(r) W_\bull \Omega^\bulle_{X_1}
\xrightarrow{1-F_r}  W_\bull \Omega^\bulle_{X_1} \xrightarrow{[1]} \cdots  .
\]
\end{cor}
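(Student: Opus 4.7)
Plan: The goal is to identify the shifted cone $K := \cone(1-F_r)[-1]$, where $1-F_r\colon q(r)W_\bull\Omega^\bulle_{X_1} \to W_\bull\Omega^\bulle_{X_1}$ in $\C_\pro(X_1)_\et$, with $W_\bull\Omega^r_{X_1,\log}[-r]$. Each of the three preparatory lemmas controls a different cohomological range, so I would assemble them via a two-step truncation reduction that isolates the degree-$r$ piece, where Lemma \ref{Wlog} can be applied directly. Write $A := q(r)W_\bull\Omega^\bulle_{X_1}$, $B := W_\bull\Omega^\bulle_{X_1}$, and $f := 1-F_r$.

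First I would trim the part in degrees below $r$. The canonical truncation triangle $\tau_{<r}A \to A \to \tau_{\geq r}A$ is functorial in $f$, producing a triangle of shifted cones $\cone(\tau_{<r}f)[-1] \to K \to \cone(\tau_{\geq r}f)[-1]$. By Lemma \ref{syn.lem3}, the map $\tau_{<r}f$ is an isomorphism in $\D_\pro(X_1)_\et$, so the first term vanishes and $K \simeq \cone(\tau_{\geq r}f)[-1]$.

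Next I would trim the part in degrees above $r$ via the short exact sequence of complexes
$$0 \to \sigma_{>r}A \to \tau_{\geq r}A \to (A^r/dA^{r-1})[-r] \to 0$$
(and its analogue for $B$), where $\sigma_{>r}$ is the naive truncation keeping degrees $>r$. This is a genuine morphism of complexes since the third term is concentrated in degree $r$ with vanishing differential. By Lemma \ref{syn.lem2}, $\sigma_{>r}f$ is termwise an isomorphism of sheaves (as $F_r = p^{i-r}F$ on $W_\bull\Omega^i_{X_1}$ for $i > r$), hence a quasi-isomorphism, so from the resulting triangle of shifted cones one gets $\cone(\tau_{\geq r}f)[-1] \simeq \cone\bigl((1-F_r)\colon A^r/dA^{r-1} \to B^r/dB^{r-1}\bigr)[-1][-r]$, the map now being one of sheaves.

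Level-wise at $n$ the terms become $W_n\Omega^r_{X_1}/dVW_{n-1}\Omega^{r-1}_{X_1}$ and $W_n\Omega^r_{X_1}/dW_n\Omega^{r-1}_{X_1}$, and the induced map is exactly the one in Lemma \ref{Wlog}. The \'etale statement of that lemma provides a short exact sequence of pro-sheaves with kernel $W_\bull\Omega^r_{X_1,\log}$, so the final shifted cone of sheaves is $W_\bull\Omega^r_{X_1,\log}$, and the overall composition yields $K \simeq W_\bull\Omega^r_{X_1,\log}[-r]$. Unwinding the identifications, the quasi-isomorphism is induced by the inclusion $W_\bull\Omega^r_{X_1,\log} \subset W_\bull\Omega^r_{X_1} = A^r$ in degree $r$, which lies in $\ker(1-F_r)$ since $F$ fixes the symbols $d\log[a_i]$. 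The main care is needed in the truncation bookkeeping — in particular verifying that the short exact sequence of complexes in the second step is a true morphism of complexes and is functorial in $f$ — but once this is set up, Lemma \ref{Wlog} closes the argument by inspection.
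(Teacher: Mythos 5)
Your proof is correct and follows the same route the paper has in mind: the paper's own argument is the single sentence "Putting Lemmas \ref{Wlog}, \ref{syn.lem2} and \ref{syn.lem3} together we get," and the intended truncation bookkeeping is exactly what you spell out (it also appears, essentially verbatim, as step $\rm (2)$--$\rm(3)$ of the chain \eqref{syn.chainofmaps2} in the proof of Proposition~\ref{SI}). Your extra observations — that the brutal-truncation sequence is a genuine short exact sequence of complexes, that $(\tau_{\ge r}A)^r = W_\bull\Omega^r_{X_1}/dVW_\bull\Omega^{r-1}_{X_1}$ so the degree-$r$ piece is literally Lemma~\ref{Wlog}, and that the resulting identification is realized by the inclusion of $W_\bull\Omega^r_{X_1,\log}$ into $\ker(1-F_r)$ in degree $r$ — are all sound and fill in the details the paper leaves implicit.
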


\bigskip

\begin{rmk}\label{synt.rmkbig}
To end this section we remark that one can define the syntomic complex in
$D_\pro(\Sch_{W_\bull,\etNis})$, where $\Sch_{W_\bull,\etNis}$ is the big \'etale
resp.\ Nisnevich site with underlying category $\Sch_{W_\bull}$. For this one uses the
syntomic site and the crystalline Frobenius
instead of the immersion $X_\bull \hookrightarrow Z_\bull$ and the Frobenius lift on $Z_\bull$, see \cite{GK}, \cite{FonMes}.
\end{rmk}

\section{Fundamental triangle}\label{ftriangle}

\noindent
Let $X_\bull $ be in $\Sm_{W_\bull}$ and assume $r<p$.
The goal of this section is to decompose the Nisnevich syntomic complex
$\frak{S}_{X_\bull}(r)$ in a part $W_\bull \Omega^r_{X_1,\log}[-r]$
stemming from the reduced fibre $X_1$ and a `deformation part' $p(r) \Omega^{<r}_{X_\bull}[-1]$.

As a technical device we need a variant of the syntomic complex with $J(r)$ replaced by
$I(r)$. In analogy with  Definition~\ref{frJ} we propose:

\begin{defn}  \label{frI}
Let $f_r$ be  the canonical  factorization of Frobenius map 
$$F: I(r)\Omega^\bulle_{D_{n+r}} \xrightarrow{f_r} \Omega^\bulle_{D_n} \xrightarrow{
   p^r} \Omega^\bulle_{D_{n+r}}.$$
\end{defn} 

Note that  this time there is no factorization of the form
$$f_r :  I(r)\Omega^\bulle_{D_{n+r}}  \xrightarrow{\rm rest}
I(r)\Omega^\bulle_{D_n}  \to \Omega^\bulle_{D_n}.$$
We write
\[
 I(r)\Omega^\bulle_{D_\bull} \xrightarrow{f_r} \Omega^\bulle_{D_\bull}
\] 
for the induced  morphism  in $\C_\pro(X_1)$.

\begin{defn}

One defines 
$$ \frak{S}^I_{X_\bull}(r)_\et =\cone  (     I(r)\Omega^\bulle_{D_\bull}
\xrightarrow{1-f_r}   \Omega^\bulle_{D_\bull}) [-1]  $$
in $\D_\pro(X_1)_\et$. 
In the  Nisnevich topology we define
\[
 \frak{S}^I_{X_{\bull}}(r) = \tau_{\le r}  R\epsilon_* \, \frak{S}^I_{X_{\bull}}(r)_\et 
\] 
in $\D_\pro(X_1)_\Nis$.
\end{defn}

\smallskip

\begin{prop} \label{SI}
For $X_\bull$ in $\Sm_{W_\bull}$ the map $\Phi(F)$
induces an isomorphism 
\ga{}{\frak{S}^I_{X_\bull}(r)_\et \xrightarrow{\Phi^I}
W_\bull \Omega^r_{X_1,\log}[-r]   \notag     }
in $\D_\pro(X_1)_\et$. 
In particular applying the composed functor $\tau_{\le r}\circ R\epsilon_*$   we also get an
isomorphism 
\[
\frak{S}^I_{X_\bull}(r) \xrightarrow{\Phi^I}
W_\bull \Omega^r_{X_1,\log}[-r]
\] 
in $\D_\pro (X_1)_\Nis$.
\end{prop}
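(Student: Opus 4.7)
The plan is to reduce Proposition~\ref{SI} to Corollary~\ref{syn.cor1} via the map $\Phi(F)$, using Proposition~\ref{qis} to translate the de~Rham--envelope side to the de~Rham--Witt side.

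First, I would apply $\Phi(F)$ to both terms of the defining cone
\[
\frak{S}^I_{X_\bull}(r)_\et = \cone\!\bigl( I(r)\Omega^\bulle_{D_\bull} \xrightarrow{1-f_r} \Omega^\bulle_{D_\bull}\bigr)[-1].
\]
By Illusie's theorem, $\Phi(F):\Omega^\bulle_{D_\bull}\to W_\bull\Omega^\bulle_{X_1}$ is a quasi-isomorphism, and by Proposition~\ref{qis} (with $X_\bull=D_\bull$ for the local argument, together with the analogous statement for $I(r)$ proved exactly as in the $J(r)$ case since the inductive argument is insensitive to the $p$-factor introduced by the extra copy of the ideal), $\Phi(F):I(r)\Omega^\bulle_{D_\bull}\to q(r)W_\bull\Omega^\bulle_{X_1}$ is also a quasi-isomorphism in $\D_\pro(X_1)_\et$.

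Second, I would check the key compatibility: under $\Phi(F)$, the map $1-f_r$ on $I(r)\Omega^\bulle_{D_\bull}$ corresponds to $1-F_r$ on $q(r)W_\bull\Omega^\bulle_{X_1}$. In degree $i\ge r$ this reduces to the square already displayed in Section~\ref{sec.synt} for $J(r)$, since there $I(r)$ and $J(r)$ coincide. In degree $i<r$ one uses Definition~\ref{frI}: the factorization $F=p^r\circ f_r$ on $I(r)\Omega^i_{D_\bull}$ corresponds under $\Phi(F)$ to $F=p^r\circ F_r$ on $p^{r-i}W_\bull\Omega^i_{X_1}$ (for $i<r$, $q(r)W_\bull\Omega^i$ contains $p^{r-i}W_\bull\Omega^i$), because $FV=p$ and $F$ is divisible by $p^{r-i}$ precisely on the image of $p^{r-i-1}V$; see the computation preceding Lemma~\ref{syn.lem3}. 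Thus $\Phi(F)$ induces an isomorphism
\[
\frak{S}^I_{X_\bull}(r)_\et \xrightarrow{\sim} \cone\!\bigl( q(r)W_\bull\Omega^\bulle_{X_1} \xrightarrow{1-F_r} W_\bull\Omega^\bulle_{X_1}\bigr)[-1]
\]
in $\D_\pro(X_1)_\et$.

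Third, by rotating the exact triangle of Corollary~\ref{syn.cor1}, the right-hand side is canonically isomorphic to $W_\bull\Omega^r_{X_1,\log}[-r]$, which gives $\Phi^I$ and the first assertion. For the Nisnevich statement, I apply $\tau_{\le r}R\epsilon_*$ to both sides. On the right, $W_\bull\Omega^r_{X_1,\log,\et}[-r]$ lives in cohomological degree $r$, so $R\epsilon_*$ of it has its degree-$r$ cohomology equal to $\epsilon_*W_\bull\Omega^r_{X_1,\log,\et}$ and all further cohomology in degrees $>r$; by Proposition~\ref{kato.thm} the degree-$r$ term is $W_\bull\Omega^r_{X_1,\log,\Nis}$, and $\tau_{\le r}$ discards the rest. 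This yields the desired isomorphism in $\D_\pro(X_1)_\Nis$.

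The main obstacle I expect is the second step, namely verifying that $\Phi(F)$ intertwines $1-f_r$ on $I(r)\Omega^\bulle_{D_\bull}$ with $1-F_r$ on $q(r)W_\bull\Omega^\bulle_{X_1}$ in low degrees $i<r$, where the definition of $f_r$ requires the auxiliary thickening $D_{n+r}$ and $F_r$ is only the divided Frobenius. Once the compatibility is pinned down via the identity $F=p^r\cdot f_r$ and the divisibility $F|_{p^{r-i-1}VW_\bull\Omega^i} = p^{r-i}\cdot F_r$, the rest of the argument is formal from Proposition~\ref{qis} and Corollary~\ref{syn.cor1}.
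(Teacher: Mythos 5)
Your proposal is correct and follows essentially the same route as the paper's own proof. The paper chains three isomorphisms: $(1)$ the map $\Phi(F)$ to $\cone(q(r)W_\bull\Omega^\bulle \xrightarrow{1-F_r} W_\bull\Omega^\bulle)[-1]$ via Proposition~\ref{qis}, $(2)$ the truncation isomorphism via Lemmas~\ref{syn.lem2} and~\ref{syn.lem3}, and $(3)$ the identification with $W_\bull\Omega^r_{X_1,\log}[-r]$ via Lemma~\ref{Wlog}, then handles the Nisnevich case with Proposition~\ref{kato.thm}. You package steps $(2)$ and $(3)$ through Corollary~\ref{syn.cor1} (which is precisely the combination of those three lemmas), which is cosmetically cleaner; and you are more explicit than the paper about the point that $\Phi(F)$ intertwines $1-f_r$ with $1-F_r$ (the paper cites Proposition~\ref{qis} for step $(1)$, which on its face only controls the filtration, not the Frobenius structure, so your flagging of the compatibility check is a legitimate and welcome precision). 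The Nisnevich reduction via $\tau_{\le r}R\epsilon_*$ and Proposition~\ref{kato.thm} is identical.
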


\begin{proof}
Indeed we have the chain of isomorphisms in $\D_\pro(X_1)_\et$.
\begin{equation}\label{syn.chainofmaps2} \xymatrix{
\frak{S}_{X_\bull}^I (r)_\et \ar[d]^{\Phi(F)}_-{(1)} \\
\cone ( q(r) W_\bull \Omega^\bulle \xrightarrow{1-F_r}  W_\bull
\Omega^\bulle   ) [-1] \ar[d]_-{(2)}   \\
  \cone ( W_\bull \Omega^r/ dVW_{\bull}\Omega^{r-1}  \xrightarrow{1-F_r}    W_\bull \Omega^r /
  dW_\bull\Omega^{r-1}   )[-r-1]      \\
W_\bull \Omega^r_{X_1,\log} [-r]  \ar[u]^-{(3)}
}
\end{equation}
where $\rm (1)$ is an isomorphism by Proposition~\ref{qis}, $\rm (2)$ is defined and  an isomorphism by
Lemmas \ref{syn.lem2} and \ref{syn.lem3} and $\rm (3)$ is an isomorphism by Lemma
\ref{Wlog}.

For Nisnevich topology we have $$  \tau_{\le 0}\circ R \epsilon_*\, W_n\Omega^r_{X,\log,\et}=  \epsilon_*\, W_n\Omega^r_{X,\log,\et}= W_n\Omega^r_{X,\log,\Nis} $$
by Proposition~\ref{kato.thm}.
\end{proof}

Recall that we work in Nisnevich
topology if not specified otherwise.

\begin{thm}[Fundamental triangle] \label{syntF}
For $X_\bull $ in $\Sm_{W_\bull}$ one has an exact triangle
\ga{}{      
p(r)\Omega^{< r}_{X_\bull}[-1] \to 
\frak{S}_{ X_\bull}(r) \xrightarrow{\Phi^J} W_\bull \Omega^r_{X_1,
\log}[-r]
\xrightarrow{[1]}       \ldots                 \notag}
in  $\D_\pro(X_1)$.
In particular, the support of $\frak{S}_{ X_\bull }(r) $ lies in degrees $[1 ,  
r]$ for $r\ge 1$.

\end{thm}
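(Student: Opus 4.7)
My plan is to compare $\frak{S}_{X_\bull}(r)$ with the auxiliary complex $\frak{S}^I_{X_\bull}(r)$ of the previous section. By Proposition~\ref{SI}, the latter already computes $W_\bull\Omega^r_{X_1,\log}[-r]$, so the ``difference'' between the two syntomic-type complexes should account for the deformation term $p(r)\Omega^{<r}_{X_\bull}[-1]$.

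I would work first in the étale topology. The canonical termwise inclusion $\alpha : J(r)\Omega^\bulle_{D_\bull} \hookrightarrow I(r)\Omega^\bulle_{D_\bull}$ is compatible with the maps $1 - f_r$ into $\Omega^\bulle_{D_\bull}$, since by Definitions~\ref{frJ} and~\ref{frI} the map $f_r$ on $J(r)$ is the composition of $\alpha$ with $f_r$ on $I(r)$. The octahedral axiom applied to the composition $J(r)\Omega^\bulle_{D_\bull} \xrightarrow{\alpha} I(r)\Omega^\bulle_{D_\bull} \xrightarrow{1-f_r} \Omega^\bulle_{D_\bull}$ then yields a distinguished triangle in $\D_\pro(X_1)_\et$:
\begin{equation*}
(I(r)/J(r))\Omega^\bulle_{D_\bull}[-1] \to \frak{S}_{X_\bull}(r)_\et \to \frak{S}^I_{X_\bull}(r)_\et \xrightarrow{[1]},
\end{equation*}
where the cone of $\alpha$ is identified with the termwise quotient $(I(r)/J(r))\Omega^\bulle_{D_\bull}$ because $\alpha$ is injective.

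Next, I would identify this cofiber with $p(r)\Omega^{<r}_{X_\bull}$ in $\D_\pro(X_1)_\et$. The quasi-isomorphisms~\eqref{2.8a} and~\eqref{cris.iso} assemble into a commutative square in the derived category whose top horizontal arrow is $\alpha$ and whose bottom horizontal arrow is the natural inclusion $\Omega^{\ge r}_{X_\bull} \hookrightarrow p(r)\Omega^\bulle_{X_\bull}$; both identifications are realized by restriction to $X$, which shows compatibility. Since both verticals are quasi-isomorphisms, the induced map on cofibers is a quasi-isomorphism as well. Combining with Proposition~\ref{SI} then produces the étale version of the fundamental triangle.

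Finally, I descend to the Nisnevich topology by applying $\tau_{\le r} R\epsilon_*$. The first term $p(r)\Omega^{<r}_{X_\bull}[-1]$ is a bounded complex of coherent (pro-)sheaves concentrated in degrees $[1, r]$, so $R\epsilon_*$ reduces to $\epsilon_*$ and $\tau_{\le r}$ leaves it unchanged. For the third term, the cohomology sheaves of $R\epsilon_* W_\bull\Omega^r_{X_1,\log,\et}[-r]$ live in degrees $\ge r$, and Proposition~\ref{kato.thm} identifies the degree-$r$ piece as $W_\bull\Omega^r_{X_1,\log,\Nis}$; hence $\tau_{\le r}$ of this term equals $W_\bull\Omega^r_{X_1,\log,\Nis}[-r]$. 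The middle term becomes $\frak{S}_{X_\bull}(r)$ by Definition~\ref{S}. The one delicate point, which I would verify by a direct check on the long exact sequence of cohomology sheaves, is that $\tau_{\le r}$ preserves the distinguished triangle obtained from $R\epsilon_*$: this follows because the outer terms are supported in $[1, r]$ and $[r, \infty)$ respectively, so the cohomology sheaves of the middle term in degrees $> r$ agree with those of the third term and are uniformly truncated. The support statement in $[1, r]$ is then immediate from the outer terms of the resulting Nisnevich triangle.
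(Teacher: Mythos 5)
Your proof is correct and follows essentially the same route as the paper's. The paper also compares $\frak{S}_{X_\bull}(r)$ with $\frak{S}^I_{X_\bull}(r)$: it introduces $\mathfrak W(r)=\cone(J(r)\Omega^\bulle_{D_\bull}\to I(r)\Omega^\bulle_{D_\bull})[-1]$, obtains the \'etale triangle from Proposition~\ref{SI} (which, as you make explicit, is an application of the octahedral axiom to the composite $J(r)\Omega^\bulle_{D_\bull}\to I(r)\Omega^\bulle_{D_\bull}\xrightarrow{1-f_r}\Omega^\bulle_{D_\bull}$ and hence requires exactly the compatibility of $f_r$ with the inclusion that you verify), identifies $\mathfrak W(r)\simeq p(r)\Omega^{\le r-1}_{X_\bull}[-1]$ via Proposition~\ref{qis}, and then passes to the Nisnevich site by $\tau_{\le r}\circ R\epsilon_*$ using the $\epsilon$-acyclicity of the coherent term and Lemma~\ref{homalg.lem1}, which is the uniform-truncation argument you spell out.
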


\begin{proof}
 We first construct the \'etale version of the triangle.
Let
\[
\mathfrak W(r)= \cone  ( J(r)\Omega^\bulle_{D_{\bull}} \xrightarrow{}
I(r)\Omega^\bulle_{D_{\bull}})[-1] .
\]

 Proposition~\ref{SI} implies that one
has an exact triangle 
\ga{fund}{ \mathfrak W(r)  \to 
\frak{S}_{ X_\bull}(r)_\et \xrightarrow{\Phi^J} W_\bull \Omega^r_{X_1,
\log}[-r]
\xrightarrow{[1]}       \ldots                 }
in $\D_\pro(X_1)_\et$.

By Proposition~\ref{qis} we conclude that the restriction
map from $D_\bull$ to $X_\bull$ induces an isomorphism 
\ga{}{ \mathfrak W(r)   \xrightarrow{{\rm rest} } 
p(r)\Omega^{\le r-1}_{
X_\bull }[-1]
  \notag}
in $\D_\st(X_1)_\et$. 

We now come to the Nisnevich version. One has to show that applying $\tau_{\le r}\circ R \epsilon_*$
to  exact triangle \eqref{fund}, one obtains an exact triangle in Nisnevich 
topology.
  One has an isomorphism  
\[
\epsilon_* p(r)
\Omega^{\le r-1}_{
X_\bull}[-1]\xrightarrow{\simeq}  R\epsilon_* p(r)\Omega^{\le r-1}_{
X_\bull}[-1]
\]
in $\D_\st(X_1)_\Nis$,
 thus in particular the latter complex  has support in cohomological degrees  $[1, r]$. 
Applying Lemma \ref{homalg.lem1} finishes the proof. 

\end{proof}

\smallskip

\begin{rmk}
In analogy with Remark~\ref{synt.rmkbig} the complex
$\frak{S}^I_{X_\bull}(r)_\etNis$ extends to an object in the global category
$D_\pro(\Sch_{W_\bull, \etNis})$.
The isomorphism in Proposition \ref{SI} extends to an isomorphism in $D_\pro(\Sm_{W_\bull,
  \etNis})$. Although the construction in the proof is valid only on the small site $X_{1,\etNis}$, the
isomorphism for different $X_\bull$ glue canonically.
So it follows that also the fundamental triangle in Theorem~\ref{syntF} extends to $D_\pro(\Sm_{W_\bull,
  \Nis})$.
\end{rmk}

\smallskip

\section{Connecting morphism in fundamental triangle}\label{conmorph}

\noindent
Let the notation be as in Section \ref{ftriangle}, in particular let
 $X_\bull$ be in $\Sm_{W_\bull}$. We assume $p> r$. 
The aim of this section is to show the following 
\begin{thm} \label{comp}
The connecting homomorphism 
\ga{}{\alpha: W_\bull \Omega^r_{X_1,
\log}[-r] \to  p(r)\Omega^{\le r-1}_{
X_\bull} \notag}
in the fundamental triangle (Theorem \ref{syntF}) is equal to
the composite morphism
\ga{}{\beta:  W_\bull \Omega^r_{X_1,
\log}[-r] \to W_\bull\Omega^{\ge r}_{X_1} \to
q(r)W_\bull\Omega^\bulle_{X_1}\xrightarrow{ {\rm Prop.~\ref{qis}}}
p(r)\Omega^\bulle_{X_\bull} \to   p(r)\Omega^{\le r-1}
_{X_\bull} \notag }
in $\D_\pro(X_1)$.
Here the non-labelled maps are the natural ones. 
\end{thm}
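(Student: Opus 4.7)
The plan is to realise both $\alpha$ and $\beta$ as connecting morphisms of compatible short exact sequences of complexes, related by the Frobenius-lift quasi-isomorphism $\Phi(F)$ of Proposition~\ref{qis}; once this transport is set up, $\alpha=\beta$ will follow by reading off the connecting morphism on each side.

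First, I reduce to the \'etale topology. Both $\alpha$ and $\beta$ are obtained from their \'etale counterparts by applying $\tau_{\le r}R\epsilon_{*}$, exactly as in the passage from the \'etale to the Nisnevich form of the fundamental triangle in the proof of Theorem~\ref{syntF}, combined with Proposition~\ref{kato.thm} to handle $W_\bull\Omega^r_{X_1,\log}$. In the \'etale setting, the fundamental triangle arises from the honest short exact sequence of complexes of pro-sheaves
\[
0\to\mathfrak{S}(r)_\et\to\mathfrak{S}^I(r)_\et\to\mathfrak{Q}(r)\to 0,
\]
where $\mathfrak{Q}(r):=I(r)\Omega^\bulle_{D_\bull}/J(r)\Omega^\bulle_{D_\bull}$ is equipped with the induced de Rham differential (the $1-f_r$ contributions cancel on the quotient because the $\Omega^\bulle_{D_\bull}$-summands of $\mathfrak{S}(r)_\et$ and $\mathfrak{S}^I(r)_\et$ coincide). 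Restriction from $D_\bull$ to $X_\bull$ yields a quasi-isomorphism $\mathfrak{Q}(r)\simeq p(r)\Omega^{<r}_{X_\bull}$ (by the same argument as for \eqref{cris.iso}), and combined with the identification of Proposition~\ref{SI} this exhibits $\alpha$ as precisely the connecting morphism of this SES.

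Next I transport the SES to the de Rham--Witt world via $\Phi(F)$. Proposition~\ref{qis} already supplies the quasi-isomorphisms $\Phi(F)\colon\Omega^\bulle_{D_\bull}\xrightarrow{\sim}W_\bull\Omega^\bulle_{X_1}$ and $\Phi(F)\colon I(r)\Omega^\bulle_{D_\bull}\xrightarrow{\sim}q(r)W_\bull\Omega^\bulle_{X_1}$; rerunning the induction on $\sum\nu_i$ in its proof, taking $\nu_i=+\infty$ for $i<r$ and $\nu_i=0$ for $i\ge r$, yields the companion quasi-isomorphism $\Phi(F)\colon J(r)\Omega^\bulle_{D_\bull}\xrightarrow{\sim}W_\bull\Omega^{\ge r}_{X_1}$ in $D_\pro(X_1)_\et$. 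Applying these transports compatibly to the SES above produces a quasi-isomorphic SES whose middle term is identified with $W_\bull\Omega^r_{X_1,\log}[-r]$ by Lemmas~\ref{Wlog}, \ref{syn.lem2} and \ref{syn.lem3}, and whose quotient term is $q(r)W_\bull\Omega^\bulle_{X_1}/W_\bull\Omega^{\ge r}_{X_1}\simeq p(r)\Omega^{<r}_{X_\bull}$ compatibly with the identification of $\mathfrak{Q}(r)$ from the first step. Reading off the connecting morphism of this transported SES then produces the composite
\[
W_\bull\Omega^r_{\log}[-r]\hookrightarrow W_\bull\Omega^{\ge r}_{X_1}\hookrightarrow q(r)W_\bull\Omega^\bulle_{X_1}\twoheadrightarrow q(r)W_\bull\Omega^\bulle_{X_1}/W_\bull\Omega^{\ge r}_{X_1}\xrightarrow{\sim}p(r)\Omega^{<r}_{X_\bull},
\]
which is precisely the definition of $\beta$. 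Hence $\alpha=\beta$.

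The main technical obstacle is the auxiliary comparison $\Phi(F)\colon J(r)\Omega^\bulle_{D_\bull}\simeq W_\bull\Omega^{\ge r}_{X_1}$ and the verification that the various identifications assemble into a map of short exact sequences. A further subtlety is that in cohomological degree $r$ the chain-level quotient $I(r)\Omega^r_{D_\bull}/J(r)\Omega^r_{D_\bull}$ vanishes, so the non-triviality of $\alpha$ (and of $\beta$) as a derived-category morphism is captured entirely by the zigzag identifications; the equality $\alpha=\beta$ cannot be checked at the cocycle level and must be established in $D_\pro(X_1)_\et$.
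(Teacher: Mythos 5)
Your reduction to the \'etale topology and the identification of $\alpha$ with the connecting morphism of the short exact sequence of complexes $0 \to \frak{S}_{X_\bull}(r)_\et \to \frak{S}^I_{X_\bull}(r)_\et \to \mathfrak{Q}(r) \to 0$, with $\mathfrak{Q}(r)=I(r)\Omega^\bulle_{D_\bull}/J(r)\Omega^\bulle_{D_\bull}\simeq p(r)\Omega^{<r}_{X_\bull}$, are fine and consistent with the construction of the fundamental triangle. The transport step is where the argument breaks: the asserted quasi-isomorphism $\Phi(F)\colon J(r)\Omega^\bulle_{D_\bull}\xrightarrow{\sim}W_\bull\Omega^{\ge r}_{X_1}$ does not exist. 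The sequence ``$\nu_i=+\infty$'' is not covered by the definition of $q(\nu_*)W_\bull\Omega^\bulle$ in \eqref{4.5}, which requires $\nu_i<p$ and $\nu_i-\nu_{i+1}\in\{0,1\}$, so the induction of Proposition~\ref{qis} cannot be ``rerun'' in that way. More importantly the statement is false. For $r=1$ with $D_\bull=X_\bull$ it would read $\Omega^{\ge 1}_{X_\bull}\simeq W_\bull\Omega^{\ge 1}_{X_1}$, and the five lemma applied to $0\to\Omega^{\ge 1}_{X_\bull}\to\Omega^\bulle_{X_\bull}\to\sO_{X_\bull}\to 0$ and its de Rham--Witt analogue, together with Illusie's quasi-isomorphism for the middle terms, shows this is equivalent to $\Phi(F)\colon\sO_{X_\bull}\to W_\bull\sO_{X_1}$ being an isomorphism in $\Sh_\pro(X_1)$. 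But $\Phi(F)\colon\sO_{X_n}\to W_n\sO_{X_1}$ is not surjective for $\dim X_1>0$ (on the graded pieces of the $p$-adic and $V$-filtrations it is a power of the absolute Frobenius of $\sO_{X_1}$), and since the transition maps $W_{m}\sO_{X_1}\to W_n\sO_{X_1}$ are surjective the cokernel is not even pro-zero. The same argument for arbitrary $r$, filtering by degree, kills the claimed companion quasi-isomorphism.

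This is not a fixable detail but the crux of the matter: the incompatibility of the comparison $q(r)W_\bull\Omega^\bulle_{X_1}\simeq p(r)\Omega^\bulle_{X_\bull}$ with the subcomplexes $W_\bull\Omega^{\ge r}_{X_1}$ and $\Omega^{\ge r}_{X_\bull}$ is exactly what makes $\alpha$ and $\beta$ nonzero. The composite you give at the end, $W_\bull\Omega^r_{X_1,\log}[-r]\hookrightarrow W_\bull\Omega^{\ge r}_{X_1}\hookrightarrow q(r)W_\bull\Omega^\bulle_{X_1}\twoheadrightarrow q(r)W_\bull\Omega^\bulle_{X_1}/W_\bull\Omega^{\ge r}_{X_1}$, vanishes identically at the chain level, so it cannot coincide with $\beta$; the actual $\beta$ passes from $q(r)W_\bull\Omega^\bulle_{X_1}$ to $p(r)\Omega^\bulle_{X_\bull}$ by the zigzag of Proposition~\ref{qis} before projecting to $p(r)\Omega^{<r}_{X_\bull}$, and that zigzag does not carry $W_\bull\Omega^{\ge r}_{X_1}$ to $\Omega^{\ge r}_{X_\bull}$. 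The paper gets around this by introducing the auxiliary complex $\frak{S}'_{X_\bull}(r)=\cone(\frak{S}_{X_\bull}(r)\xrightarrow{\kappa(r)}\Omega^{\ge r}_{X_\bull})[-1]$ of Definition~\ref{Sprime}, exhibiting the chain of isomorphisms \eqref{con.chain} to $\frak E(r)=\cone(W_\bull\Omega^r_{X_1,\log}[-r]\to q(r)W_\bull\Omega^\bulle_{X_1})[-1]$, and then comparing the two relevant triangles through the middle object $\frak{S}'_{X_\bull}(r)$ in Proposition~\ref{alpha}. In effect, the comparison is carried by a three-term cone mapping simultaneously to both sides, not by a (nonexistent) quasi-isomorphism of the degree-$\ge r$ subcomplexes.
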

The theorem will imply the compatibility of $\alpha$ with the cycle class, see Section \ref{hodgeobstruction}.

\medskip

First of all we observe that it is enough to prove Theorem~\ref{comp} in \'etale topology,
i.e. that $\epsilon^*(\alpha) = \epsilon^*(\beta)$, because $\alpha= \tau_{\le r} ( \epsilon_*
\circ\epsilon^*( \alpha)) $ and $\beta = \tau_{\le r} ( \epsilon_* \circ\epsilon^*(
\beta)) $.

Definition~\ref{S} of $\frak{S}_{X_\bull}(r)_\et$ as a cone
gives a map $\frak{S}_{X_\bull}(r) \to J(r)\Omega^\bulle_{D_\bull}$ in
$\C_\pro(X_1)_\et$. Note that by Proposition~\ref{qis} there is a natural restriction quasi-isomorphism  
$ J(r)\Omega^\bulle_{D_\bull} \to \Omega^{\ge r}_{X_\bull} $.
We let $\kappa(r)$ be the composite map 
\ga{}{ \frak{S}_{X_\bull}(r)_\et \to  J(r)\Omega^\bulle_{D_\bull}
 \to \Omega^{\ge r}_{X_\bull} \quad \text{ in }  \C_\pro(X_1)_\et .\notag}
\begin{defn} \label{Sprime}
 We define $\frak{S}'_{X_\bull}(r)_\et=\cone (\frak{S}_{X_\bull}(r)_\et\xrightarrow{\kappa(r)}
\Omega^{\ge r}_{X_\bull})[-1]$ as an object in $\C_\pro(X_1)$.
\end{defn}

\medskip

The morphism $\Phi^J:  \frak{S}_{ X_\bull}(r) \to
W_\bull\Omega^r_{X_1, \log}[-r]$ in $\D_\pro(X_1)$ from Theorem~\ref{syntF} induces a morphism 
$\frak{S}'_{ X_\bull}(r) \to
W_\bull\Omega^r_{X_1, \log}[-r]$, still denoted by $\Phi^J$. 

We have a chain of isomorphisms in $\D_\pro(X)_\et$
\begin{equation}\label{con.chain}
\xymatrix{
\frak S'_{X_\bull}(r)_\et \ar[d]_-{(1)} \\
\cone (\frak S^I_{X_\bull}(r)_\et \to I(r) \Omega_{D_\bull}^\bulle  )[-1] \ar[d]_-{(2)} \\
\cone \big( \cone ( q(r)W_\bull\Omega^\bulle_{X_1} \xrightarrow{1-F_r}
W_\bull\Omega^\bulle_{X_1}   )[-1] \to q(r)W_\bull\Omega^\bulle_{X_1}  \big)[-1] \\
\frak E(r) :=  \cone ( W_\bull\Omega^\bulle_{X_1,\log}[-r] \to  q(r)W_\bull\Omega^\bulle_{X_1} )[-1]  \ar[u]^-{(3)} 
}
\end{equation}
where  $\rm (1)$ follows immediately from Definition \ref{Sprime}, $\rm (2)$ follows from
Proposition~\ref{qis} and $\rm (3)$ follows from Corollary \ref{syn.cor1}.

\begin{prop} \label{alpha}
 \begin{itemize}
  \item[(1)] In  $\D_\pro(X_1)_\et$, one has an exact triangle
\ga{}{    p(r)\Omega^\bulle_{
X_\bull}[-1] \to 
\frak{S}'_{ X_\bull}(r) \xrightarrow{\Phi^J} W_\bull\Omega^r_{X_1, \log}[-r]
\xrightarrow{[+1]}       \cdots                 \notag}
\item[(2)] In  $\D_\pro(X_1)_\et$, one has a commutative diagram of   
exact triangles
\[
\xymatrix{
q(r) W_\bull \Omega^\bulle_{X_1}[-1] \ar[r]  &  \frak E(r) \ar[r] &   W_\bull\Omega^r_{X_1,
  \log}[-r]  \ar[r]^-{[+1]} & \cdots \\
  p(r)\Omega^\bulle_{X_\bull}[-1] \ar[r] \ar[d] \ar[u]  & 
\frak{S}'_{ X_\bull}(r) \ar[r]^-{\Phi^J} \ar[d] \ar[u]_{(*)}  & W_\bull\Omega^r_{X_1, \log}[-r] \ar@{=}[u]
\ar[r]^-{[+1]} \ar@{=}[d] &       \cdots  \ar[d]  \ar[u]  \\ 
 p(r)\Omega^{< r}_{
X_\bull}[-1] \ar[r] & 
\frak{S}_{ X_\bull}(r) \ar[r]^-{\Phi^J} & W_\bull\Omega^r_{X_1, \log}[-r]
\ar[r]^-{[+1]}&       \cdots   
}
\]
where $\rm (*)$ is the composition of morphisms \eqref{con.chain}.
The upper triangle comes from the definition of $\frak E(r)$ as a cone and the
lower triangle is the fundamental triangle (Theorem \ref{syntF}).

\end{itemize}
\end{prop}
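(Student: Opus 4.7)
The plan is to derive both parts of the proposition from the chain of quasi-isomorphisms \eqref{con.chain}, from Proposition~\ref{qis}, and from the fundamental triangle (Theorem~\ref{syntF}).

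For part (1), the chain \eqref{con.chain} already provides a canonical isomorphism $\frak{S}'_{X_\bull}(r)_\et \xrightarrow{\sim} \frak E(r)$ in $\D_\pro(X_1)_\et$. Since by definition $\frak E(r) = \cone(W_\bull\Omega^r_{X_1,\log}[-r] \to q(r)W_\bull\Omega^\bulle_{X_1})[-1]$, it sits tautologically in the exact triangle
\[
q(r)W_\bull\Omega^\bulle_{X_1}[-1] \to \frak E(r) \to W_\bull\Omega^r_{X_1,\log}[-r] \xrightarrow{+1}.
\]
Transporting this triangle along the isomorphism $\frak E(r) \simeq \frak{S}'(r)_\et$ and replacing $q(r)W_\bull\Omega^\bulle_{X_1}$ by $p(r)\Omega^\bulle_{X_\bull}$ via Proposition~\ref{qis} yields the required triangle. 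The middle arrow is identified with $\Phi^J$ because $\Phi^J$ on $\frak{S}'(r)$ is by construction the restriction of $\Phi^J$ on $\frak{S}(r)$, and the chain \eqref{con.chain} intertwines this with the tautological map $\frak E(r) \to W_\bull\Omega^r_{X_1,\log}[-r]$.

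For part (2), the top row is the definitional triangle of $\frak E(r)$, the middle row is supplied by (1), and the bottom row is Theorem~\ref{syntF}. The upper vertical arrows, namely the isomorphism $(*)$ and $p(r)\Omega^\bulle_{X_\bull}[-1] \to q(r)W_\bull\Omega^\bulle_{X_1}[-1]$, are precisely the identifications used to prove (1), so the upper squares commute by construction. The lower vertical arrows are the canonical projection $\frak{S}'(r) \to \frak{S}(r)$ coming from Definition~\ref{Sprime} and the canonical truncation $p(r)\Omega^\bulle_{X_\bull} \to p(r)\Omega^{<r}_{X_\bull}$, together with the identity on the right column. Commutativity of the lower right square is immediate, since $\Phi^J$ on $\frak{S}'(r)$ is by definition the restriction of $\Phi^J$ on $\frak{S}(r)$.

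The principal work, and the main obstacle, is the commutativity of the lower left square: the connecting morphism of the middle row followed by truncation $p(r)\Omega^\bulle_{X_\bull} \to p(r)\Omega^{<r}_{X_\bull}$ must agree with the connecting morphism of the fundamental triangle. To verify this I would trace through the chain \eqref{con.chain}. Under the identifications there, the connecting morphism of the middle row is the composite $W_\bull\Omega^r_{X_1,\log}[-r] \hookrightarrow W_\bull\Omega^{\ge r}_{X_1} \subset q(r)W_\bull\Omega^\bulle_{X_1}$, which under Proposition~\ref{qis} corresponds to the natural inclusion of the tail $\Omega^{\ge r}_{X_\bull} \subset p(r)\Omega^\bulle_{X_\bull}$. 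Truncating onto $p(r)\Omega^{<r}_{X_\bull}$ therefore corresponds to the connecting map induced by the cone definition of $\frak{S}'(r)$, which in turn coincides with the connecting morphism of Theorem~\ref{syntF} via the octahedral compatibility among $\frak{S}'(r) \to \frak{S}(r) \to W_\bull\Omega^r_{X_1,\log}[-r]$. This reduces the remaining check to a functorial verification on the complexes $I(r)\Omega^\bulle_{D_\bull}$ and $J(r)\Omega^\bulle_{D_\bull}$, using the inclusion $J(r) \hookrightarrow I(r)$ compatible with the restriction maps of Proposition~\ref{qis}.
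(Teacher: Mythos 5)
Your route differs from the paper's, and the difference matters for where the work lands. The paper proves (1) by taking the homotopy fibre of the morphism of exact triangles whose source is the fundamental triangle (Theorem~\ref{syntF}) and whose target is the trivial triangle $\Omega^{\ge r}_{X_\bull}=\Omega^{\ge r}_{X_\bull}\to 0$, with middle vertical $\kappa(r)$. The fibre triangle has first term $\cone\big(p(r)\Omega^{<r}_{X_\bull}[-1]\to\Omega^{\ge r}_{X_\bull}\big)[-1]\simeq p(r)\Omega^\bulle_{X_\bull}[-1]$, and its middle arrow is automatically the composite $\frak{S}'_{X_\bull}(r)\to\frak{S}_{X_\bull}(r)\xrightarrow{\Phi^J}W_\bull\Omega^r_{X_1,\log}[-r]$, i.e.\ $\Phi^J$ by definition. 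This construction delivers the bottom half of diagram (2) for free (the morphism from the fibre triangle to the source triangle), and only the top half then needs to be read off \eqref{con.chain}. You instead prove (1) by transporting the defining cone triangle of $\frak E(r)$ along \eqref{con.chain}, which gives the top half of (2) for free, and turns the bottom half --- in particular the lower-left square --- into the ``principal work,'' as you rightly flag.

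That asymmetry is where your gaps lie. First, the claim that ``the chain \eqref{con.chain} intertwines [$\Phi^J$ on $\frak{S}'(r)$] with the tautological map $\frak E(r)\to W_\bull\Omega^r_{X_1,\log}[-r]$'' is asserted, not verified. It requires tracing the composite $\frak{S}'(r)\to\frak{S}(r)\to\frak{S}^I(r)\xrightarrow{\Phi^I}W_\bull\Omega^r_{X_1,\log}[-r]$ through the chain \eqref{syn.chainofmaps2} defining $\Phi^I$ and comparing term by term with maps (1)--(3) of \eqref{con.chain}; without this, (1) yields an exact triangle with the right terms but not yet with $\Phi^J$ as its middle arrow, and part (2) requires exactly that. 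Second, ``a functorial verification on the complexes $I(r)\Omega^\bulle_{D_\bull}$ and $J(r)\Omega^\bulle_{D_\bull}$'' and ``octahedral compatibility'' for the lower-left square are too vague to carry the argument: an appeal to the octahedron needs the specific octahedral diagram exhibited. The paper's fibre construction makes both of these checks disappear --- the morphism between the middle and bottom rows of (2) and the identification of the middle arrow are built in --- which is why the published proof is substantially shorter at exactly the spot you call the main obstacle.
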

\begin{proof} 

For (1) we take the homotopy fibre of
the morphism of exact triangles
\[
\xymatrix{
p(r)\Omega^{< r}_{
X_\bull}[-1] \ar[r] \ar[d]_{d} & 
\frak{S}_{ X_\bull}(r) \ar[r]^-{\Phi^J} \ar[d]_{\kappa(r)} & W_\bull\Omega^r_{X_1, \log}[-r]
\ar[r]^-{[+1]} \ar[d] &       \cdots \ar[d]  \\
\Omega^{\ge r}  \ar@{=}[r] & \Omega^{\ge r}  \ar[r] & 0 \ar[r]^-{[+1]}&       \cdots 
}
\]
where the upper triangle is the fundamental triangle (Theorem~\ref{syntF}). 

We get an exact triangle in $\D_\pro(X_1)_\et$
\ga{}{   \cone  (p(r)\Omega^{<r}_{
X_\bull}[-1]\to \Omega^{\ge r}_{X_\bull})[-1] \to 
\frak{S}'_{ X_\bull}(r) \xrightarrow{\Phi^J}  W_\bull\Omega^r_{X_1, \log}[-r]
\xrightarrow{[+1]}       \cdots                 \notag}
and note that $\cone  (p(r)\Omega^{<r}_{
X_\bull}[-1]\to \Omega^{\ge r}_{X_\bull})$ is quasi-isomorphic to $p(r)\Omega_{X_\bull}^\bulle$.

Part (2) follows immediately via the isomorphisms \eqref{con.chain}.

\end{proof}

Theorem \ref{comp} follows now from Proposition \ref{alpha} together with \eqref{con.chain}.


\section{The motivic complex} \label{motiviccomplex}

\noindent

The aim of this
section is to define a motivic pro-complex of the $p$-adic scheme $X_\bull$ as an object
in $\D_\pro (X_1)_\Nis$. 
We shall show in Section~\ref{hodgeobstruction} that liftability of the cycle class to a
cohomology class of this complex precisely computes the obstruction
for the refined crystalline cycle class to be Hodge.

\medskip

We recall the definition of Suslin-Voevodsky's  cycle complex on the smooth scheme $X/k$
for an arbitrary field $k$,
following \cite[Defn.~3.1]{SV}. It is defined as an object $\Z(r)$ in the
abelian
category of complexes of abelian sheaves on the  big Nisnevich site $
\Sm/k$. Furthermore, it is a complex of sheaves with transfers. One has
\ga{SV}{\Z(r)=\sC^\bulle(\Z_{tr}(\G_m^{\wedge   r}))[-r].}
We explain what this means: We think of  $\G_m=\A^1\setminus \{0\}$ as a scheme. By $\Z_{tr}(X)$
we denote the presheaf with transfers defined by the formula $\Z_{tr}(X)(U)= {\rm Cor}(U,X)$,
for any $X\in \Sm/k$, where ${\rm Cor}(U,X)$ is the free abelian group generated  by
closed integral subschemes  $Z\subset U\times_k X$ which are finite and
surjective over a component of $U$ (\cite[Section~1]{SV}). Wedge product is defined as
$\Z_{tr}(\G_m^{\widehat \ r })=\Z_{tr}(\G_m^{\times r })/{\rm im}({\rm faces}) $, where the faces are defined by 
$(x_1,\ldots, x_{r-1})\mapsto  (x_1,\ldots,1,\ldots x_{r-1}) $. Finally, for
any presheaf of abelian groups $\sF$ on $\Sm/k$, one defines the simplicial
presheaf $\sC_\bulle(\sF)$ by $\sC_i(\sF)(U)=\sF(U\times \Delta^i)$. One sets
$\sC^i(\sF)=\sC_{-i}(\sF)$. So in sum, one has 
\ga{}{   \Z(r)^i(U)={\rm  Cor}(U\times_k \Delta^{r-i},   \G_m^{\widehat \  r}). \notag
 }

Clearly $\Z(r)$ is supported in degrees $\le r$. Its last
Nisnevich  cohomology sheaf is the Milnor $K$-sheaf 
\ga{Moritz}{ \sH^r(\Z(r))=\sK^M_r .} 
We refer to
\cite[Thm.~3.4]{SV} where it is computed for fields, and in general,  one needs
the Gersten resolution for Milnor $K$-theory on smooth varieties, established in  \cite{EVM},\cite{Ke} and unpublished work of Gabber. Note that in case the base field $k$ is
finite one has to use a refined version of the usual Milnor $K$-sheaves, defined in
\cite{Ke2}. See also Section \ref{localcomputations} for more details about the Milnor
$K$-sheaf. 
The essential property of this refined Milnor $K$-sheaf that we need, is that
it is locally generated by symbols $\{a_1 , \ldots , a_r \} $ with 
$a_i \in \mathcal O_{X}^\times $ ($1\le i\le r$).

\smallskip

For $X\in \Sm_k$ we denote by $\Z_X(r)$ the restriction of $\Z(r)$ to the small Nisnevich
site of $X$.
One has  from  \cite[Cor~19.2]{MVW} and \cite[Thm.~1.1]{Ke}
\ga{CH}{ H^{2r}(X, \Z_{X}(r))=H^r(X, \sK^M_{ X,r})=\CH^r(X).       }

\medskip

{From} now on the notation is as in Section \ref{conmorph}. In particular $X_\bull/ W_\bull$
is in $\Sm_{W_\bull}$ and $X_1= X \otimes_W k$. We assume $r<p$.

 We will consider
$\Z_{X_1}(r)$ as an object in $D(X_1)=D(X_1)_\Nis$ and
also as a constant pro-complex in $\D_\pro(X_1)=\D_\pro(X_1)_\Nis$.
So \eqref{Moritz} enables us to define the map 
\eq{7.4a}{\log: \Z_{X_1}(r)\to \sH^r(\Z_{X_1}(r))[-r]=\sK^M_{X_1,r}[-r]
\xrightarrow{d\log [ \ ]} W_\bull\Omega^r_{X_1,\log}[-r] }
in $\D_\pro(X_1)$, 
where $[ \ ]$ is the Teichm\"uller lift. 

\medskip

Recall that one has a map $\Phi^J: \frak{S}_{X_\bull}(r)\to
W_\bull\Omega^r_{X_1,\log}[-r]$ in $\D_\pro(X_1)=\D_\pro(X_1)_\Nis$ (Theorem~\ref{syntF})
with $\frak{S}_{X_\bull}(r)$ defined in
Definition~\ref{S}.

\begin{defn} \label{Z(r)}
We assume $p>r$.  We define the {\it motivic pro-complex}  $\Z_{X_\bull}(r)$ of $X_\bull$ as an
object in 
 $ \D_\pro(X_1)$ by 
\ga{}{ \Z_{X_\bull} (r) =\cone (   \frak{S}_{X_\bull}(r) \oplus
\Z_{X_1}(r)
\xrightarrow{\Phi^J\oplus -\log } W_\bull \Omega^r_{X_1, \log} [-r]   )[-1].
\notag}
\end{defn}

Note that by Lemma \ref{hom.unicity}, the cone is well defined up to unique isomorphism in the
triangulated category $\D_\pro(X_1)$. In fact the map 
\begin{equation}\label{mot.mildrw}
\mathcal H^r (\Z_{X_1}(r) ) =
\mathcal K^M_{X_1,r} \to W_\bull \Omega^r_{X_1, \log}
\end{equation}
is an epimorphism, since 
$W_\bull \Omega^r_{X_1, \log}$ is generated by symbols.

\smallskip

\begin{prop} \label{mot.basicprop} \mbox{}
\begin{itemize}
\item[(0)] One has $\Z_{X_\bull}(0)=\Z$, the constant sheaf $\Z$ in degree $0$.
\item[(1)]
One has $\Z_{X_\bull}(1)=\mathbb G_{m, X_\bull} [-1]$.
\item[(2)]
 The motivic complex  $\Z_{X_\bull}(r)$ has support in cohomological degrees $\le r$. For
$r\ge 1$, if the
Beilinson-Soul\'e conjecture is true, it has support in cohomological degrees $[1 , r]$.
\item[(3)] One has $\Z_{X_\bull}(r) \otimes^L_\Z  \Z/p^\bull = \mathfrak S_{X_\bull}(r)$
  in $\D_\pro(X_1)$. 
\item[(4)] One has $\sH^r(\Z_{X_\bull}(r))= \sK_{X_\bull,r}^M$ in $\Sh_\pro (X_1)$.
\item[(5)] There is a canonical product structure 
\[
\Z_{X_\bull}(r) \otimes^L_\Z  \Z_{X_\bull}(r')  \to \Z_{X_\bull}(r+r')
\]
compatible with the products on $\Z_{X_1}(r)$ and $\frak{S}_{X_\bull}(r)$.
\end{itemize} 
\end{prop}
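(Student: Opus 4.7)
The plan is to exploit the defining triangle
\[
\Z_{X_\bull}(r) \to \frak{S}_{X_\bull}(r) \oplus \Z_{X_1}(r) \xrightarrow{\Phi^J \oplus -\log} W_\bull \Omega^r_{X_1,\log}[-r] \xrightarrow{[1]}
\]
together with the fundamental triangle (Theorem~\ref{syntF}), the known properties of the Suslin--Voevodsky motivic complex $\Z_{X_1}(r)$, and the surjectivity \eqref{mot.mildrw}, handling each assertion in turn.

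For (2), I examine the long exact sequence of cohomology sheaves. Theorem~\ref{syntF} places $\frak{S}_{X_\bull}(r)$ in degrees $[1,r]$, $\Z_{X_1}(r)$ in degrees $\le r$, and $W_\bull\Omega^r_{X_1,\log}[-r]$ in degree $r$, so the middle term of the triangle has vanishing cohomology in degrees $>r$, forcing $\sH^i(\Z_{X_\bull}(r))=0$ for $i>r$. Under Beilinson--Soul\'e one has $\sH^i(\Z_{X_1}(r))=0$ for $i\le 0$, and the same argument yields the refined support. For (3), I tensor the defining triangle with $\Z/p^\bull$. By Geisser--Levine, $\Z_{X_1}(r)\otimes^L\Z/p^\bull \simeq W_\bull\Omega^r_{X_1,\log}[-r]$, with the induced map matching $-\log$; since $\frak{S}_{X_\bull}(r)$ and $W_\bull\Omega^r_{X_1,\log}[-r]$ are already pro-$\Z/p^\bull$-linear, they are unchanged by $-\otimes^L\Z/p^\bull$, and the resulting diagonal $W_\bull\Omega^r_{X_1,\log}[-r]\xrightarrow{\rm id} W_\bull\Omega^r_{X_1,\log}[-r]$ cancels, yielding $\Z_{X_\bull}(r)\otimes^L\Z/p^\bull \simeq \frak{S}_{X_\bull}(r)$.

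For (0) and (1) I compute by hand. At $r=0$ the fundamental triangle degenerates (since $p(0)\Omega^{<0}_{X_\bull}=0$), giving $\frak{S}_{X_\bull}(0)\simeq \Z/p^\bull$, and the defining cone reduces to the kernel of $\Z/p^\bull \oplus \Z \to \Z/p^\bull$, namely $\Z$. For $r=1$ the fundamental triangle reads
\[
p\sO_{X_\bull}[-1] \to \frak{S}_{X_\bull}(1) \to W_\bull\Omega^1_{X_1,\log}[-1] \xrightarrow{[1]}
\]
and is compared with the Kummer triangle from \eqref{intro.1}--\eqref{intro.2},
\[
p\sO_{X_\bull}[-1] \to \G_{m,X_\bull}[-1] \to \G_{m,X_1}[-1] \xrightarrow{[1]},
\]
using $\Z_{X_1}(1)=\G_{m,X_1}[-1]$. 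The octahedral axiom applied to these two triangles and the map $\G_{m,X_1}[-1] \xrightarrow{d\log} W_\bull\Omega^1_{X_1,\log}[-1]$ identifies $\Z_{X_\bull}(1)$ of Definition~\ref{Z(r)} with $\G_{m,X_\bull}[-1]$.

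For (4), the long exact sequence in degree $r$ gives, using $\sH^{r-1}(W_\bull\Omega^r_{X_1,\log}[-r])=0$ and symbolic surjectivity,
\[
0 \to \sH^r(\Z_{X_\bull}(r)) \to \sH^r(\frak{S}_{X_\bull}(r)) \oplus \sK^M_{X_1,r} \to W_\bull\Omega^r_{X_1,\log} \to 0,
\]
and one identifies the kernel with $\sK^M_{X_\bull,r}$ via the symbol presentation of the pro-Milnor K sheaf (Section~\ref{localcomputations}) together with Kato's description of the top cohomology of $\frak{S}_{X_\bull}(r)$. For (5), the product is inherited from the Suslin--Voevodsky product on $\Z_{X_1}(r)$, Kato's product on $\frak{S}_{X_\bull}(r)$ from \cite{Ka2}, and Illusie's product on $W_\bull\Omega^\bulle_{X_1,\log}$; after checking compatibility of $\Phi^J$ and $-\log$ with these products, a product descends to the cone, and uniqueness of the cone (Lemma~\ref{hom.unicity}) makes it canonical. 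The main obstacle is (4): matching the abstractly defined pullback with the pro-Milnor K sheaf on the formal scheme requires the symbolic analysis and Kato's top-cohomology computation developed later in the paper.
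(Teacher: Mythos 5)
Your treatment of (0), (2), (3) and (5) follows the paper's argument almost verbatim: (0) by direct inspection of the cone in weight zero, (2) from the long exact sequence of cohomology sheaves plus the degree bounds from Theorem~\ref{syntF} and Beilinson--Soul\'e plus the surjectivity \eqref{mot.mildrw}, (3) by derived tensoring the defining cone with $\Z/p^\bull$ and invoking Geisser--Levine to collapse the $\Z_{X_1}(r)$-term, and (5) by reducing a product map into a cone to a pair of compatible maps into the two constituents, as in the uniqueness argument of Lemma~\ref{hom.unicity}. The one place where you deviate genuinely is (1). The paper derives it cheaply from (2) and (4): once one knows $\Z_{X_\bull}(1)$ is concentrated in degree $1$ with $\sH^1 = \sK^M_{X_\bull,1} = \G_{m,X_\bull}$, the identification is immediate. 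You instead try to compare Definition~\ref{Z(r)} directly with the Kummer triangle via ``the octahedral axiom.'' As stated this is incomplete: what is really needed is a map $\G_{m,X_\bull}[-1]\to\frak{S}_{X_\bull}(1)$ fitting into a commutative square over $W_\bull\Omega^1_{X_1,\log}[-1]$ and inducing the identity on the fibres $p\sO_{X_\bull}[-1]$ of both legs, so that the square is homotopy cartesian; and that map is not produced by the octahedral axiom but is precisely Kato's syntomic symbol in weight $1$. So your route for (1) secretly requires the same regulator input that the paper channels through (4), only packaged less efficiently.

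On (4) you correctly identify the needed exact sequence and the two inputs (the symbolic description of $\sK^M_{X_\bull,r}$ from Section~\ref{localcomputations} and Kato's top-cohomology computation for $\frak{S}_{X_\bull}(r)$), but you leave the actual comparison as a black box. The paper is more specific here, and the specificity matters: it extracts from Theorem~\ref{syntF} the short exact sequence
\[
0 \to p\Omega^{r-1}_{X_\bull}/p^2 d\Omega^{r-2}_{X_\bull} \to \sH^r(\frak{S}_{X_\bull}(r)) \xrightarrow{\Phi^J} W_\bull\Omega^r_{X_1,\log} \to 0 ,
\]
matches it against the parallel sequence
\[
0 \to p\Omega^{r-1}_{X_\bull}/p^2 d\Omega^{r-2}_{X_\bull} \to \sK^M_{X_\bull,r} \to \sK^M_{X_1,r} \to 0
\]
of Theorem~\ref{thm2.1} using Kato's syntomic regulator as the middle vertical arrow, and closes with the five lemma and \eqref{Moritz}. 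Without naming Theorem~\ref{thm2.1} and the five-lemma step your (4) is a statement of intent rather than a proof, and it is exactly the only substantive item in the proposition. I would therefore say: correct strategy throughout, with (0), (2), (3), (5) essentially identical to the paper, (1) taken by a heavier route that quietly uses the same weight-one syntomic symbol, and (4) gestured at but not actually carried out.
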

\begin{proof} We show (0). One has $W_\bull\Omega^0_{X_1,\log} =\Z/p^\bull$,
$\Z_{X_1}(0)=\Z$ and for example by Theorem~\ref{syntF}, one has
$\frak{S}_{X_\bull}(0)=\Z/p^\bull$. So (0) is clear from Definition~\ref{Z(r)}. 

\medskip

We show (2).
For all $i\in \Z$, one has a long exact sequence
$$\ldots \to \sH^i(\Z_{X_\bull}(r)) \to \sH^i(\frak{S}_{X_\bull}(r))\oplus
\sH^i(\Z_{X_1}(r))\to \sH^i(W_\bull\Omega^r_{X_1,\log}[-r])\to \ldots$$ 
By Theorem \ref{syntF} the syntomic complex $\mathfrak S_{X_\bull}(r) $
has support in degrees $[1,r]$ for $r \ge 1$. The Beilinson-Soul\'e conjecture
predicts the same for the motivic complex $\Z_{X_1}(r)$. So (2) follows because
\eqref{mot.mildrw} is an epimorphism.

\medskip

We show (4). One has an exact sequence 
$$0 \to \sH^r(\Z_{X_\bull}(r)) \to \sH^r(\frak{S}_{X_\bull}(r))\oplus
\sH^r(\Z_{X_1}(r))\xrightarrow{\Phi^J\oplus -\log } W_\bull\Omega^r_{X_1,\log}\to
0$$ 
By Theorem~\ref{syntF}, one has an exact sequence 
 $$0\to p\Omega^{r-1}_{X_\bull}/p^2d\Omega^{r-2}_{X_\bull}\to
\sH^r(\frak{S}_{X_\bull}(r)) \xrightarrow{\Phi^J} W_\bull\Omega^r_{X_1,\log}\to 0  $$
which induces the upper row in the commutative diagram with exact rows  (the bottom row is
Theorem \ref{thm2.1})  
\[
\xymatrix{
0\ar[r]  &  p\Omega^{r-1}_{X_\bull}/p^2d\Omega^{r-2}_{X_\bull}\ar[r]  \ar@{=}[d]  &
 \sH^r( \Z_{X_\bull}(r) )  \ar[r] &     \sH^r( \Z_{X_1}(r) )  \ar[r] & 0 \\ 
0\ar[r] &  p\Omega^{r-1}_{X_\bull}/p^2d\Omega^{r-2}_{X_\bull}\ar[r]  &  \sK_{
X_\bull,r}^M   \ar[r]  \ar[u]^{(*)}  &  \sK_{X_1,r}^M\ar[r] \ar[u]_{\wr}   & 0
}
\]
Here the arrow $\rm (*)$ is induced by Kato's syntomic regulator map 
\cite[Sec.\ 3]{Ka2}.
By \eqref{Moritz}, the right vertical arrow is an isomorphism, so by the five-lemma $\rm
(*)$ is also an isomorphism.

\medskip

{From} (4) and (2) one deduces (1), since the Beilinson-Soul\'e vanishing is clear for $r=1$.

\medskip

We show (3). The sheaf $W_n\Omega^r_{X_1,\log}$ is a sheaf of flat $\Z/p^n$-modules, so
$$W_\bull\Omega^r_{X_1,\log} \otimes_\Z^L \Z/p^\bull = W_\bull\Omega^r_{X_1,\log}  \quad
\text{ in } \quad \D_\pro(X_1).$$
By Theorem \ref{syntF} this also implies that $\mathfrak S_{X_\bull}(r) \otimes_\Z^L
\Z/p^\bull = \mathfrak S_{X_\bull}(r)  $. Geisser-Levine show that $\Z_{X_1}(r)
\otimes^L_\Z \Z/p^n = W_n\Omega^r_{X_1,\log}[-r] $, see \cite{GL}. So from the definition
of $\Z_{X_\bull}(r)$ we conclude that  $\Z_{X_\bull}(r) \otimes^L_\Z  \Z/p^\bull =
\mathfrak S_{X_\bull}(r)$.

We show (5). By a simple argument analogous to the proof of Lemma~\ref{hom.unicity} having
a product morphism as in (5) is equivalent to having two morphisms
\begin{align*}
\Z_{X_\bull}(r) \otimes^L_\Z  \Z_{X_\bull}(r') &\to \Z_{X_1}(r+r')\\
\Z_{X_\bull}(r) \otimes^L_\Z  \Z_{X_\bull}(r') &\to \frak{S}_{X_\bull}(r+r')
\end{align*}
in $D_\pro(X_1)$, which become equal when composing with the maps to $W_\bull
\Omega^{r+r'}_{X_1,\log}[-r]$. We let the two morphisms be induced by the usual product of the
Suslin-Voevodsky motivic complex and the product on the syntomic complex.
\end{proof}

\begin{prop}[Motivic fundamental triangle] \label{diag}
 One has a unique commutative diagram of exact triangles in $\D_\pro(X_1)$
\ga{}{\xymatrix{
\ar@{=}[d] p(r)\Omega^{<r}_{X_\bull}[-1] \ar[r]& \ar[d]  \Z_{X_\bull}(r)
\ar[r] &
\ar[d] \Z_{X_1}(r) \ar[d]^{d\log} \ar[r] & \cdots \ar[d]\\
p(r)\Omega^{<r}_{X_\bull}[-1] \ar[r] & \frak{S}_{X_\bull}(r) \ar[r]&
W_\bull\Omega^r_{X_1,\log}[-r]\ar[r] & \cdots
}
\notag} 
where the bottom exact triangle comes from Theorem~\ref{syntF} and the maps in the right square are the
canonical maps.
\end{prop}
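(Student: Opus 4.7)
The plan is to obtain the top distinguished triangle as the (rotated) fiber sequence of the canonical projection $p_2 \colon \Z_{X_\bull}(r) \to \Z_{X_1}(r)$, identifying its fiber with $p(r)\Omega^{<r}_{X_\bull}[-1]$ by a single application of the octahedral axiom to the defining triangle of $\Z_{X_\bull}(r)$ and the fundamental triangle of Theorem~\ref{syntF}.

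First, I would write down the defining distinguished triangle from Definition~\ref{Z(r)},
\ga{}{\Z_{X_\bull}(r) \xrightarrow{(p_1, p_2)} \frak{S}_{X_\bull}(r) \oplus \Z_{X_1}(r) \xrightarrow{\Phi^J - \log} W_\bull \Omega^r_{X_1, \log}[-r] \xrightarrow{[+1]} \cdots \notag}
in $\D_\pro(X_1)$, so that $p_2$ factors as $\pi_2 \circ (p_1, p_2)$ through the second projection $\pi_2$. Next I would apply the octahedral axiom to this composition, using the split triangle
\ga{}{\frak{S}_{X_\bull}(r) \xrightarrow{i_1} \frak{S}_{X_\bull}(r) \oplus \Z_{X_1}(r) \xrightarrow{\pi_2} \Z_{X_1}(r) \xrightarrow{0} \frak{S}_{X_\bull}(r)[1] \notag}
to compute the cone of $\pi_2$ as $\frak{S}_{X_\bull}(r)[1]$. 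This yields a distinguished triangle
\ga{}{W_\bull \Omega^r_{X_1, \log}[-r] \to \cone(p_2) \to \frak{S}_{X_\bull}(r)[1] \to W_\bull \Omega^r_{X_1, \log}[-r+1], \notag}
whose connecting morphism, by the explicit octahedral formula, is the composition $(\Phi^J - \log)[1] \circ i_1[1] = \Phi^J[1]$ (up to sign).

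Rotating this backwards to $\frak{S}_{X_\bull}(r) \xrightarrow{\Phi^J} W_\bull \Omega^r_{X_1, \log}[-r] \to \cone(p_2) \to \frak{S}_{X_\bull}(r)[1]$ and comparing with the (rotated) fundamental triangle of Theorem~\ref{syntF}, the axiom TR3 provides an isomorphism $\cone(p_2) \simeq p(r)\Omega^{<r}_{X_\bull}$ extending the identities on $\frak{S}_{X_\bull}(r)$ and $W_\bull \Omega^r_{X_1, \log}[-r]$. Rotating the $p_2$-triangle backwards then yields the top row of the desired diagram. The middle vertical arrow is $p_1$ by construction, the right one is $d\log = \log$ from \eqref{7.4a}, and the left one is the identity supplied by the octahedral comparison. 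Commutativity of the middle square reduces to $(\Phi^J - \log) \circ (p_1, p_2) = 0$, i.e.\ $\Phi^J \circ p_1 = \log \circ p_2$, while commutativity of the remaining two squares is built into the octahedral construction.

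The main technical point is the uniqueness assertion. I would handle it along the lines of Lemma~\ref{hom.unicity} (compare the argument for Proposition~\ref{mot.basicprop}(5)): the epimorphism $\sK^M_{X_1,r} = \sH^r(\Z_{X_1}(r)) \surj W_\bull \Omega^r_{X_1,\log}$, combined with the cohomological concentration of $W_\bull \Omega^r_{X_1,\log}[-r]$ in degree $r$, forces the ambiguity $\Hom$-groups controlling the TR3 completion to vanish. Consequently the isomorphism $\cone(p_2) \simeq p(r)\Omega^{<r}_{X_\bull}$ is canonical, and the entire diagram is pinned down uniquely by the requirement that its vertical maps be the prescribed canonical ones.
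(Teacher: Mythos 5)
Your proof is correct and is essentially the same argument the paper gives: the paper notes that the inner square is homotopy cartesian by the cone definition of $\Z_{X_\bull}(r)$ and cites Neeman's Lemma 1.4.4, which is precisely the packaged form of the octahedral computation you spell out (the cone of $p_2$ is identified with $p(r)\Omega^{<r}_{X_\bull}$ via TR3 against the fundamental triangle). Your uniqueness discussion likewise mirrors the paper's appeal to Lemma~\ref{hom.unicity}, using the epimorphism $\sK^M_{X_1,r}\surj W_\bull\Omega^r_{X_1,\log}$ and the concentration of $W_\bull\Omega^r_{X_1,\log}[-r]$ in degree $r$.
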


\begin{proof}
The square
\[
\xymatrix{
\ar[d]  \Z_{X_\bull}(r)
\ar[r] &
 \Z_{X_1}(r) \ar[d]^{d\log}\\
\frak{S}_{X_\bull}(r) \ar[r]&
W_\bull\Omega^r_{X_1,\log}[-r]
}
\]
is homotopy cartesian by definition. So the existence of the commutative diagram in the
proposition follows from \cite[Lemma 1.4.4]{Ne}.

For uniqueness one has to show that the morphism
\[
 p(r)\Omega^{\le r-1}_{X_\bull}[-1] \to  \Z_{X_\bull}(r)
\]
is uniquely defined by the requirements of the proposition. This can be shown analogously
with Lemma~\ref{hom.unicity}.

\end{proof}

\begin{cor}\label{projbuniso}
 For $Y_\bull = X_\bull \times \P^m$
one has a projective bundle isomorphism
\[
\bigoplus_{s=0}^m H^{r'-2s}_\cont(X_1, \Z_{X_\bull} (r-s) ) \xrightarrow{\oplus_s {\rm
    c}_1( \mathcal O(1))^s } H^{r'}_\cont(Y_1 , \Z_{Y_\bull}(r) )
\]
\end{cor}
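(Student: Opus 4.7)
The strategy is to reduce the claim to projective bundle formulas for the two outer terms of the motivic fundamental triangle (Proposition~\ref{diag}) and conclude by the five-lemma.

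Let $\pi \colon Y_\bull \to X_\bull$ denote the projection, and let $\xi := c_1(\sO(1)) \in H^2_{\cont}(Y_1, \Z_{Y_\bull}(1))$, which exists via the identification $\Z_{Y_\bull}(1) = \G_{m,Y_\bull}[-1]$ of Proposition~\ref{mot.basicprop}(1). Combining $\pi^*$ with the product of Proposition~\ref{mot.basicprop}(5), cup product with $\xi^s$ defines maps
\[
c_s \colon H^{r'-2s}_{\cont}(X_1,\Z_{X_\bull}(r-s)) \longrightarrow H^{r'}_{\cont}(Y_1,\Z_{Y_\bull}(r)),
\]
and the morphism in the statement is $\sum_{s=0}^m c_s$. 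Under the maps of Proposition~\ref{diag}, $\xi$ is sent to the usual first Chern class in $H^{2}(Y_1, \Z_{Y_1}(1))$ for Suslin--Voevodsky motivic cohomology and to the Hodge-theoretic first Chern class in $H^{1}_{\cont}(Y_1, p\,\sO_{Y_\bull})$. Accordingly, taking continuous cohomology of the two fundamental triangles for $X_\bull$ and $Y_\bull$ produces a ladder of long exact sequences in which the vertical arrows are the summed cup-products $\sum_s c_s$ on each of the three complexes $p(r)\Omega^{<r}_{X_\bull}[-1]$, $\Z_{X_\bull}(r)$ and $\Z_{X_1}(r)$. By the five-lemma, it suffices to prove the projective bundle isomorphism for $\Z_{X_1}(r)$ and for $p(r)\Omega^{<r}_{X_\bull}$ separately.

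For $\Z_{X_1}(r)$ this is the classical projective bundle formula for Suslin--Voevodsky motivic cohomology of the smooth $k$-scheme $X_1$, see \cite{MVW}. For the weight-twisted de Rham complex we must show
\[
\bigoplus_{s=0}^{\min(m,r-1)} H^{j-2s}_{\cont}\bigl(X_1, p(r-s)\Omega^{<r-s}_{X_\bull}\bigr) \xrightarrow{\;\oplus_s\, \xi^s \cup \;} H^{j}_{\cont}\bigl(Y_1, p(r)\Omega^{<r}_{Y_\bull}\bigr).
\]
At each finite level $n$ the classical Hodge projective bundle formula gives
\[
H^q(Y_n,\Omega^p_{Y_n}) \;\simeq\; \bigoplus_{u=0}^{\min(p,m)} H^{q-u}(X_n,\Omega^{p-u}_{X_n})\cdot \xi^u.
\]
Because $\xi$ has pure Hodge bidegree $(1,1)$, cupping with $\xi^u$ preserves the $p$-adic weight: it sends the weight-$p^{r-p}$ layer $p^{r-p}H^{q-u}(X_n,\Omega^{p-u}_{X_n})$ into the weight-$p^{r-p}$ layer of $H^q(Y_n,\Omega^p_{Y_n})$. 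Writing $p'=p-u$, this layer is exactly the weight-$p^{r-u-p'}$ piece of $H^{q-u}(X_n,\Omega^{p'}_{X_n})$, which is precisely what appears in degree $p'$ of $p(r-u)\Omega^{<r-u}_{X_n}$. Collecting the contributions by $u$ in the hypercohomology spectral sequence for $p(r)\Omega^{<r}_{Y_n}$ and reindexing by $(p',q') = (p-u,q-u)$, the $u$-th summand recovers the hypercohomology spectral sequence of $p(r-u)\Omega^{<r-u}_{X_n}$ shifted by $[-2u]$; the differentials match because multiplication by $\xi$ commutes with $d$. Passing to $\varprojlim_n$ preserves the isomorphism, since each cohomology group at level $n$ is a finitely generated $W_n$-module, so the relevant pro-systems are Mittag--Leffler.

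The main technical point is the weight-matching in the de Rham part: one must verify that cupping with $\xi^u$ interacts correctly with the divided $p$-filtration encoded in $p(\bulle)\Omega^{<\bulle}$. Granted Proposition~\ref{mot.basicprop}(1), (5) and the Hodge projective bundle formula, this compatibility is the only non-formal input; everything else is a direct five-lemma argument applied to the fundamental triangle.
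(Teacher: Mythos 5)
Your proposal is correct and takes essentially the same route as the paper: reduce via the motivic fundamental triangle (Proposition~\ref{diag}) to the projective bundle formulas for Suslin--Voevodsky motivic cohomology of $X_1$ and for Hodge cohomology, then conclude by the five-lemma. The paper simply cites \cite[Cor.~15.5]{MVW} and \cite[Exp.~XI, Thm.~1.1]{SGA7} for the two outer terms; you fill in the only non-obvious point, namely that cupping with $\xi^u$ is compatible with the $p$-divisibility twist in $p(\bulle)\Omega^{<\bulle}$, and your degree/weight bookkeeping ($p^{r-(i+u)}=p^{(r-u)-i}$) is right.
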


\begin{proof}
By Proposition~\ref{diag} one has to show that the analogous maps for Suslin-Voevodsky
motivic cohomology of $X_1$ and for
Hodge cohomology are isomorphisms. This holds by \cite[Cor.~15.5]{MVW} and \cite[Exp.~XI, Thm.~1.1]{SGA7}.
\end{proof}

\section{Crystalline  Hodge obstruction and  motivic
complex}  \label{hodgeobstruction}

\noindent
Let the notation be as in Section \ref{conmorph}.
We additionally assume in this section that $X_1/k$ is proper.

Our goal in this section is to study a cohomological deformation condition for a rational equivalence
class  $\xi_1 \in  \CH^r(X_1)=H^{2r}(X_1,\Z_{X_1}(r))$ to lift to a cohomology class
$\xi\in H^{2r}_\cont(X_1,\Z_{X_\bull}(r)) $, where $\Z_{X_\bull}(r)$ is the motivic
complex defined in Section~\ref{motiviccomplex}. In fact we suggest to interpret the latter
group as the codimension $r$ cohomological Chow group of the formal scheme $X_\bull$.
\begin{defn} \label{contChowgr}
We define the {\it continuous Chow group} of $X_\bull$ to be  
$$\CH^r_\cont(X_\bull) = H^{2r}_\cont(X_1,\Z_{X_\bull}(r)). $$ 
\end{defn}
For the definition of continuous cohomology see Definition~\ref{hoto.contcoho}.
The deformation problem can be understood by means of the fundamental exact triangle in
Proposition~\ref{diag}, which gives rise to the exact obstruction sequence
\begin{equation}\label{obs.seq}
\CH^r_\cont(X_\bull) \to \CH^r(X_1)  \xrightarrow{\rm Ob} H^{2r}_\cont(X_1,p(r)\Omega^{<r}_{X_\bull}).
\end{equation}
We will compare the obstruction ${\rm Ob}(\xi_1)$ to the cycle class of $\xi_1$ in
crystalline and de Rham cohomology.

\medskip

Note that by general homological algebra (formula \eqref{hom.lim1}) we have an exact sequence
\[
0 \to \varprojlim_n{\!}^1 H^{2r-1}(X_1, \Z_{X_n}(r) )\to \CH^r_\cont (X_\bull ) \to
 \varprojlim_n H^{2r}(X_1, \Z_{X_n}(r) ) \to 0.
\] 
In particular by Proposition~\ref{mot.basicprop}(1) and the vanishing of
$\varprojlim_n{\!\!\!\!\!\!}^1 H^0(X_1,\G_{m,X_n})$  we get an isomorphism
\begin{equation}\label{chowpic}
\CH^1_\cont (X_\bull) \xrightarrow{\sim} \varprojlim_n \Pic(X_n).
\end{equation}
Note that if $X_\bull$ is the $p$-adic formal scheme associated to the smooth projective
scheme $X/W$ there is an algebraization isomorphism \cite[Thm.~5.1.4]{EGA3}
\begin{equation} \label{picalg}
\Pic(X) \xrightarrow{\sim} \varprojlim_n \Pic(X_n).
\end{equation}
The relation of $\CH^r_\cont(X_\bull)$ to formal systems of vector bundles is explained in
Section~\ref{cherniso}. Unfortunately, an analog of the algebraization
isomorphism~\eqref{picalg} is unknown.

\medskip

We first recall the construction of the crystalline cycle class, as given by
Gros \cite[II.4]{G} and Milne \cite[Section~2]{Mi}, using the Gersten
resolution for $W_\bull\Omega^r_{X_1,\log}$  \cite[(0.1)]{GS} and the Gersten
resolution for the Milnor $K$-sheaf  $\sK_r^M$ \cite[Thm.~1.1]{Ke}. The morphism
$d\log \circ [ \ ]: \sK_{X_1,r}^M\to W_\bull\Omega^r_{X_1,\log} $  maps the Gersten
resolution for $\sK_{X_1,r}^M$ to the one for $W_\bull\Omega^r_{X_1,\log} $, where
$[ -
]$ is the Teichm\"uller lift. Thus, for any integral codimension $r$ subscheme
$Z\subset X_1$, one obtains as a consequence of purity 
\ga{}{\Z\cdot [Z]=H^r_Z(X_1, \sK^M_r)\xrightarrow{d \log   } 
\Z/p^\bull\cdot [Z]=H^r_Z(X_1, W_\bull\Omega^r_{X_1,\log}),\notag}
where the map $\Z\to \Z/p^n$ is just the projection.  
The image of $$1 \cdot [Z]\;\;\; \text{ in } \;\;\; H^r_\cont(X_1, W_\bull\Omega^r_{X_1,\log}),$$ after forgetting
supports, is the cycle class of $Z$.
By $\Z$-linear extension, Gros and Milne define the cycle class map
$$\CH^r(X_1)  \to H^r_\cont(X_1,W_\bull\Omega^r_{X_1,\log} ). $$
Also we observe that the cycle class map is induced, via the Bloch formula \cite{Ke}
\[
\CH^r(X_1) =H^r(X_1, \sK^M_r ),
\] 
 by the morphism of pro-sheaves $ \sK^M_{X,r} \to W_\bull\Omega^r_{X_1,\log} $.

On the other hand, one has a natural map of complexes 
\begin{equation} \label{crho.map1}
 W_\bull\Omega^r_{X_1,\log} [-r]\to W_\bull\Omega^{\ge r}_{X_1} \to q(r)
W_\bull\Omega^\bulle_{X_1}          
\end{equation}
 in $\C_\pro(X_1)$.

\begin{defn} \label{crystcycleclass}
 For $\xi\in \CH^r(X_1)$, its
{\it refined crystalline cycle class} is the class $$c(\xi) \in H^{2r}_\cont(X_1,
q(r)W_\bull\Omega^r_{X_1})$$
induced by \eqref{crho.map1}.

The {\it crystalline cycle class} of $\xi$ is the image $c_{{\rm
cris}}(\xi)$ of $c(\xi)$ in 
$H^{2r}_\cont(X_1, W_\bull\Omega^\bulle_{X_1})$.

\end{defn}

By abuse of notation we make the identifications
\begin{align*}
H^i_\cont(X_1,q(r)W_\bull\Omega^\bulle_{X_1} ) &=
H^i_\cont(X_1,p(r)\Omega^\bulle_{X_\bull} )\\
 H^i_\cont(X_1,W_\bull\Omega^\bulle_{X_1} ) &=
H^i_\cont(X_1,\Omega^\bulle_{X_\bull} )
\end{align*}
using the comparison isomorphism from \eqref{cris.diaco} and Proposition~\ref{qis}.

\begin{defns} \label{Hodge}\mbox{}
\begin{itemize}
\item[(1)] One says that the crystalline (resp. refined crystalline) cycle class
of $\xi$ is {\it
Hodge} if and only if $c_{{\cris}}(\xi)$ (resp. $c(\xi)$)
lies in the image of  $
H^{2r}_\cont(X_1, \Omega^{\ge r}_{X_\bull})$ in $H^{2r}_\cont(X_1,
\Omega^\bulle_{X_\bull})$ (resp. in   $H^{2r}_\cont(X_1,
p(r)\Omega^\bulle_{X_\bull})).$
\item[(2)] One says that  $c_{{\cris}}(\xi)$ is {\it
Hodge modulo torsion} if and only if $c_{{\cris}}(\xi)\otimes \Q$ lies
in the image of  $
H^{2r}_\cont(X_1, \Omega^{\ge r}_{X_\bull}) \otimes \Q$ in $H^{2r}_\cont(X_1,
\Omega^\bulle_{X_\bull})\otimes \Q $.
\end{itemize}
\end{defns}

\begin{rmks}  \label{littlermks}  \mbox{}
\begin{itemize}
\item[(1)] By the degeneration of the Hodge-de Rham spectral sequence modulo torsion, the map
$H^{2r}_\cont(X_1, \Omega^{\ge r}_{X_\bull}) \otimes \Q\to H^{2r}_\cont(X_1,
\Omega^\bulle_{X_\bull})\otimes \Q $ is injective.

\item[(2)] If $H^b_\cont(X_1, \Omega^a_{X_\bull})$ is a torsion-free $W(k)$-module for all
$a,b
\in \N$, then the composite map 
$$H^{2r}_\cont(X_1, \Omega^{\ge r}_{X_\bull})\to H^{2r}_\cont(X_1, p(r)\Omega^\bulle_{X_\bull})\to
H^{2r}_\cont(X_1, \Omega^\bulle_{X_\bull})$$ is injective, and thus the left map as well.
\item[(3)]
 The map $H^{2r}_\cont(X_1, p(r)\Omega^{\ge r}_{X_\bull}) \otimes \Q\to H^{2r}_\cont(X_1,
\Omega^\bulle_{X_\bull})\otimes \Q $ is an isomorphism.

\end{itemize}
\end{rmks}

\medskip

Now we formulate one of our main theorems:  
\begin{thm}\label{def.thm}
 Let $X_\bull/W_\bull$ be a smooth projective $p$-adic formal scheme. Let $\xi_1 \in
 \CH^r(X_1)$ be an algebraic cycle class.  Then
\begin{itemize}
\item[(1)] its refined crystalline class
$c(\xi_1)\in H^{2r}_\cont(X_1, q(r) W_\bull\Omega^\bulle_{X_1})$ is Hodge 
if and only if $\xi_1$ lies in the image of the restriction map
$
\CH_\cont^r(X_\bull) \to \CH^r(X_1),
$ 
\item[(2)]  its crystalline class $c_{{\rm cris}}(\xi_1)\in H^{2r}_\cont(X_1,
W_\bull\Omega^\bulle_{X_1})$ is Hodge modulo torsion if and only if $\xi_1\otimes
\Q$ lies in the image of the restriction map
$
\CH_\cont^r(X_\bull) \otimes \Q \to \CH^r(X_1)\otimes \Q.
$ 
\end{itemize} 
\end{thm}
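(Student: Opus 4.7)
The plan is to identify the obstruction map of \eqref{obs.seq} with the projection of the refined crystalline cycle class through the short exact sequence of pro-complexes
\[ 0 \to \Omega^{\ge r}_{X_\bull} \to p(r)\Omega^\bulle_{X_\bull} \to p(r)\Omega^{<r}_{X_\bull} \to 0, \]
and then deduce both statements from the associated long exact sequence.

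First I would use Proposition \ref{diag} to factor the motivic connecting morphism $\Z_{X_1}(r) \to p(r)\Omega^{<r}_{X_\bull}$ as the composition of the $d\log$ map from \eqref{7.4a} with the syntomic connecting morphism of Theorem \ref{syntF}. By Theorem \ref{comp}, the latter is the composition
\[ W_\bull\Omega^r_{X_1,\log}[-r] \to W_\bull\Omega^{\ge r}_{X_1} \to q(r)W_\bull\Omega^\bulle_{X_1} \simeq p(r)\Omega^\bulle_{X_\bull} \to p(r)\Omega^{<r}_{X_\bull}. \]
Under the Bloch isomorphism $\CH^r(X_1) = H^r(X_1,\sK^M_{X_1,r})$, the $d\log$ map sends $\xi_1$ on $H^{2r}$ to its Gros--Milne cycle class in $H^{2r}_\cont(X_1, W_\bull\Omega^r_{X_1,\log}[-r])$; composing further with the maps to $q(r)W_\bull\Omega^\bulle_{X_1}$ yields precisely $c(\xi_1)$ by Definition \ref{crystcycleclass}. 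Consequently $\mathrm{Ob}(\xi_1)$ is the image of $c(\xi_1) \in H^{2r}_\cont(X_1, p(r)\Omega^\bulle_{X_\bull})$ in $H^{2r}_\cont(X_1, p(r)\Omega^{<r}_{X_\bull})$ under the projection induced by the displayed short exact sequence.

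For part (1), the long exact sequence of continuous cohomology of that short exact sequence shows that $\mathrm{Ob}(\xi_1) = 0$ if and only if $c(\xi_1)$ lies in the image of $H^{2r}_\cont(X_1, \Omega^{\ge r}_{X_\bull})$, i.e.\ if and only if $c(\xi_1)$ is Hodge in the sense of Definition \ref{Hodge}. Exactness of the obstruction sequence \eqref{obs.seq} then gives the desired equivalence with liftability to $\CH^r_\cont(X_\bull)$.

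For part (2), I would tensor the entire analysis with $\Q$. Since $p$ becomes invertible, the inclusions $p(r)\Omega^\bulle_{X_\bull} \hookrightarrow \Omega^\bulle_{X_\bull}$ and $q(r)W_\bull\Omega^\bulle_{X_1} \hookrightarrow W_\bull\Omega^\bulle_{X_1}$ become quasi-isomorphisms after $\otimes\Q$; under the induced isomorphism on continuous cohomology, $c(\xi_1)\otimes\Q$ corresponds to $c_{\mathrm{cris}}(\xi_1)\otimes\Q$. Since moreover $p(r)\Omega^{\ge r}_{X_\bull} = \Omega^{\ge r}_{X_\bull}$ already integrally, the Hodge condition on $c(\xi_1)\otimes\Q$ matches the Hodge-modulo-torsion condition on $c_{\mathrm{cris}}(\xi_1)$, and (2) then follows by applying (1) after rationalization. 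The main substantive ingredient is Theorem \ref{comp}, which is already established; the remaining difficulty is bookkeeping to match the derived-category identification of the connecting map with the Gersten-level construction of the Gros--Milne cycle class, but this is standard once one observes that both arise from the same symbol map $\sK^M_{X_1,r} \to W_\bull\Omega^r_{X_1,\log}$.
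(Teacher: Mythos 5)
Your proposal is correct and follows essentially the same route as the paper: the paper's proof likewise assembles a commutative diagram whose top row is the obstruction sequence \eqref{obs.seq} and whose bottom row is the long exact sequence of the short exact sequence $0 \to \Omega^{\ge r}_{X_\bull} \to p(r)\Omega^\bulle_{X_\bull} \to p(r)\Omega^{<r}_{X_\bull} \to 0$, cites Theorem \ref{comp} for the commutativity of the right-hand square (which is precisely your identification of $\mathrm{Ob}$ with the projection of $c(\xi_1)$), and concludes by a diagram chase; part (2) is then deduced from part (1) using Remark \ref{littlermks}(3), which is your observation that $p(r)\Omega^\bulle_{X_\bull} \hookrightarrow \Omega^\bulle_{X_\bull}$ becomes a quasi-isomorphism after $\otimes\Q$. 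The only difference is that you unwind more explicitly how $d\log$, the Gros--Milne cycle class, and the connecting map of Proposition \ref{diag} interlock, which the paper leaves implicit in invoking Theorem \ref{comp}.
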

\begin{proof}
 The second part follows from the first one and Remark~\ref{littlermks}(3). 
For (1) we observe that we have a commutative diagram with exact rows, extending \eqref{obs.seq},
\[
\xymatrix{
\CH_\cont^r(X_\bull) \ar[r] \ar[d]_c & \CH^r(X_1)  \ar[r]^-{{\rm Ob}}  \ar[d]_c & H^{2r}_\cont
(X_1, p(r)\Omega_{X_\bull}^{<r} )  \ar@{=}[d]\\
 H^{2r}_\cont (p(r) \Omega^{\ge r}_{X_\bull} ) \ar[r]  & H^{2r}_\cont (p(r) \Omega^{\bulle}_{X_\bull} ) \ar[r] & H^{2r}_\cont (p(r) \Omega^{<r}_{X_\bull} ) 
}
\]
Indeed, the right square commutes by Theorem~\ref{comp}.
The theorem follows by a simple diagram chase.
\end{proof}

\begin{rmk}
For $r=1$ Theorem~\ref{def.thm} is due to Berthelot-Ogus \cite{BO2}, relying on a
construction of a complex similar to our $\frak S'_{X_\bull}(1)$ which was first studied in \cite[p.~124]{DelK3}. Note the
identification \eqref{chowpic} of $\CH^1_\cont(X_\bull)$ with the Picard group.
\end{rmk}

\section{Continuous $K$-theory and Chern classes} \label{chern}

\noindent 
The aim of this section is firstly to describe Quillen's $+$-construction and $\rm Q$-construction for $K$-theory of
the $p$-adic formal scheme $X_\bull$ in $\Sch_{W_\bull}$.
Secondly, we show 
$$\bigoplus_{r<p} H^{2r}_\cont(\BGL_{W_1} , \Z_{\BGL_{W_\bull}}(r)  )  = \Z[c_1,c_2,\ldots
]_{<2p} $$
where the right side is the degree smaller than $2p$ part of  polynomial ring in the
univeral Chern classes $c_r$. The latter have (cohomological) degree $2r$.
By pullback we get Chern classes in motivic cohomology for continuous
higher $K$-theory for smooth  $X_\bull$.

\smallskip

Let now $X_\bull $ be in $\Sch_{W_\bull}$. 
\begin{defn}\label{kth.def}
By $K_{X_\bull}\in \S_\pro(X_1)$ we denote the pro-system of simplicial presheaves given by
Quillen's $\rm Q$-construction. Explicitly, for $U_\bull \to X_\bull $ \'etale $K_{X_\bull}(U_1)$ is given by
\[
n \mapsto \Omega\, {\rm B}\, {\rm Q\, Vec} (U_n) \quad\quad (n\ge 1),
\]
where ${\rm Vec} (U_n)$ is the exact category of vector bundles on $U_n$, $\rm Q$ is
Quillen's $\rm Q$-construction functor and $\rm B$ is the classifying space functor, see
\cite[Sec.\ 5]{Srin}. 
\end{defn}

\begin{defn}
Continuous $K$-theory of $X_\bull$ in $\Sch_{W_\bull}$ is defined by
\[
K^\cont_i(X_\bull) = [S^i_{X_1} , K_{X_\bull}],
\]
where $S^i_{X_1}$ is the constant presheaf pro-system of the simplicial $i$-sphere in $\S_\pro(X_1)$.
\end{defn}

By \cite[Sec.~IX.3]{BoK} (see Proposition~\ref{hoto.expfib}) there is a short exact sequence
\[
0\to \varprojlim_n{\!}^1K_{i+1}(X_n) \to K^\cont_i(X_\bull) \to \varprojlim_n K_i(X_n) \to 0.
\]

Thomason-Throbaugh \cite[Sec.~10]{TT} show that $K_{X_\bull}$ satisfies Nisnevich descent.

\begin{prop}\label{kth.descent}
The $K$-theory presheaf of Definition~\ref{kth.def} satisfies Nisnevich descent in the
sense of Definition~\ref{hoto.descent}.
\end{prop}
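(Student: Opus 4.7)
The plan is to reduce to Thomason--Trobaugh's descent theorem level by level and then transfer the conclusion to the pro-system. Recall first that, as noted earlier in the paper, the small Nisnevich site of $X_1$ is canonically equivalent to that of $X_n$ for every $n \geq 1$, so $K_{X_\bull}$ is legitimately a pro-object of simplicial presheaves on the single site $X_{1,\Nis}$. For each fixed $n$, Thomason--Trobaugh \cite[Sec.~10]{TT} shows that $U_1 \mapsto \Omega\,\mathrm{B}\,\mathrm{Q}\,\mathrm{Vec}(U_n)$ satisfies Nisnevich descent on $X_{1,\Nis}$; concretely, for any elementary Nisnevich square (equivalently any bounded Nisnevich hypercover) the induced square of simplicial sets is homotopy cartesian.

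Next, I would verify that the notion of descent in Definition~\ref{hoto.descent} reduces to descent at each finite level. In the pro-model structure set up in Appendix~\ref{sec.hoto}, a map of pro-simplicial presheaves is by construction a weak equivalence (resp.\ fibration) when it is one strictly or essentially levelwise. Consequently a pro-simplicial presheaf $\{F_n\}$ satisfies descent precisely when each $F_n$ does, together with compatibility of the transition maps with the descent squares. In our situation the transition maps $K_{X_{n+1}}\to K_{X_n}$ are induced by the pullback along the closed immersion $X_n \hookrightarrow X_{n+1}$, and hence automatically commute with restriction along Nisnevich morphisms. Combining this with Thomason--Trobaugh at each level yields descent for $K_{X_\bull}$.

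The only delicate point, which I would treat as the main technical step, is the passage from levelwise descent for the simplicial presheaves $K_{X_n}$ to the appropriate notion of descent for the pro-object in $\S_\pro(X_1)$. One has to check that the pro-system of homotopy cartesian squares (one for each $n$) is itself homotopy cartesian in the pro-category; equivalently, that the derived pullback and derived $\varprojlim$ interact correctly. This is formal once one has the pro-model structure with weak equivalences defined strictly levelwise, which is exactly the framework introduced in Appendix~\ref{sec.hoto}. No new input beyond Thomason--Trobaugh and this formal comparison is required.
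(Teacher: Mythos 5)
Your proposal is correct and follows the same route as the paper, which in fact offers no explicit proof but simply invokes Thomason--Trobaugh; your main addition is to spell out the formal passage from levelwise Nisnevich descent (at each $X_n$) to descent of the pro-object, which is exactly the implicit content given the strict levelwise model structure on $\S_\pro(X_1)$ from Appendix~\ref{sec.hoto}. One small caveat shared with the paper itself: Thomason--Trobaugh prove descent for the nonconnective spectrum $K^B$ rather than for the connective $\Omega\,\mathrm{B}\,\mathrm{Q}\,\mathrm{Vec}$ directly, but this discrepancy affects only the range $t-s<0$, which is excluded from the Bousfield--Kan spectral sequence \eqref{kth.desceq} anyway.
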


In particular from Lemma~\ref{hoto.descentBK} we get a Bousfield-Kan descent spectral sequence
\begin{equation}\label{kth.desceq}
E^{s,t}_2 = H^{s}_\cont (X_1, \sK_{X_\bull,t} ) \Longrightarrow   K_{t-s}^\cont (X_\bull)
\quad \quad t \ge s .
\end{equation}
where $\sK_{X_\bull,t}$ is the pro-system of Nisnevich sheaves of homotopy groups of $K_{X_\bull}$.

Our aim in the rest of this section is to construct a Chern character from continuous
$K$-theory to continuous motivic cohomology.
\begin{defn}
By $\BGL_{m,R}$  ($m\ge 1$)  we denote the simplicial classifying scheme
\[
\xymatrix{
 \cdots & GL_{m,R} \times GR_{m,R}  \ar@<10pt>[r] \ar@<6pt>[r] \ar@<2pt>[r]  &  GL_{m,R} \ar@<6pt>[r]
 \ar@<2pt>[r] \ar@<2pt>[l] \ar@<6pt>[l] &  \{ * \} \ar@<2pt>[l]
}
\]
of the general linear group
over the base ring $R$. By $\BGL_R$ we denote the ind-simplicial scheme 
\[
\cdots  \to\BGL_{m,R} \to \BGL_{m+1,R}  \to \BGL_{m+2,R} \to \cdots 
\] 
\end{defn}

In the usual way one can associate to $\BGL_R$ its small \'etale and Nisnevich sites,
denoted by $\BGL_{R,\et}$ and $\BGL_R=\BGL_{R,\Nis}$. 

The following facts are well known to the experts:
\begin{itemize}
\item[(a)]
There is a canonical isomorphism
\begin{equation}
\bigoplus_r H^{2r}( \BGL_k , \Z_{\BGL_k}(r)  ) = \Z [ c_1,c_2 ,\ldots ],
\end{equation}
where the $c_i$ are Chern classes of the universal bundle on $\BGL_{n,k}$  of cohomoloical
degree $2i$, see \cite[Lem.~7]{Push}.
\item[(b)]
There is a canonical isomorphism
\begin{equation}
\bigoplus_r H^{r}_\cont( \BGL_k ,\oplus_t \Omega_{\BGL_{W_\bull}}^t[-t]  ) = W [ c_1,c_2 ,\ldots ],
\end{equation}
where the $c_i$ are Chern classes of the universal bundle on $\BGL_{n,k}$  of cohomoloical
bi-degree $(r,t)=(2i,i)$, see Thm.~1.4 and Rmk.~3.6 of \cite{G}.
\end{itemize}

{From} the Hodge-de Rham spectral sequence and (b) we deduce that 
\begin{align*}
H^{2r-1}_\cont (\BGL_k ,p(r) \Omega^{<r}_{BGL_{W_\bull}}) &= 0,\\
H^{2r}_\cont (\BGL_k , p(r)\Omega^{<r}_{BGL_{W_\bull}}) &= 0.
\end{align*}
By the fundamental triangle in Proposition~\ref{diag} this implies that
\[
\bigoplus_{r<p} H^{2r}_\cont( \BGL_k , \Z_{\BGL_{W_\bull}}(r)  ) \xrightarrow{\sim}  \bigoplus_{r<p} H^{2r}( \BGL_k , \Z_{\BGL_k}(r)  )
\]
is an isomorphism. We conclude:
\begin{prop}\label{kth.bgl}
There is a canonical isomorphism of graded groups
\[
\bigoplus_{r<p}  H^{2r}_\cont(\BGL_{W_1} , \Z_{\BGL_{W_\bull}}(r)  )  = \Z[c_1,c_2,\ldots ]_{<2p},
\]
where the universal Chern classes $c_i$ live in cohomological degree $2i$. The index $2p$
on the right side means that we take only sums of monomials of degree less than $2p$. 
\end{prop}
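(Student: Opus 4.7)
The plan is to apply the motivic fundamental triangle of Proposition~\ref{diag} to the ind-simplicial classifying scheme $\BGL_{W_\bull}$, reducing the claim to the computation of $H^{2r}(\BGL_k,\Z_{\BGL_k}(r))$ from fact (a) plus a vanishing statement for the Hodge-type flanking terms coming from (b). Concretely, for each $r<p$ the triangle
\[
p(r)\Omega^{<r}_{\BGL_{W_\bull}}[-1] \to \Z_{\BGL_{W_\bull}}(r) \to \Z_{\BGL_k}(r) \xrightarrow{[1]} \cdots
\]
in $\D_\pro(\BGL_k)$ gives, after taking continuous cohomology, the long exact sequence
\[
\cdots \to H^{2r-1}_\cont(\BGL_k,p(r)\Omega^{<r}_{\BGL_{W_\bull}}) \to H^{2r}_\cont(\BGL_k,\Z_{\BGL_{W_\bull}}(r)) \to H^{2r}(\BGL_k,\Z_{\BGL_k}(r)) \to H^{2r}_\cont(\BGL_k,p(r)\Omega^{<r}_{\BGL_{W_\bull}}) \to \cdots
\]
so it suffices to show that both flanking groups vanish.

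The key step is this vanishing, which I would obtain from the Hodge-to-de~Rham spectral sequence
\[
E_1^{s,t}=H^s_\cont(\BGL_k,\,p^{\max(r-t,0)}\Omega^t_{\BGL_{W_\bull}}) \Longrightarrow H^{s+t}_\cont(\BGL_k,p(r)\Omega^{<r}_{\BGL_{W_\bull}}),
\]
where only $0\le t\le r-1$ contributes. By fact (b) the pro-Hodge cohomology of $\BGL_{W_\bull}$ is $W[c_1,c_2,\ldots]$ concentrated in bi-degrees $(s,t)=(i,i)$, so $E_1^{s,t}$ can only be non-zero when $s=t$. Looking for contributions to total degree $2r-1$ or $2r$ with $t<r$ and $s=t$ forces $2t=2r-1$ (impossible) or $2t=2r$, i.e.\ $t=r$ (excluded by $t<r$). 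Hence both $H^{2r-1}_\cont$ and $H^{2r}_\cont$ of $p(r)\Omega^{<r}_{\BGL_{W_\bull}}$ vanish.

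Combining this vanishing with fact (a), the long exact sequence collapses to an isomorphism
\[
H^{2r}_\cont(\BGL_{W_1},\Z_{\BGL_{W_\bull}}(r)) \xrightarrow{\sim} H^{2r}(\BGL_k,\Z_{\BGL_k}(r)) = \Z[c_1,c_2,\ldots]_{2r}
\]
(the degree-$2r$ part), and summing over $r<p$ yields the claimed identification with $\Z[c_1,c_2,\ldots]_{<2p}$. The cutoff at $2p$ is forced by the standing hypothesis $r<p$ in the construction of $\Z_{X_\bull}(r)$. The only place where I would need to be slightly careful is the passage to continuous cohomology of a pro-system when invoking the Hodge-de~Rham degeneration; this is unproblematic here because the relevant cohomology groups of $\BGL_{n,W_n}$ are finitely generated $W_n$-modules in each bidegree (by (b) applied level-wise), so the $\varprojlim{\!}^1$ terms that enter the definition of continuous cohomology contribute only in the bi-degrees where (b) already localises the calculation.
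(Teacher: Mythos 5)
Your argument is correct and follows essentially the same route as the paper: both use the motivic fundamental triangle (Proposition~\ref{diag}), deduce the vanishing of $H^{2r-1}_\cont$ and $H^{2r}_\cont$ of $p(r)\Omega^{<r}_{\BGL_{W_\bull}}$ from fact (b) via the Hodge--de~Rham spectral sequence (the bidegree concentration at $(2i,i)$ forces the flanking terms to vanish since $2t=2r-1$ is impossible and $2t=2r$ would require $t=r$), and then conclude from fact (a). The only implicit point, which the paper also leaves unstated, is that the cohomology of $p^{j}\Omega^t_{\BGL_{W_\bull}}$ inherits the same bidegree concentration as $\Omega^t_{\BGL_{W_\bull}}$; this holds because the pro-systems are isomorphic up to reindexing.
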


\medskip

By the construction of Gillet \cite{Gil} the universal Chern class $c_r$ of
Proposition~\ref{kth.bgl} leads to a morphism $${\rm c}_r\in [\BGL_{X_\bull} , \rK
\,\Z_{X_\bull}(r)[2r] ]$$
in the homotopy category ${\rm hS}_\pro(X_1)$, see Notation~\ref{hoto.homcat}. 
Here $\rK$ stands for the Eilenberg-MacLane functor of Proposition~\ref{hoto.adj} and
$\BGL_{X_\bull}$ is the natural pro-system of presheaves of simplicial sets on
$X_{1,\Nis}$ given on $U_n\to X_n$ \'etale by $\varinjlim_m\BGL_{W_n,m}(U_n) $.    
By Proposition~\ref{kth.descent} and a functorial version of Quillen's $+=\rm Q$ theorem (see the
proof of Prop.~2.15 of
\cite{Gil}) there is a canonical isomorphism $$K_{X_\bull} \cong\Z\times \Z_\infty
\BGL_{X_\bull} $$
in ${\rm hS}_\pro(X_1)$, where
$\Z_\infty$ is the Bousfield-Kan $\Z$-completion functor \cite{BoK}. Completion therefore induces a map
\[
[\BGL_{X_\bull} , \rK \,\Z_{X_\bull}(r)[2r] ] \to [K_{X_\bull} , \rK \,\Z_{X_\bull}(r)[2r] ] 
\]
and for $r<p$ we get continuous Chern class maps
\begin{equation}
{\rm c}_r : K_i^\cont(X_\bull) \to H^{2r-i}_\cont (X_1, \Z_{X_\bull}(r) ),
\end{equation}
which are group homomorphisms for $i>0$ and satisfy the Whitney formula for $i=0$. 

The degree $r$ part of the universal Chern character is a universal polynomial $\ch_r\in
\Z[1/r!] [c_1 , \dots ]$.
As above by pullback we get Chern characters
\begin{equation}
{\rm ch}_r : K^\cont_i (X_\bull) \to H^{2r-i}_\cont (X_1 , \Z_{X_\bull}(r))_{
  \Z[\frac{1}{r!}]}, 
\end{equation}
which are additive and compatible with product. The lower index $\Z[\frac{1}{r!}]$ stands for
$ - \otimes_\Z \Z[\frac{1}{r!}]$.
 Note that the canonical morphism
\[
H^{2r-i}_\cont (X_1, \Z_{X_\bull}(r) )_{\Z[ \frac{1}{r!}] } \xrightarrow{\sim}   H^{2r-i}_\cont (X_1 , \Z[\frac{1}{r!}]_{X_\bull}(r))
\]
is an isomorphisms for $r<p$, as follows from Proposition~\ref{diag}.

\section{Results from topological cyclic homology} \label{topcyc}

\noindent
We summarize some deep results about $K$-theory which are proved using the theory of topological
cyclic homology, due to
McCarthy, Madsen, Hesselholt, Geisser and others. Note that we state results not in their general
form, but in a form sufficient for our application.

In this section we work in \'etale topology only, i.e.\ all sheaves and cohomology groups are
in \'etale topology. The prime $p$ is always assumed to be odd.

Let $R$ be a discrete valuation ring, finite flat over $W$ and write $R_n = R/(p^n)$.
Let $X$ be in $\Sm_R$ and $X_\bull$ be the associated $p$-adic
formal scheme in $\Sm_{R_\bull}$, i.e.\ $X_n=X\otimes_R R_n$. Denote by $i:X_\red \hookrightarrow X$
the immersion of the reduced closed
fibre and by $j:X_K \to X$ the immersion of the general fibre, $K={\rm frac}( R)$.
Using the arithmetic square \cite[Sec.~VI.8]{BoK} and the theorems of McCarthy \cite{Mc} and Goodwillie \cite{Good}, Geisser-Hesselholt \cite[Thm.~A]{GH1} deduce results about integral $K$-theory in
the relative affine situation $X_\red \hookrightarrow X_n$. 
Combining their result with Thomason's Zariski descent for $K$-theory,
Proposition \ref{kth.descent}, in order to reduce to affine $X_n$ and \'etale decent
for topological cyclic homology \cite[Cor.~3.3.3]{GH2} we get: 

\begin{prop}\label{topcyc.desc} \mbox{}
\begin{itemize}
\item[(a)] The relative $K$-groups $K_s(X_n,X_{\red})$ are $p$-primary torsion of finite
  exponent for any $n\ge 1$, $s\ge 0$.
\item[(b)] The presheaf of simplicial sets $K_{X_n,X_\red}$ on the small \'etale site of $X_\red$
  satisfies \'etale descent, see Definition~\ref{hoto.descent}.
\end{itemize}
\end{prop}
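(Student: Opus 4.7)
Both parts follow from the comparison, due to McCarthy, Goodwillie and Geisser--Hesselholt, between relative algebraic $K$-theory of nilpotent thickenings and relative topological cyclic homology. The overall strategy is to reduce (a) to an affine statement covered by \cite[Thm~A]{GH1}, and to derive (b) by transporting \'etale descent for $TC$ along the cyclotomic trace, using (a) to guarantee that relative $K$ is already $p$-complete.

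\emph{Part (a).} I would first reduce to the affine case. Since $X$ is quasi-projective over $R$, it admits a finite affine Zariski cover $\{U^{(i)}\}$. Thomason--Trobaugh Zariski descent (Proposition~\ref{kth.descent}) produces a convergent spectral sequence of \v{C}ech type assembling $K_s(X_n, X_\red)$ in finitely many steps from the relative $K$-groups $K_*(U^{(i_1)\cdots i_j}_n, U^{(i_1)\cdots i_j}_\red)$ of finite intersections of affine opens; bounded $p$-primary exponent for these affine pieces then forces $K_s(X_n, X_\red)$ to have bounded exponent as well. In the affine case $X_n = \Spec(A_n)$, the ideal of $X_\red$ in $X_n$ is nilpotent because a uniformizer of $R$ becomes nilpotent in $R_n$, so the statement is exactly \cite[Thm~A]{GH1}: McCarthy's theorem makes the relative cyclotomic trace a $p$-adic equivalence and Goodwillie's theorem pins down the rational part via relative cyclic homology, and the combined output is the required bounded $p$-torsion.

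\emph{Part (b).} Having (a) in hand, the presheaf of spectra $K(-_n, -_\red)$ on $X_{\red,\et}$ is $p$-primary torsion of bounded exponent, hence it is already $p$-complete. McCarthy's theorem therefore upgrades the cyclotomic trace to a pointwise equivalence of presheaves of spectra
\begin{equation*}
K(-_n, -_\red) \xrightarrow{\sim} TC(-_n, -_\red)
\end{equation*}
on the small \'etale site $X_{\red,\et}$. Geisser--Hesselholt \cite[Cor.~3.3.3]{GH2} prove that $TC$ satisfies \'etale descent, and descent is preserved by taking the homotopy fiber of $TC(-_n) \to TC(-_\red)$; transporting across the displayed equivalence yields \'etale descent for the relative $K$-theory presheaf of spectra, from which passage to the associated infinite loop space gives \'etale descent for $K_{X_n, X_\red}$ in the sense of Definition~\ref{hoto.descent}.

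\emph{Main obstacle.} The delicate point is not the formal manipulation but matching the hypotheses of \cite[Thm~A]{GH1} and \cite[Cor.~3.3.3]{GH2} to our situation over a possibly ramified discrete valuation ring $R$: these results are stated in a local/affine setting and one must check that the nilpotent thickening $X_\red \inj X_n$ falls within their range of applicability (smoothness and a uniform bound on the nilpotency index of the defining ideal). A secondary technical point is the upgrade from spectrum-level descent to the simplicial-set-level descent required by Definition~\ref{hoto.descent}, which is handled by passing to connective covers once the analogue for $K_{X_\red}$ is in place via Thomason--Trobaugh.
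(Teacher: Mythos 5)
Your proof takes essentially the same approach as the paper, which only sketches the argument in one sentence: reduce (a) to the affine case via Thomason--Trobaugh Zariski descent and invoke Geisser--Hesselholt's Theorem~A, then obtain (b) by transporting \'etale descent for topological cyclic homology along the cyclotomic trace using McCarthy and Goodwillie. Your reorganization of (b) -- using (a) to see that relative $K$-theory is already $p$-complete, so that McCarthy's theorem gives an integral equivalence without invoking the arithmetic square explicitly -- is a clean way to arrange the same underlying ingredients and does not constitute a genuinely different route.
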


Generalizing the work of Suslin and Panin, Geisser-Hesselholt \cite{GH3} obtain the
following continuity result for
$K$-theory with $\Z/p$-coefficients.
Let $(\sK/p)_{X,s}$ be the sheafification in the \'etale topology of $X$ of $K$-groups with $\Z/p$-coefficients and
let similarly $(\sK/p)_{X_\bull,s}$ be the pro-system of $K$-sheaves of the schemes $X_n$ on the \'etale site of $X_\red$.

\begin{prop}\label{topcyc.cont}
The restriction map induces an isomorphism of pro-systems of \'etale sheaves on $X_\red$
\[
i^* (\sK/p)_{X,s}  \xrightarrow{\sim} (\sK /p)_{X_\bull,s}.
\]
\end{prop}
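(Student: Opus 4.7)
The plan is to verify the asserted isomorphism of pro-sheaves stalkwise on $X_{\red,\et}$. Fix a geometric point $\bar x\to X_\red$. Since the small étale sites of $X_n$ are all identified with that of $X_\red$, the stalks are computed by strict henselizations: writing $A=\sO^{sh}_{X,\bar x}$, one has $(i^*\sK/p)_{X,s,\bar x}=K_s(A;\Z/p)$ and $(\sK/p)_{X_\bull,s,\bar x}=\{K_s(A/p^nA;\Z/p)\}_n$ as a pro-abelian group. Thus the proposition reduces to the continuity statement that
\[
K_s(A;\Z/p)\xrightarrow{\sim}\{K_s(A/p^nA;\Z/p)\}_n
\]
is an isomorphism in the category of pro-abelian groups, for the noetherian henselian local ring $A$ with residue characteristic $p$ arising as the strict henselization of a smooth $R$-scheme at a point of its closed fibre.

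For the next step I would reduce the problem from $K$-theory to topological cyclic homology using the theorem of Dundas--Goodwillie--McCarthy (whose relevant special case was already invoked in Proposition~\ref{topcyc.desc} through \cite{GH1}): the cyclotomic trace induces an equivalence of relative theories
\[
K(A,p^nA;\Z/p)\xrightarrow{\sim}TC(A,p^nA;\Z/p)
\]
because $p^nA$ is nilpotent in $A/p^mA$ for $m\ge n$ and the spectra are taken with $\Z/p$-coefficients. Passing to homotopy groups and using the long exact sequences, continuity for $\{K_s(A/p^nA;\Z/p)\}_n$ relative to $K_s(A;\Z/p)$ is equivalent to continuity for the corresponding pro-system built from $TC$ with $\Z/p$-coefficients.

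Continuity for $TC(-;\Z/p)$ reduces via the fixed-point spectral sequences to continuity for topological Hochschild homology $THH(-;\Z/p)$ and for the cyclotomic Frobenius and Verschiebung maps. For a noetherian ring which is henselian along $(p)$, the classical rigidity theorem of Suslin--Gabber for mod-$p$ $K$-theory (extended to the setting at hand by Panin) reduces the comparison between $A$ and its $p$-adic completion $\widehat A$ to the same statement for $\widehat A$; and for the $p$-adically complete noetherian ring $\widehat A$, continuity of $THH$ with $\Z/p$-coefficients follows from the fact that $\widehat A=\lim_n A/p^nA$ and that $THH$ commutes with appropriate limits in this setting, a Mittag-Leffler argument yielding the pro-isomorphism.

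The main obstacle will be the careful handling of the two-step reduction: first, passing from the henselian local ring $A$ to its $p$-adic completion $\widehat A$, which requires $p$-adic rigidity of mod-$p$ $K$-theory for henselian pairs; and second, establishing the pro-isomorphism $THH(\widehat A;\Z/p)\simeq\{THH(A/p^nA;\Z/p)\}_n$ together with its compatibility with the cyclotomic structure, so that the comparison survives the homotopy fixed-point construction producing $TC$. Both ingredients are precisely what Geisser--Hesselholt establish in \cite{GH3}, so the proof essentially consists in combining these inputs with the stalkwise reduction above; no further geometric input from the smoothness of $X$ is needed beyond the noetherianness and henselian-ness of the local rings $\sO^{sh}_{X,\bar x}$.
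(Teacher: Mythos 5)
The paper does not give an argument for this proposition; it is attributed entirely to Geisser--Hesselholt's continuity theorem \cite{GH3}, preceded only by the one-line attribution ``Generalizing the work of Suslin and Panin, Geisser--Hesselholt [GH3] obtain the following continuity result.'' Your first paragraph --- reducing to stalks on $X_{\red,\et}$ and identifying them with $K$-groups of the strict henselization $A=\sO^{sh}_{X,\bar x}$ and of its successive truncations $A/p^nA$ --- is the correct and expected way to unpack a sheaf-level statement into a ring-level one, and both you and the paper ultimately defer the ring-level statement to \cite{GH3}. To that extent the approaches coincide.

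However, your second and third paragraphs contain two substantive misstatements. First, the Dundas--Goodwillie--McCarthy theorem, as used in Proposition~\ref{topcyc.desc} via \cite{GH1}, applies to a \emph{nilpotent} ideal. You invoke it for the relative theory $K(A,p^nA;\Z/p)$, but $p^nA$ is not nilpotent in $A$ (indeed $A$ is regular). The justification ``because $p^nA$ is nilpotent in $A/p^mA$'' only gives $K(A/p^m,p^nA/p^mA;\Z/p)\simeq TC(A/p^m,p^nA/p^mA;\Z/p)$; passing from there to the non-truncated relative theory requires precisely the pro-continuity you are trying to establish, so as written the step is circular. The version of DGM that does apply to a henselian or complete pair $(A,pA)$ is itself the nontrivial content of \cite{GH3} (or, in modern generality, Clausen--Mathew--Morrow), not a consequence of the nilpotent case.

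Second, the rigidity theorem of Suslin--Gabber (and Panin's variant for complete discrete valuation rings) concerns $K$-theory with $\Z/\ell$-coefficients for $\ell$ \emph{invertible} in the ring, which is the opposite of the situation here ($\ell=p$ not invertible). That result cannot be used to compare $K(A;\Z/p)$ and $K(\widehat A;\Z/p)$. The correct input for passing from a henselian to a complete local ring in the $p$-adic setting is again a theorem of Geisser--Hesselholt, proved via $TC$ and the continuity of topological Hochschild homology for Noetherian $F$-finite rings, not classical rigidity. So while you correctly identify \cite{GH3} as where the hard work lies, the two intermediate reductions you sketch --- DGM for a non-nilpotent ideal, and $p$-adic rigidity via Suslin--Gabber --- are not available as stated and should be replaced by the actual statements of \cite{GH3}.
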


Note that one also has a continuity isomorphism 
\begin{equation}
i^* \G_{m,X} \otimes^L_\Z \Z/p  \xrightarrow{\sim} \G_{m,X_\bull}  \otimes^L_\Z \Z/p
\end{equation}
in $D_\pro(X_\red)_\et$.

\medskip

In the rest of this section we study the relation of $K$-theory to a form of $p$-adic
vanishing cycles. 

\begin{defn}
We define 
\[
\frak{V}_X(r) =\cone ( \tau_{\le r}\,  R\, j_* \Z/p (r) \xrightarrow{{\rm res}} i_*\Omega^{r-1}_{X_\red,\log}[-r] )[-1],
\]
where $\rm res$ is the residue map of Bloch-Kato \cite[Thm.~1.4]{BK}.
\end{defn}

Note that the cone in the definition is unique up to unique isomorphism by Lemma~\ref{hom.unicity}.
There is a canonical product structure
\begin{equation}\label{sec.topcyc.prod}
\frak{V}_X(r_1) \otimes^L_{\Z/p}  \frak{V}_X(r_2)  \to \frak{V}_X(r_1+r_2).
\end{equation}

\smallskip

\begin{lem}\label{topcyc.gmiso}
The symbol map induces an isomorphism
\[
\G_{m,X} \otimes^L_\Z \Z/p [-1] \xrightarrow{\sim} \frak V_X(1)
\]
in $\D(X)_\et$.
\end{lem}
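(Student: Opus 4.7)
The plan is to verify the isomorphism at the level of cohomology sheaves on both sides, and to produce the comparison as a natural consequence of Kummer theory on $X_K$ together with the divisor sequence on $X$.

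On the left, $\G_{m,X}\otimes^L_\Z \Z/p\,[-1]$ is computed from the length-two free resolution $\Z\xrightarrow{p}\Z\to\Z/p$: it has $H^0=\mu_{p,X}$, $H^1=\G_{m,X}/p\,\G_{m,X}$, and vanishes in all other degrees. On the right, $\frak V_X(1)$ is the fiber of $\tau_{\le 1}Rj_*\mu_p\to i_*(\Z/p)[-1]$ (noting $\Omega^0_{X_{\red},\log}=\Z/p$). From the long exact sequence of its defining cone, I immediately read off $H^0(\frak V_X(1))=j_*\mu_{p,X_K}=\mu_{p,X}$ (finite \'etale coefficients extend trivially), and $H^{\ge 2}$ of the source vanishes by truncation; so the only remaining input is a computation of $R^1j_*\mu_p$ together with its residue map.

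This is exactly the content of Bloch--Kato \cite[Thm.~1.4]{BK} in weight $r=1$, which in this low-weight case reduces to classical Kummer theory on $X_K$ combined with the divisor exact sequence $0\to \G_{m,X}\to j_*\G_{m,X_K}\to i_*\Z\to 0$. Indeed, tensoring the divisor sequence derivedly with $\Z/p$, and noting that $R^1j_*\G_m=0$ Nisnevich-locally on a regular scheme whose regular Cartier divisor has been removed, yields the short exact sequence
$$0 \longrightarrow \G_{m,X}/p \longrightarrow R^1 j_*\mu_p \xrightarrow{\mathrm{res}} i_*\Z/p \longrightarrow 0,$$
which simultaneously forces surjectivity of the residue (so $H^2(\frak V_X(1))=0$) and identifies its kernel with $\G_{m,X}/p$. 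Combined with the degree-zero calculation, the cohomology sheaves on the two sides agree.

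To upgrade this to an isomorphism in $\D(X)_\et$ I construct the comparison morphism explicitly. Kummer theory on $X_K$ gives $\mu_{p,X_K}\simeq \G_{m,X_K}\otimes^L\Z/p\,[-1]$; applying $\otimes^L\Z/p\,[-1]$ to the adjunction unit $\G_{m,X}\to j_*\G_{m,X_K}$ then produces a canonical symbol map $\G_{m,X}\otimes^L\Z/p\,[-1]\to \tau_{\le 1}Rj_*\mu_p$. Its composition with the residue is zero, since an invertible function on $X$ has vanishing order along $X_{\red}$, so the symbol map factors through $\frak V_X(1)$; by construction this factored map induces the identifications above on cohomology. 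The main (and essentially only) obstacle is the compatibility of the Bloch--Kato residue with the divisor valuation $j_*\G_{m,X_K}\twoheadrightarrow i_*\Z$ under the Kummer identification $j_*\G_{m,X_K}/p\cong R^1j_*\mu_p$; this is built into the construction in \cite{BK}, but must be unpacked to conclude. Everything else is bookkeeping with truncations and long exact sequences.
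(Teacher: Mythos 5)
Your proposal is correct and rests on the same two ingredients as the paper's proof: the divisor sequence $0\to\G_{m,X}\to j_*\G_{m,X_K}\to i_*\Z\to 0$ and the Kummer identification $j_*\G_{m,X_K}\otimes^L\Z/p\simeq\tau_{\le 1}Rj_*\Z/p(1)[1]$ (equivalently $j_*\G_{m,X_K}/p\simeq R^1j_*\mu_p$, using $R^1j_*\G_m=0$). The paper simply derived-tensors the divisor triangle with $\Z/p$ and substitutes the Kummer isomorphism into the middle term, reading off the triangle that exhibits $\G_{m,X}\otimes^L\Z/p[-1]$ as the fibre defining $\frak V_X(1)$; you instead first verify that the cohomology sheaves on both sides agree and then separately construct a comparison map via the adjunction unit $\G_{m,X}\to j_*\G_{m,X_K}$. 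The two-step route is slightly more circuitous but equivalent. Two small points of hygiene: the justification $j_*\mu_{p,X_K}=\mu_{p,X}$ is not because $\mu_p$ is locally constant on $X$ (it isn't, as $p$ is not invertible), but because $\sO_X$ is $p$-torsion free and $X$ is normal, so a $p$-th root of unity in $\sO(U_K)$ already lies in $\sO(U)$; and the vanishing $R^1j_*\G_m=0$ should be phrased in the \'etale topology where the lemma lives (the argument — stalks are $\Pic$ of a localization of a regular local ring, hence a UFD, hence zero — works verbatim there). You also correctly flag the need to check that the connecting map induced from the divisor sequence coincides with the Bloch--Kato residue; the paper leaves this implicit as well.
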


\begin{proof}
We have a short exact sequence of \'etale sheaves 
\[
0\to  \G_{m,X} \to j_* \G_{m, X_K} \to i_* \Z  \to 0.
\]
Forming the derived tensor product of the associated exact triangle in $\D(X)_\et$ with
$\Z/p$ and using the isomorphism
\[
j_* \G_{m, X_K} \otimes^L_\Z \Z/p  \xrightarrow{\sim}  \tau_{\le 1} R\, j_* \Z/p (1),
\]
we finish the proof of the lemma.
\end{proof}

Assume that $R$ contains a primitive $p$-th root of unity.
We have the following chain of isomorphisms of pro-systems of \'etale sheaves on $X_\red$:
\begin{equation}\label{topcyc.compiso}
i^*(\sK /p )_{X,s} \xrightarrow{\rm tr} i^*(\mathcal{TC}^\bull /p )_{X,s}
\xrightarrow{(*)} \bigoplus_{r\le s } i^* \sH^{2r-s}(\frak V_X(r)).
\end{equation}
Here $\rm tr$ is the B\"okstedt-Hsiang-Madsen trace \cite{BHM} from the \'etale $K$-sheaf
to the \'etale pro-sheaf of topological cyclic homology.
The map $\rm tr$ is an isomorphism by \cite[Thm.~B]{GH3}.
The isomorphism $(*)$ is the composite of isomorphisms induced by
\cite[Thm.~E]{HM} and \cite[Thm.~A]{GH4}.

Fix a primitive $p$-th root of unity $\zeta$ in $R$.
Recall that the Bott element $$\beta \in K_2(\Z_p [\zeta];\Z/p)$$ is the unique element which maps to
$\{\zeta\} \in K_1(\Z_p [\zeta];\Z/p)$ under the Bockstein. Uniqueness of this
Bott element follows from Moore's theorem \cite[App.]{Mil}, which says that
\[
K_2(\Z_p [\zeta ] )  = \Z/p \oplus (\text{divisible}) .
\]

 The Bott element  
\begin{equation}\label{topcyc.h0}
\beta \in H^0( \Spec
W[\zeta] ,\frak V(1)) \xleftarrow{\sim}  \ker (\G_m( W[\zeta]) \xrightarrow{p} \G_m( W[\zeta])  ) =  \zeta^\Z
\end{equation}
is by definition the element induced by $\zeta$, where the first isomorphism in  \eqref{topcyc.h0} is coming from Lemma~\ref{topcyc.gmiso}.

The composite isomorphism \eqref{topcyc.compiso} can be uniquely characterized as follows:
\begin{prop}\label{topcyc.isoprop}
If $R$ contains the $p$-th roots of unity there is a unique morphism
\[
i^*(\sK /p )_{X,s}  \xrightarrow{\sim}  \bigoplus_{r\le s } i^* \sH^{2r-s}(\frak V_X(r))
\]
of \'etale sheaves
mapping the local section $\beta^t \{ a_1 ,\ldots ,a_{s-2t} \} $ in $K$-theory  to the
 local section  $\beta^t \{ a_1 ,\ldots ,a_{s-2t} \} $ in $p$-adic vanishing cycles. Here
 $a_u$ ($1\le u\le s-2t$) are local sections of $i^* j_* \sO_{X_K}^\times$
for $t>0$ and  local sections of  $i^*  \sO_{X}^\times$ for $t=0$. This morphism is an isomorphism.
\end{prop}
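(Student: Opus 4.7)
The plan is to use the composite map of \eqref{topcyc.compiso} as the candidate morphism, verify that it takes symbols to symbols, and then deduce uniqueness from the observation that both source and target are \'etale-locally generated by the same symbols.

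First, the candidate morphism is the composite isomorphism
\[
i^*(\sK/p)_{X,s} \xrightarrow{\mathrm{tr}} i^*(\mathcal{TC}^\bull/p)_{X,s} \xrightarrow{(*)} \bigoplus_{r\le s} i^*\sH^{2r-s}(\frak V_X(r)),
\]
whose bijectivity is granted by \cite[Thm.~B]{GH3}, \cite[Thm.~E]{HM} and \cite[Thm.~A]{GH4}. To establish existence of a morphism with the stated behaviour on symbols, it therefore suffices to check the composite acts as claimed. Both $\mathrm{tr}$ and $(*)$ are multiplicative (and compatible with the product \eqref{sec.topcyc.prod}), so I would reduce to two kinds of generators: the Bott element $\beta \in K_2(R;\Z/p)$ and a unit symbol $\{a\}\in K_1$. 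For $\beta$: the image equals the vanishing-cycle Bott element of \eqref{topcyc.h0}, since by construction both are characterised through the same primitive root of unity $\zeta$ via the Bockstein and the symbol isomorphism of Lemma~\ref{topcyc.gmiso}. For a unit $a$: the composite sends $\{a\}\in K_1$ to the corresponding $d\log$-type symbol in $\frak V_X(1)$, which reduces to the identification of the trace in degree one with the canonical map into $\G_m$ after \cite[Thm.~A]{GH4}. By multiplicativity, $\beta^t\{a_1,\ldots,a_{s-2t}\}$ maps to the symbol of the same name.

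For uniqueness, I would establish that both sheaves are \'etale-locally generated by these symbols. By the Bloch-Kato symbol generation \cite[Thm.~1.4]{BK}, the \'etale sheaves $i^*R^uj_*\mu_p^{\otimes u}$ are locally generated by symbols of units. Combining this with the residue triangle
\[
\frak V_X(r) \to \tau_{\le r}Rj_*\Z/p(r) \xrightarrow{\mathrm{res}} i_*\Omega^{r-1}_{X_\red,\log}[-r]
\]
and multiplication by powers of the Bott element $\beta\in H^0(\frak V_X(1))$ (well-defined via \eqref{sec.topcyc.prod}), one sees that $\bigoplus_{r\le s}i^*\sH^{2r-s}(\frak V_X(r))$ is \'etale-locally generated by the symbols $\beta^t\{a_1,\ldots,a_{s-2t}\}$ of the stated form. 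Since the candidate morphism is an isomorphism of sheaves, the source $i^*(\sK/p)_{X,s}$ is then also locally generated by the same symbols, so a sheaf morphism is determined by its values on them.

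The main obstacle is the symbol computation in the second paragraph, i.e.\ checking that the deep identification of the cyclotomic trace with $p$-adic vanishing cycles coming from \cite[Thm.~E]{HM} and \cite[Thm.~A]{GH4} really does send the $K$-theoretic Bott symbol to the element of \eqref{topcyc.h0}. This ultimately rests on the Hesselholt-Madsen calculation of $\mathrm{TC}$ in terms of the de Rham-Witt complex and its compatibility with the Bockstein; granting these inputs, the symbol identification and the resulting uniqueness are formal.
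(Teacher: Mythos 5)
Your proposal takes essentially the same route as the paper: the candidate morphism is the composite \eqref{topcyc.compiso}, its compatibility with products (via \cite[Sec.~6]{GH2} for the trace and the multiplicativity of the Hesselholt--Madsen and Geisser--Hesselholt comparison maps) gives the stated behaviour on $\beta^t\{a_1,\ldots,a_{s-2t}\}$, and Bloch--Kato symbol generation \cite[Thm.~1.4]{BK} of the target then forces uniqueness. The paper's own proof is a terser version of precisely this argument; you have simply made explicit the reduction to the two degree-one and degree-two generators $\{a\}$ and $\beta$ and the pullback of the generation statement to the source, which is the content left implicit in the original.
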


\begin{proof}
The local sections  $\beta^t \{ a_1 ,\ldots ,a_{s-2t} \} $ on both sides are well-defined
by means of the product structure on $K$-theory  and the
product structure \eqref{sec.topcyc.prod} on $p$-adic vanishing cycles. In order to deduce
the proposition one has to note that the isomorphism constructed above is compatible with products
and that the target ring of the isomorphism is generated by the above Bott-type symbols \cite[Thm.~1.4]{BK}.
In fact the B\"okstedt-Hsiang-Madsen trace is compatible with products.
This is shown in \cite[Sec.~6]{GH2}.
\end{proof}

\section{Chern character isomorphism}\label{cherniso}

\noindent
In this section we show that under suitable hypotheses our Chern character from continuous
$K$-theory to continuous motivic cohomology of a smooth $p$-adic formal scheme is an
isomorphism. Using descent we firstly reduce it to an \'etale local problem with
$\Z/p$-coefficients. Secondly, we use the fact, Proposition~\ref{topcyc.isoprop}, that
there is some \'etale local isomorphism, which we show is the same as our Chern character.

\medskip

Consider a smooth $p$-adic formal scheme $X_\bull\in \Sm_{W_\bull}$ and let $d=\dim(X_1)$. The continuous $K$-group
$K_0^\cont (X_\bull)$ was defined in Section~\ref{chern}, as well as the Chern character
map to continuous motivic cohomology. 

\begin{thm} \label{ciso.isothm}
For $p>d+6$ the Chern character
\[
\ch : K^\cont_0(X_\bull)_\Q  \to \bigoplus_{r\le d} \CH_\cont^r(X_\bull)_\Q
\]
is an isomorphism.
\end{thm}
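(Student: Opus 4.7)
The strategy is the one announced in the introduction: use descent on both sides to reduce Theorem~\ref{ciso.isothm} to a sheaf-level statement on the small Nisnevich site of $X_1$, and then identify the relevant sheaves \'etale-locally via the topological cyclic homology results of Section~\ref{topcyc}. On the $K$-theory side I would use Nisnevich descent (Proposition~\ref{kth.descent}) with the resulting Bousfield--Kan spectral sequence \eqref{kth.desceq}. On the motivic side I would use the hypercohomology spectral sequences of the pro-complexes $\Z_{X_\bull}(r)$. Gillet's Chern character, obtained from the universal classes of Proposition~\ref{kth.bgl} as a morphism $K_{X_\bull}\to\prod_r\rK\,\Z_{X_\bull}(r)[2r]$ in $\rm hS_\pro(X_1)$, induces a map of these spectral sequences.

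After tensoring with $\Q$, the Adams operations act on both sides (on $K^\cont_*$ through the $+$-construction, and on $\CH^*_\cont$ through multiplication by powers of the universal line bundle) and split everything into weight eigenspaces; the Chern character is then the projection onto the graded pieces. Thus it suffices to show that the sheaf-level Chern character
\[
 \ch:\,(\sK_{X_\bull,t})_\Q\;\longrightarrow\;\bigoplus_{r<p}\sH^{2r-t}(\Z_{X_\bull}(r))_\Q
\]
is an isomorphism of pro-systems of Nisnevich sheaves on $X_1$ in the range of $(s,t)$ contributing to $K_0^\cont(X_\bull)_\Q$. The restriction $p>d+6$ ensures that the only Chern classes entering are those with $r\le d<p$, so that $\Z_{X_\bull}(r)$ is defined, and leaves enough slack to accommodate the degrees $s\le 2d$ occurring in the descent spectral sequence.

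Both sides of the displayed arrow are derived $p$-complete pro-sheaves (the transition maps in the pro-system $\sK_{X_\bull,t}$ being $p$-adically nilpotent on relative $K$-groups by Proposition~\ref{topcyc.desc}(a), and similarly for $\Z_{X_\bull}(r)$ by construction), so by a Nakayama-type argument it suffices to prove the analogous statement after $\otimes^L\Z/p$. By Proposition~\ref{mot.basicprop}(3) the target becomes the syntomic sheaves $\sH^{2r-t}(\frak S_{X_\bull}(r))$, and by the fundamental triangle (Theorem~\ref{syntF}) these are built from $W_\bull\Omega^r_{X_1,\log}[-r]$ and $p(r)\Omega^{<r}_{X_\bull}[-1]$. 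To compute the left-hand side, work in an \'etale neighbourhood $U_1\to X_1$ where $U_\bull$ lifts smoothly to $U/R$ for some discrete valuation ring $R$ finite flat over $W$ containing a primitive $p$-th root of unity. The continuity theorem of Geisser--Hesselholt (Proposition~\ref{topcyc.cont}) identifies $(\sK/p)_{U_\bull,*}$ with $i^*(\sK/p)_{U,*}$, and Proposition~\ref{topcyc.isoprop} identifies this with $\bigoplus_r i^*\sH^{2r-*}(\frak V_U(r))$; the latter is canonically isomorphic to the syntomic sheaves for $r<p$ via the Kato--Kurihara comparison between $p$-adic vanishing cycles and syntomic cohomology. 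The change of topology from \'etale to Nisnevich is handled by the truncation $\tau_{\le r}R\epsilon_*$ appearing in the Nisnevich definition of $\frak S_{X_\bull}(r)$ together with Proposition~\ref{kato.thm}.

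The main obstacle is identifying Gillet's Chern character, which is defined through universal cohomology classes of the classifying scheme $\BGL$, with the B\"okstedt--Hsiang--Madsen trace underlying Proposition~\ref{topcyc.isoprop}. Here I would use the uniqueness in Proposition~\ref{topcyc.isoprop}: both maps are multiplicative and determined by their values on symbols $\beta^t\{a_1,\dots,a_{s-2t}\}$, so it is enough to check agreement in the universal case of the first Chern class of a line bundle, which reduces (via Lemma~\ref{topcyc.gmiso} and Proposition~\ref{mot.basicprop}(1)) to the compatibility of the syntomic regulator on $\G_m$ with the symbol map $d\log\circ[-]$. Once the sheaf-level isomorphism is established, the descent spectral sequences degenerate rationally on $E_2$-pages that agree under $\ch$, which yields the theorem for $K_0^\cont$ (and, with the same proof, the higher $K$-theory statement of Theorem~\ref{higherK}).
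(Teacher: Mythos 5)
Your plan runs into two genuine problems before it reaches the easy part.

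The first is the reduction to $\Z/p$ coefficients. You claim that "both sides of the displayed arrow are derived $p$-complete pro-sheaves," and then invoke a Nakayama-type argument to reduce the rational sheaf-level statement to its mod-$p$ version. This is not true for the absolute pro-sheaves $\sK_{X_\bull,t}$ (nor for $\sH^*(\Z_{X_\bull}(r))$). Proposition~\ref{topcyc.desc}(a) gives $p$-completeness only for the \emph{relative} sheaves $\sK_{X_\bull,X_1,t}$; the closed-fibre contribution $\sK_{X_1,t}$ -- that is, the $K$-theory of a smooth variety over a perfect field of characteristic $p$ -- is very far from $p$-complete (already $K_2$ of a function field has a large uniquely divisible summand). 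Consequently the Nakayama argument simply does not apply to $\ch:(\sK_{X_\bull,t})_\Q\to\bigoplus_r\sH^{2r-t}(\Z_{X_\bull}(r))_\Q$ directly. The paper avoids this by splitting the exact sequence of relative theories \eqref{ci.relseq}: the closed-fibre terms are compared using Bloch's rational result \cite[Thm.~9.1]{Bl} on $K_i(Y_1)_\Q\to\bigoplus_r H^{2r-i}(\Z_{Y_1}(r))_\Q$, while the relative terms \emph{are} $p$-primary of finite exponent (Proposition~\ref{topcyc.desc}(a)) and only there is the d\'evissage to $\Z/p$ legitimate. Your proposal has no substitute for Bloch's theorem and cannot treat the closed-fibre piece.

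The second problem is the spectral-sequence input on the $K_0$ line. The Bousfield--Kan descent spectral sequence \eqref{kth.desceq} is fringed along $t=s$, which is exactly the range computing $K_0^\cont$; the map on $E_2$-terms in that range does not automatically give an isomorphism on abutments. The paper circumvents this with the delooping trick $Y_\bull=X_\bull\times\G_m$ and the retraction $\partial\circ\{T\}=\mathrm{id}$, reducing the $K_0^\cont$ statement to one about $K_1^\cont(Y_\bull)$, where the fringe effect is avoided (cf.\ Lemma~\ref{hoto.desciso}). Your proposal works directly with $K_0^\cont(X_\bull)$ and never addresses the fringe; it also does not explain how to get injectivity on the boundary term $K_0(Y_\bull,Y_1)\to\bigoplus H_{\cont}^{2r-1}(p(r)\Omega^{<r}_{Y_\bull})$, which is needed in the diagram chase. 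Apart from these two points, your local identification -- continuity, Proposition~\ref{topcyc.isoprop}, matching Gillet's universal Chern classes with the B\"okstedt--Hsiang--Madsen trace via multiplicativity and the Bott element -- is essentially the same as the paper's treatment of Claim~\ref{claim.locmain}, including the appeal to Kurihara's and Sato's comparison results.
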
 

Note that we have $\CH_\cont^r(X_\bull)=0$ for $r> d$ by Proposition~\ref{diag} and the
fact that
\[
\varprojlim_n\!\!^1 H^*(X_1, p(r)\Omega^{<r}_{X_n}) =0,
\]
because it is a pro-system of $W_n$-modules of finite length and therefore a
Mittag-Leffler pro-system.

 there is no
$\lim\!\!^1$-contribution to continuous Hodge cohomology. Indeed, the Hodge cohomology
group $H^*(X_n,\Omega^*_{X_n})$ is a $W_n$-module of finite length and so the pro-system
is Mittag-Leffler.

\begin{proof}
For $r +1< p$ we have a commutative diagram
\[
\xymatrix{
K_{1}^\cont( Y.)_\Q  \ar[r]^-{\ch_r} \ar@<1ex>[d]^{\partial}  &   H_{ \cont }^{2r+1} (Y_1 , \Z_{Y.}(r+1) )_\Q  \ar@<1ex>[d]^{\partial} \\
K_{0}^\cont( X.)_\Q  \ar[r]_-{\ch_r}  \ar[u]^{\{ T\}} &   H_{ \cont}^{2r}(X_1 , \Z_{X.}(r) )_\Q \ar[u]^{\{ T\}}
}
\]
where $Y_\bull=X_\bull \times \G_{m}$ and $T$ is a torus parameter. The maps $\partial$ in
the diagram are constructed in the standard way by the projective bundle formula for
$X_\bull \times \P^1$ and the
Mayer-Vietoris exact sequence, see Corollary~\ref{projbuniso} and \cite[Sec.~6]{TT}.
With the appropriate sign convention we get $\partial \circ \{ T\}  = \rm id$.

By the diagram it suffices to show that
\[
\ch:K_{1 }^\cont(Y.) _ \Q  \to \bigoplus_{r\le d+2} H_{\cont}^{2r-1}(Y_1 , \Z_{Y.}(r) )_\Q .
\]
is an isomorphism.

The Chern character induces a morphism on relative theories and so we obtain   a
commutative diagram with exact sequences 
\begin{align}\label{ci.relseq} &
\xymatrix{ 
K_2(Y_1)_\Q \ar[r] \ar[d]_{\ch}^{(1)}  &  K_1(Y_\bull, Y_1)_\Q   \ar[r]  \ar[d]_{\ch}^{(2)} &
K_1(Y_\bull)_\Q \ar[r]   \ar[d]_{\ch}^{(3)} &  \\
\bigoplus\limits_{r\le d+2} H^{2r-2} ( \Z_{Y_1}(r))_\Q  \ar[r] &
\bigoplus\limits_{r\le d+2} H_{\cont}^{2r-2}( p(r) \Omega_{Y_\bull}^{<r} )_\Q \ar[r]  & \bigoplus\limits_{r\le d+2} H_{\cont}^{2r-1}( \Z_{Y.}(r) )_\Q   \ar[r]  & 
 }\\
&\xymatrix{ \ar[r]  &
K_1(Y_1)_\Q \ar[r]  \ar[d]_{\ch}^{(4)}  \ar[r] &  K_0(Y_\bull, Y_1 )_\Q  \ar[d]_{\ch}^{(5)}\\
 \ar[r] &   \bigoplus\limits_{r\le d+2} H^{2r-1}( \Z_{Y_1}(r) )_\Q  \ar[r]& \bigoplus\limits_{r\le d+2} H_{\cont}^{2r-1}( p(r) \Omega_{Y_\bull}^{<r} )_\Q
} \notag
\end{align}
where the lower row comes from the fundamental triangle, Proposition~\ref{diag}.
In order to show that $(3)$ is an isomorphism it suffices to observe:
\begin{itemize}
\item[(a)] \hspace{5mm} the map  $(1)$ is surjective and $(4)$ is bijective,
\item[(b)] \hspace{5mm} the map $(2)$ is bijective and the map  $(5)$ is  injective.
\end{itemize}
Part (a) is shown in \cite[Thm.~9.1]{Bl}. We show part (b).

{From} Proposition~\ref{topcyc.desc}(b) and Lemma~\ref{hoto.descentBK} we get a convergent \'etale descent spectral sequence of Bousfield-Kan type
\begin{equation}\label{ci.spse1}
E_2^{s,t}(K) = H^s_{\cont}(Y_{1,\et} , \sK_{Y_\bull,Y_1,t} )  \Longrightarrow K^\cont_{t-s}(Y_\bull, Y_1)
\end{equation}
As coherent sheaves satisfy \'etale descent we also get from Lemma~\ref{hoto.deschc} a
spectral sequence with Bousfield-Kan type renumbering
\begin{equation}\label{ci.spse2}
E_2^{s,t}(\Z(r)) = H^s_{{\cont}}(Y_{1,\et} , \sH^{2r-t-1}(p(r) \Omega^{<r}_{Y_\bull}  ) )
\Longrightarrow H^{2r-t+s-1}_\cont(Y_1,p(r) \Omega^{<r}_{Y_\bull} ).
\end{equation}
The  Chern character on relative theories induces a morphism of spectral sequences from \eqref{ci.spse1} to
\eqref{ci.spse2}. 
Note that $E_2^{s,t}(K)=E_2^{s,t}(\Z(r))=0$ if $s>d+2$, because ${\rm cd}_p (Y_1)\le
d+1$ \cite[Thm~5.1, Exp.~X]{SGA4} and the relative $K$-sheaves are $p$-primary torsion by Proposition~\ref{topcyc.desc}(a).

By Lemma~\ref{hoto.desciso} in order to show (b) it is enough to show that the Chern character induces an
isomorphism
\[
{\rm ch}: E_2^{s,t}(K) \to \bigoplus_{r\le d+2} E_2^{s,t}(\Z(r))
\]
for $0\le t-s\le 2$ and $s\le d+2$. This follows from:

\begin{claim}\label{claim.locmain}
The Chern character induces an isomorphism of \'etale pro-sheaves
\[
\ch: \sK_{Y_\bull,Y_1,a} \to \bigoplus_{r\le a} \sH^{2r-a-1} ( p(r)\Omega^{<r}_{Y_\bull}) 
\]
for $1\le a<p-2$.
\end{claim}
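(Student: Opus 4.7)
The plan is to verify the claim on \'etale stalks and modulo $p$, after which the topological cyclic homology input of Section~\ref{topcyc} identifies both sides with matching pieces of $p$-adic vanishing cycles. First I would pass to the strict Henselisation of the base, so that the hypotheses of Proposition~\ref{topcyc.isoprop} are satisfied and, in particular, a primitive $p$-th root of unity is available. Both pro-sheaves in the claim are pro-systems of $p$-primary torsion sheaves of bounded exponent at each level---the left side by Proposition~\ref{topcyc.desc}(a), the right side because $p(r)\Omega^{<r}_{Y_n}$ is annihilated by $p^n$---so it suffices to prove the corresponding statement after $-\otimes^L_\Z \Z/p$.

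For the left side, I would work with the cofibre sequence $\sK_{Y_\bull,Y_1}/p\to\sK_{Y_\bull}/p\to\sK_{Y_1}/p$ of pro-sheaves of spectra on $Y_{1,\et}$. By the continuity isomorphism Proposition~\ref{topcyc.cont} the middle term becomes $i^*(\sK/p)_Y$, and Proposition~\ref{topcyc.isoprop} identifies its homotopy pro-sheaves with $\bigoplus_r i^*\sH^{\ast}(\mathfrak V_Y(r))$; the right-hand term admits the analogous description via Geisser-Levine in terms of $W_1\Omega^\bulle_{Y_1,\log}$. For the right side of the claim I would tensor the motivic fundamental triangle of Proposition~\ref{diag} with $\Z/p$, use Proposition~\ref{mot.basicprop}(3) to rewrite $\Z_{Y_\bull}(r)\otimes^L\Z/p=\mathfrak S_{Y_\bull}(r)$, and apply Kato's comparison between the syntomic complex and $\tau_{\le r}Rj_*\Z/p(r)$ to place both sides in exactly the same vanishing-cycles framework, the long exact sequences matching weight-by-weight.

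The last and most delicate step is to verify that these two parallel identifications intertwine the Chern character constructed in Section~\ref{chern}. Since that Chern character is defined universally on $\BGL$ by Gillet's projective bundle method, and since both it and the $K$-theoretic trace to $TC$ are multiplicative, it suffices to check compatibility on the first Chern class ${\rm c}_1$ and on the Bott element $\beta$: for ${\rm c}_1$, both sides factor through the identifications $\Z_{Y_\bull}(1)\simeq\G_{m,Y_\bull}[-1]$ of Proposition~\ref{mot.basicprop}(1) and $\G_m\otimes^L\Z/p[-1]\simeq\mathfrak V(1)$ of Lemma~\ref{topcyc.gmiso}; for $\beta$ the normalization is pinned down by a chosen primitive $p$-th root of unity via Moore's theorem recalled in Section~\ref{topcyc}. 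This final compatibility, reconciling the cohomological Gillet construction with the trace to topological cyclic homology underlying Proposition~\ref{topcyc.isoprop}, is the main obstacle; the cleanest resolution would be a rigidity statement on $\BGL_{W_\bull}$ showing that any multiplicative natural transformation with the correct first Chern class is unique, so that our Chern character must coincide with the $TC$-induced isomorphism.
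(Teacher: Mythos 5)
Your overall strategy — reduce to $\Z/p$-coefficients using that both sides are $p$-primary torsion of bounded exponent, then invoke continuity (Proposition~\ref{topcyc.cont}) and the trace-to-$TC$ identification (Proposition~\ref{topcyc.isoprop}) to rewrite the $K$-theory side in terms of $p$-adic vanishing cycles, and match this with the syntomic description of the motivic side — is indeed the skeleton of the paper's argument. But the step you yourself flag as the main obstacle is the one the paper actually has to prove, and your proposed resolution does not close the gap: you posit ``a rigidity statement on $\BGL_{W_\bull}$ showing that any multiplicative natural transformation with the correct first Chern class is unique,'' but such a statement is neither in the paper nor obviously available, and without it you have not shown that the Gillet-style Chern character agrees with the $TC$-induced isomorphism. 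The paper's route around this is concrete: it introduces Sato's \'etale Chern character (which is constructed from the same universal Chern classes, so it fits in a commutative square with the paper's $\ch$), invokes Kurihara's isomorphism between $\bigoplus_r i^*\sH^{2r-a}(\mathfrak V_Y(r))$ and $\bigoplus_r\sH^{2r-a}(\Z_{Y_\bull}(r)\otimes\Z/p)$ as the other vertical arrow, and then verifies that Sato's Chern character sends Bott elements to Bott elements and is compatible with Steinberg symbols — which, by the \emph{uniqueness} clause of Proposition~\ref{topcyc.isoprop}, forces it to coincide with the $TC$-isomorphism. That uniqueness is the available rigidity statement, and it is phrased in terms of $\beta$ and symbols, not in terms of $\BGL$-naturality.

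There are two further technical slips. First, passing to the strict Henselisation of the base does not produce a primitive $p$-th root of unity: $\Q_p(\zeta_p)/\Q_p$ is totally ramified of degree $p-1$, so no unramified extension of $W$ contains $\zeta_p$. The paper instead makes the (ramified) base change $W\subset W[\zeta_p]$. Second, your cofibre sequence and the appearance of $i^*(\sK/p)_Y$ presuppose that the formal scheme $Y_\bull$ is the completion of an honest smooth scheme $Y/W$; the paper justifies this by first reducing to the affine case and then invoking Elkik's algebraization theorem, a step you use implicitly but never name. Finally, the careful bookkeeping of the ranges (epimorphism for $2\le a<p-1$, monomorphism for $2\le a<p-2$), which comes from the five-lemma applied to the relative sequence, and the separate elementary treatment of $a=1$ via Dennis--Stein symbols and the $p$-adic logarithm, are both glossed over in your sketch, though neither is the essential difficulty.
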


{\em Case $a=1$}: It is known that $\sK_{Y_1,2}$ is locally generated by Steinberg symbols
\cite{DS}, so $\sK_{Y_\bull , 2} \to \sK_{Y_1,2}$ is surjective and therefore
$\sK_{Y_\bull, Y_1, 1}= (\G_m)_{Y_\bull,Y_1}$. The target set of the Chern character for
$a=1$ is just $p \sO_{X_\bull}$ and the Chern character is the $p$-adic logarithm
isomorphism in this case because of the isomorphism in Proposition~\ref{mot.basicprop}(1).

\medskip

{\em Case $a>1$}: By Proposition~\ref{topcyc.desc}(a) there is an isomorphism of
pro-sheaves
\[
\sK_{Y_\bull,Y_1,a} \xrightarrow{\sim}   (\sK /p^\bull )_{Y_\bull,Y_1,a}
\] 
and similarly for relative motivic cohomology. By a simple d\'evissage it therefore suffices
to show that the Chern character of \'etale pro-sheaves
\[
\ch: (\sK / p )_{Y_\bull,Y_1,a} \to \bigoplus_{r\le a} \sH^{2r-a-1} (
p(r)\Omega^{<r}_{Y_\bull} \otimes_\Z \Z/p ) 
\]
is an epimorphism for $2\le a< p-1$ and a monomorphism for $2\le a< p-2$.  

Observe that
\begin{equation}\label{ci.iso2}
\ch: (\sK / p)_{Y_1,a} \to \sH^{a}(\Z_{Y_1}(a) \otimes_\Z \Z/p ) \\
\end{equation}
is an isomorphism for all $a<p$.
Concerning \eqref{ci.iso2}, note that $ \sH^{a}(\Z_{Y_1}(r) \otimes_\Z \Z/p )=0$ for $r\ne
a$ by \cite{GL}.
Indeed, Geisser-Levine show that there is precisely one such morphism \eqref{ci.iso2} compatible with
Steinberg symbols
on both sides, which our Chern character is, and that this one morphism is an
isomorphism.

Using the sheaf analog of the commutative diagram of exact sequences \eqref{ci.relseq},
the isomorphism \eqref{ci.iso2} and the following claim, we finish the proof of
Theorem~\ref{ciso.isothm}.

\begin{claim}
The Chern character induces an isomorphism
\begin{equation} \label{ci.iso1} 
\ch :   (\sK / p)_{Y_\bull,a} \to   \bigoplus_{r\le a} \sH^{2r-a} (
\Z_{Y_\bull}(r) \otimes_\Z \Z/p ) 
\end{equation}
 for $2\le a<p-1$.
\end{claim}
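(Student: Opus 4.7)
The strategy is to reduce the claim to the comparison between mod-$p$ $K$-theory and $p$-adic vanishing cycles provided by Proposition~\ref{topcyc.isoprop}, and to match the Chern character with that comparison via symbol generation.

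Since both sides are \'etale pro-sheaves on $Y_1$, it is enough to test the map on strict Henselian stalks. \'Etale-locally on $Y_1$, the smooth formal scheme $Y_\bull$ lifts to a smooth affine scheme $Y$ over $W$, and after a further base change from $W$ to $W[\zeta_p]$ (which is \'etale on special fibres) we may assume the base ring contains a primitive $p$-th root of unity. By the continuity isomorphism of Proposition~\ref{topcyc.cont} we identify $(\sK/p)_{Y_\bull,a}$ with $i^*(\sK/p)_{Y,a}$, and Proposition~\ref{topcyc.isoprop} then supplies a canonical isomorphism
\[
(\sK/p)_{Y_\bull,a} \xrightarrow{\sim} \bigoplus_{r\le a} i^* \sH^{2r-a}(\frak V_{Y}(r))
\]
uniquely determined by its effect on Bott-type symbols $\beta^t\{a_1,\dots,a_{a-2t}\}$.

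The next step is to identify the target $\bigoplus_{r\le a}\sH^{2r-a}(\Z_{Y_\bull}(r)\otimes^L_\Z\Z/p)$ of the Chern character with $\bigoplus_{r\le a} i^*\sH^{2r-a}(\frak V_Y(r))$. Using Proposition~\ref{mot.basicprop}(3) I replace $\Z_{Y_\bull}(r)\otimes^L\Z/p$ by a mod-$p$ shadow of the syntomic complex $\mathfrak S_{Y_\bull}(r)$; the Fontaine--Messing--Kurihara--Tsuji quasi-isomorphism $\mathfrak S_{Y_\bull}(r)_\et\simeq \tau_{\le r}i^*Rj_*\mu_p^{\otimes r}$, valid for $r<p-1$, combined with the definition of $\frak V_Y(r)$ as the fibre of the Bloch--Kato residue and Proposition~\ref{kato.thm} on the generation of $W_\bull\Omega^r_{Y_1,\log}$ by symbols, then yields the required identification.

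Finally one must verify that our Chern character agrees, under these two identifications, with the isomorphism from Proposition~\ref{topcyc.isoprop}. Both maps are constructed multiplicatively from universal classes, so by the uniqueness clause of that proposition it suffices to check agreement on Bott-type symbols. For $t=0$ this is a thickened version of the Geisser--Levine compatibility of $\ch$ with $d\log$ used to establish \eqref{ci.iso2}, carried out at the level of $Y_\bull$ via the multiplicativity of Kato's syntomic regulator (cf.\ the proof of Proposition~\ref{mot.basicprop}(4)). For $t>0$ it reduces to checking that the universal first Chern class pairs with the Bott element according to $\beta\mapsto\{\zeta_p\}\in\G_m(W[\zeta_p])\otimes\Z/p$, which is the content of Proposition~\ref{mot.basicprop}(1). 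The principal obstacle is the middle step: pinning down the precise quasi-isomorphism between $\Z_{Y_\bull}(r)\otimes^L\Z/p$ and $\frak V_Y(r)$, since the summand $W_\bull\Omega^r_{Y_1,\log}[-r]$ built into Definition~\ref{Z(r)} and the residue map built into $\frak V_Y(r)$ must be matched; Theorem~\ref{comp} provides exactly this type of compatibility at the level of the fundamental triangle, and the hypothesis $a<p-1$ is the arithmetic range within which all of these comparisons are available.
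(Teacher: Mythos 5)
Your proposal follows essentially the same strategy as the paper: reduce to an affine $Y_\bull$ so that (by Elkik) $Y_\bull$ algebraizes, use the continuity isomorphism of Proposition~\ref{topcyc.cont}, identify the target mod-$p$ motivic cohomology with $p$-adic vanishing cycles $\frak V_Y(r)$, and then conclude via the uniqueness clause in Proposition~\ref{topcyc.isoprop} after a base change to $W[\zeta_p]$. That is the right skeleton.

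The one place where your packaging diverges in a way worth flagging: the paper does not directly verify symbol compatibility of its own Chern character with the $\frak V_Y(r)$ isomorphism. Instead it introduces Sato's Chern character $i^*(\sK/p)_{Y,a}\to\bigoplus_{r\le a}i^*\sH^{2r-a}(\frak V_Y(r))$ as the top row of a commutative square, cites Kurihara~[Ku1] for the right vertical isomorphism $\bigoplus_{r\le a}i^*\sH^{2r-a}(\frak V_Y(r))\cong\bigoplus_{r\le a}\sH^{2r-a}(\Z_{Y_\bull}(r)\otimes\Z/p)$, observes that the square commutes because both Chern characters are built from universal classes on $\BGL$, and only then applies Proposition~\ref{topcyc.isoprop} to the top row (Sato's map is known to send the Bott element to the Bott element and be compatible with Steinberg symbols). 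Your proposal compresses this into a direct symbol check on your Chern character, which is sound in spirit but leaves more to verify by hand (multiplicativity, Bott compatibility) than simply invoking Sato's already-established construction. Also, your appeal to Theorem~\ref{comp} for the identification of $\Z_{Y_\bull}(r)\otimes^L\Z/p$ with $\frak V_Y(r)$ is not the right reference -- Theorem~\ref{comp} concerns the connecting map to $p(r)\Omega^{<r}$ and the crystalline cycle class, not the \'etale comparison with vanishing cycles; the actual input here is Kurihara's theorem (the Fontaine--Messing--Kato comparison $\frak S_{Y_\bull}(r)_\et\simeq\tau_{\le r}i^*Rj_*\mu_p^{\otimes r}$, which you do cite), together with the mod-$p^\bull$ vs.\ mod-$p$ d\'evissage. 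Fixing that citation, the argument works.
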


In order to prove the claim we can assume that $Y_\bull$ is affine. Then by
\cite[Thm.~7]{Elk} our $Y_\bull$ is the $p$-adic formal scheme associated to a smooth affine
scheme $Y/W$. 
With the notation as in Section~\ref{topcyc}, in particular with $i:Y_1 \to Y$ the
immersion of the closed fibre, there is a commutative diagram
\[
\xymatrix{
i^* (\sK / p)_{Y,a} \ar[r]^-{\ch} \ar[d]_{\wr} &   \bigoplus_{r\le a}i^* \sH^{2r-a} (
\frak V_Y(r) )   \ar[d]_\wr \\
(\sK / p)_{Y_\bull,a} \ar[r]^-{\ch} &   \bigoplus_{r\le a} \sH^{2r-a} (
\Z_{Y_\bull}(r) \otimes_\Z \Z/p ) 
}
\]
The right vertical isomorphism is due to Kurihara~\cite{Ku1} and the left vertical
isomorphism is from Proposition~\ref{topcyc.cont}.
The top horizontal map is
induced by Sato's Chern character~\cite[Sec.~4]{Sa}. The square commutes, because Sato's
Chern character is also constructed in terms of universal Chern classes analogous to our construction in Section~\ref{chern}.
 
In order to show that our Chern character induces an isomorphism as the lower horizontal
map in the commutative square we can make the base change 
$W \subset W[\zeta_p]$ with $\zeta_p$ a primitive $p$-th root of unity. Then it is clear
that Sato's Chern character maps the Bott element to the Bott element and is compatible
with Steinberg symbols. Therefore Proposition~\ref{topcyc.isoprop} shows that the top
horizontal map is an isomorphism. 
 \end{proof}

\medskip

In order to finish the proof of the Main Theorem~\ref{MainThm}, combine
Theorem~\ref{def.thm} with Theorem~\ref{ciso.isothm}.

\medskip

As a direct generalization of Theorem~\ref{ciso.isothm} we obtain

\begin{thm} \label{higherK}
For $i>0$ and $p>d+i+5$ the Chern character
\[
\ch : K^\cont_i(X_\bull)_\Q  \to \bigoplus_{r\le d+i} H^{2r-i}_\cont(X_1 , \Z(r)_{X_\bull})_\Q
\]
is an isomorphism.
\end{thm}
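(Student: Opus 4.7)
The plan is to mimic the proof of Theorem~\ref{ciso.isothm}, shifting every $K$-theoretic degree up by $i$. First I would invoke the torus trick: setting $Y_\bull = X_\bull \times \G_m$ and using the projective bundle decomposition of Corollary~\ref{projbuniso} together with the Mayer--Vietoris argument of \cite[Sec.~6]{TT}, the multiplication map $\{T\}: K_i^\cont(X_\bull)_\Q \to K_{i+1}^\cont(Y_\bull)_\Q$ is split injective with retract $\partial$, compatibly with the analogous motivic splitting via the Chern character. Hence it is enough to prove the Chern character isomorphism for $K_{i+1}^\cont(Y_\bull)_\Q$, where $\dim Y_1 = d+1$.

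Second, I would compare the relative $K$-theory long exact sequence for the pair $(Y_\bull, Y_1)$ with the long exact sequence coming from the fundamental triangle of Proposition~\ref{diag}, exactly as in diagram~\eqref{ci.relseq} but centred on the $K_{i+1}$ position rather than on $K_1$. The Chern character is a rational isomorphism on $K_*(Y_1)$ by Bloch~\cite[Thm.~9.1]{Bl}, so the five-lemma reduces the problem to showing that the Chern character is an isomorphism on the relative continuous groups $K_j^\cont(Y_\bull, Y_1)_\Q$ for $j$ in a short window around $i+1$.

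For the relative theory I would then set up the Bousfield--Kan descent spectral sequences~\eqref{ci.spse1} and~\eqref{ci.spse2}. By Proposition~\ref{topcyc.desc}(a) the relative $K$-sheaves are $p$-primary torsion of finite exponent, and since $\mathrm{cd}_p(Y_1)\le d+2$ all relevant $E_2$-terms live in $s \le d+2$. Tracking which $(s,t)$ must be handled to conclude via the five-lemma yields a range $t \le d+i+3$, and we are reduced to the sheaf-level claim
\[
\ch : \sK_{Y_\bull, Y_1, a} \xrightarrow{\sim} \bigoplus_{r \le a} \sH^{2r-a-1}\bigl(p(r)\Omega^{<r}_{Y_\bull}\bigr)
\]
for $1 \le a \le d+i+3$. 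Together with the constraint $a < p-2$ coming from Proposition~\ref{topcyc.isoprop}, this forces precisely the hypothesis $p > d+i+5$. The sheaf-level claim itself is established exactly as in Claim~\ref{claim.locmain}: d\'evissage via Proposition~\ref{topcyc.desc}(a) reduces to $\Z/p$-coefficients, Proposition~\ref{topcyc.cont} identifies the continuous relative $K$-sheaves with their algebraic counterparts, and a commutative square involving Kurihara's identification~\cite{Ku1} of motivic cohomology sheaves with $p$-adic vanishing cycles and Sato's Chern character~\cite[Sec.~4]{Sa} reduces the question, after base change to $W[\zeta_p]$, to the explicit Bott-symbol description in Proposition~\ref{topcyc.isoprop}. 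The main obstacle is purely bookkeeping: one must verify that raising the $K$-theoretic degree by $i$ only widens the spectral sequence window by $i$, so that the sheaf-level claim is needed exactly in the regime $a \le d+i+3 < p-2$, which is the origin of the bound $p > d+i+5$.
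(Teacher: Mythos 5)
Your proof retains the $\G_m$-delooping trick, but the paper's proof of Theorem~\ref{higherK} explicitly \emph{omits} it: the authors remark that ``one omits the delooping trick at the beginning and then reduces in the same way to Claim~\ref{claim.locmain}.'' The delooping in Theorem~\ref{ciso.isothm} was forced because the relative exact sequence around $K_0(X_\bull)$ terminates in $K_{-1}^\cont(X_\bull,X_1)$, which lies outside the range $t\ge s$ covered by the Bousfield--Kan descent spectral sequence of Lemma~\ref{hoto.descentBK}; passing to $Y_\bull=X_\bull\times\G_m$ and $K_1(Y_\bull)$ sidesteps this. For $i\ge 1$ the relative sequence around $K_i(X_\bull)$ only reaches down to $K_{i-1}(X_\bull,X_1)$ with $i-1\ge 0$, which is accessible, so the detour through $Y_\bull$ is unnecessary. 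Your version is not incorrect, but it is the less economical route: the extra torus factor raises $\dim$ and hence the relevant \'etale cohomological dimension by one, and sitting at $K_{i+1}(Y_\bull)$ rather than $K_i(X_\bull)$ widens the needed window of $t-s$ by one as well. Chasing this through appears to require Claim~\ref{claim.locmain} for $a$ up to $d+i+4$ rather than the $d+i+3$ you state, giving $p>d+i+6$ instead of the theorem's $p>d+i+5$. The paper's own bounds seem to carry a unit of slack so your count may ultimately be defensible, but the non-delooped argument is both simpler and what actually produces the constant in the statement; the key observation you missed is precisely that the only reason to deloop disappears once $i>0$.
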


In fact in the previous proof one omits the delooping trick at the beginning and then
reduces in the same way to Claim~\ref{claim.locmain}.

\section{Milnor $K$-theory} \label{localcomputations}

\noindent 
In this section we recall some properties of Milnor $K$-theory and we study the infinitesimal part of Milnor $K$-groups
for smooth rings over $W_n$, recollecting  results of Kurihara \cite{Ku}, \cite{Ku2}. The
main result of this section, Theorem~\ref{thm2.1}, is used in
Proposition~\ref{mot.basicprop}(4) to relate Milnor $K$-theory and motivic cohomology of a
$p$-adic scheme.

Consider the functor 
\[
F:A \mapsto \otimes_{n\ge 0} (A^\times )^{\otimes n} /{\it St}
\]
from commutative rings to graded rings, where ${\it St}$ is the graded two-sided ideal generated by elements $a
\otimes b$ with $a + b =1$.  

Let $S$ be a base scheme and let
$F^\sim$ be the sheaf on the category of schemes over $S$  associated to the functor $F$ in either the Zariski,
Nisnevich or \'etale topology.
The Milnor $K$-sheaf $\mathcal K^M_*$ is a certain quotient sheaf of $F^\sim$, defined in
\cite{Ke2}. 
In particular it is locally generated by symbols 
\[
\{ x_1,\ldots , x_r \}\;\;\;\; \text{ with }\; x_1, \ldots , x_r \in \mathcal O^\times .
\] 

In fact, if the residue fields at all points of $S$ are infinite, the map
$F^\sim \to \mathcal K^M_*$ is an isomorphism. For a scheme $X/S$ denote by
$\mathcal K^M_{X,*}$ the restriction of $\mathcal K^M_*$ to the small site of $X$.

\medskip

Let $S=\Spec \,k$ for a perfect field $k$ with $\Char\, k = p >0$ and let $X\in \Sm_k$.
\begin{prop}\label{mil.BKiso}\mbox{}
\begin{itemize}
\item[(a)]
The sheaf
$\mathcal K^M_{X,*}$ is $p$-torsion free. 
\item[(b)] 
The composite of the Teichm\"uller lift and the $d \log$-map induces an
isomorphism 
\[
 d \log \, [ - ]: \mathcal K_{X,r}^M / p^n  \xrightarrow{\simeq }   W_n \Omega^r_{X,\log} 
\]
with the logarithmic de Rham-Witt sheaf.
\end{itemize}
\end{prop}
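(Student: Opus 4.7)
The plan is to reduce both parts to known results on Milnor $K$-theory of fields of characteristic $p$ by means of the Gersten resolution of $\mathcal K^M_{X,r}$ in the Nisnevich topology, established in \cite{EVM,Ke}:
\[
0 \to \mathcal K^M_{X,r} \to \bigoplus_{x\in X^{(0)}} (i_x)_* K^M_r(k(x)) \to \bigoplus_{x\in X^{(1)}} (i_x)_* K^M_{r-1}(k(x)) \to \cdots.
\]

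For part (a), I would invoke Izhboldin's theorem that $K^M_s(F)$ is $p$-torsion free for every field $F$ of characteristic $p>0$. Since $\mathcal K^M_{X,r}$ embeds into the first term of the Gersten resolution, a direct sum of $p$-torsion free groups, it is itself $p$-torsion free.

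For part (b), the first task is to check that $d\log\,[-]$ is well defined on $\mathcal K^M_{X,r}/p^n$: bilinearity of $d\log[a]\wedge d\log[b]$ in units is clear, the Steinberg relation $d\log[a]\wedge d\log[1-a]=0$ holds in $W_n\Omega^2_{X,\log}$ for $a,1-a\in\mathcal O_X^\times$, and the target is $p^n$-torsion, so the symbol map factors through $\mathcal K^M_{X,r}/p^n$. To prove that the resulting map of sheaves is an isomorphism, I would use that both sides admit compatible Gersten-type resolutions. By part (a) the Gersten complex above remains exact after reducing mod $p^n$, yielding a resolution of $\mathcal K^M_{X,r}/p^n$ with terms $\bigoplus_{x\in X^{(c)}} (i_x)_* K^M_{r-c}(k(x))/p^n$. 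On the other hand, $W_n\Omega^r_{X,\log}$ admits an analogous Gersten resolution with terms $\bigoplus_{x\in X^{(c)}} (i_x)_* W_n\Omega^{r-c}_{k(x),\log}$, a consequence of the cohomological purity result of Gros-Suwa \cite{GS}. The $d\log$ map is termwise compatible with the tame/residue symbols in the two resolutions, so it suffices to show that for each residue field $F=k(x)$ the map
\[
d\log\,[-] : K^M_s(F)/p^n \xrightarrow{\sim} W_n\Omega^s_{F,\log}
\]
is an isomorphism.

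This field-theoretic statement is the main input: for $n=1$ it is the Bloch-Kato-Gabber theorem, and for arbitrary $n\ge 1$ it is due to Geisser-Levine \cite{GL}. Granting these references, the proposition reduces to the sheaf-theoretic comparison sketched above, whose principal technical point is the termwise compatibility of the $d\log$ symbol with the residue maps of both Gersten complexes; this is classical and ultimately reflects the compatibility of Teichm\"uller lifts with tame symbols. The main obstacle is therefore the deep field-theoretic isomorphism, which is handled by the cited references; the rest of the argument is a formal comparison of resolutions.
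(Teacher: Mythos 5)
Your argument is correct and matches the paper's proof, which simply cites Izhboldin for (a) and Bloch--Kato for (b); you have usefully spelled out the Gersten-resolution reduction (together with the fact that Izhboldin's $p$-torsion-freeness propagates to all image subgroups, so that the complex stays exact mod $p^n$) that makes these field-level results apply to the Nisnevich sheaves. One small bibliographical precision: the field-level isomorphism $K^M_s(F)/p^n \xrightarrow{\sim} W_n\Omega^s_{F,\log}$ for general $n$ is already established in Bloch--Kato \cite{BK} (the reference the paper actually cites for part (b)), not in Geisser--Levine, who reuse it; the Bloch--Kato--Gabber theorem you mention is the $n=1$ input to their inductive argument.
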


\begin{proof}
Part (a) is due to Izhboldin~\cite{Iz}. Part (b) is due to Bloch-Kato~\cite{BK}.
\end{proof}

\medskip

Let $R$ be
an essentially smooth local ring over $W_n=W(k)/ p^n$. By $R_1$ we denote
$R/(p)$. In this section, we study Milnor $K$-groups of $R$.

By the Milnor $K$-group $K^M_r(R)$ we mean the stalk of the Milnor $K$-sheaf in Zariski
topology over $\Spec R$.
We consider the filtration $U^i K^M_r (R) \subset K^M_r(R)$ ($i\ge 1$), where
$U^i K^M_r (R)$ is generated by symbols
\[
\{ 1 + p^i x, x_2, \ldots , x_r \}
\]
with $x\in R $ and $x_i \in R^\times$ ($2\le i \le r$).
One easily shows that $U^1K^M_r(R)  $ is equal to the kernel of
$K^M_r(R) \to K^M_r(R_1) $.

\begin{lem}\label{mil.torsion}
The group $U^1 K^M_r(R) $ is $p$-primary torsion of finite exponent.
\end{lem}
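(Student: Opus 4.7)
The plan is to prove the stronger statement that $p^{n-1}$ annihilates $U^1 K^M_r(R)$, which immediately gives the lemma. First I would use the basic multiplicativity of Milnor $K$-symbols in each slot, namely
\[
\{a^m,y_2,\ldots,y_r\}=m\,\{a,y_2,\ldots,y_r\},
\]
which follows directly from the fact that $\mathcal{K}^M_*$ is a quotient of the tensor algebra on units. Since $U^1K^M_r(R)$ is generated by symbols of the form $\{1+px,x_2,\ldots,x_r\}$ with $x\in R$ and $x_i\in R^\times$, it will suffice to show that $(1+px)^{p^{n-1}}=1$ in $R$ for every $x\in R$.

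This is a short computation inside $R$, which is a $W_n$-algebra so that $p^n=0$ in $R$. Expanding via the binomial theorem gives
\[
(1+px)^{p^{n-1}}=1+\sum_{k=1}^{p^{n-1}}\binom{p^{n-1}}{k}p^k x^k,
\]
so I just need each term in the sum to have $p$-adic valuation at least $n$. Recall that $v_p\bigl(\binom{p^{n-1}}{k}\bigr)=n-1-v_p(k)$ for $1\le k\le p^{n-1}$, hence the valuation of the $k$-th summand equals $n-1+k-v_p(k)$. The elementary inequality $k-v_p(k)\ge 1$ for $k\ge 1$ (since $v_p(k)\le\log_p k<k$) finishes the estimate, so every summand lies in $p^n R=0$. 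Therefore $(1+px)^{p^{n-1}}=1$, and combining with the multiplicativity identity yields $p^{n-1}\cdot U^1K^M_r(R)=0$.

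There is no real obstacle here: the entire argument is a one-line symbol manipulation combined with the binomial computation above, and both work uniformly for all primes $p$ (including $p=2$, where the worst case $k=2$, $v_2(k)=1$ still satisfies $k-v_2(k)=1$). The lemma follows with explicit exponent $p^{n-1}$.
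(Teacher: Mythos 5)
Your argument is correct and takes a genuinely different, more elementary route than the paper. The paper reduces to $r=2$, passes to the relative group $K_2(R,pR)$ presented by van der Kallen--Stienstra pointy bracket symbols $\pb{a,b}$, and invokes the formal group law $(b,c)\mapsto b+c-abc$ on $pR$ to conclude that $K_2(R,pR)$, hence its image $U^1K^M_2(R)$, is $p$-primary torsion of finite exponent, without producing an explicit bound. You instead observe directly that $(1+px)^{p^{n-1}}=1$ already in $R$, which follows from the valuation identity $v_p\bigl(\tbinom{p^{n-1}}{k}\bigr)=n-1-v_p(k)$ together with $k-v_p(k)\ge 1$, and then use $\Z$-linearity of Milnor symbols in the first slot to kill the generators of $U^1K^M_r(R)$. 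Both steps are sound: the Milnor $K$-sheaf is a quotient of the tensor algebra on units, so $\{a^m,\ldots\}=m\{a,\ldots\}$, and the binomial estimate is exact. This yields the explicit exponent $p^{n-1}$, works uniformly in $r$ with no reduction to $r=2$, and avoids the relative $K$-theory and formal group machinery entirely. What the paper's route buys is a structural handle on $K_2(R,pR)$ itself (which it exploits again in the first step of Theorem~\ref{thm2.1}); what yours buys is self-containment and a sharp bound.
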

\begin{proof}
Without loss of generality we can assume $r=2$.
The theory of pointy bracket symbols for the relative $K$-group
$K_2(R,pR)$  (\cite{SK}), yields generators $\pb{a,b}$ of $U^1 K^M_r(R)$
defined for
 $a,b \in R$
with at least one of $a,b \in pR$. Relations for the pointy brackets
are:\nn
(i) $\pb{a,b} = -\pb{b,a};\ a\in R, b\in pR \text{ or }b\in R, a\in
pR$\nn (ii) $\pb{a,b}+\pb{a,c} = \pb{a,b+c-abc};\ \ a\in pR \text{ or
}b,c\in pR$\nn (iii) $\pb{a,bc}=\pb{ab,c}+\pb{ac,b};\ a\in pR$.

Note that for $a$ fixed, the mapping $(b,c) \mapsto b+c-abc$ is a formal group
law. It follows that for $N\gg 0$, $p^N\pb{a,b}= \pb{a,0}=0$, so $K_2(R,pR)$
is $p$-primary torsion of finite exponent.
\end{proof}

\begin{thm}\label{thm2.1} For $p>2$ the assignment 
\eq{}{\label{def.Exp} pxd\log y_1\wedge\ldots\wedge d\log y_{r-1} \mapsto
\{\exp(px),y_1,\dotsc,y_{r-1}\}
} 
induces an isomorphism
\eq{}{\Exp:  p\Omega^{r-1}_{R_n}/p^2d\Omega^{r-2}_{R_n} \stackrel{\sim}{\to} U^{1}K^M_r(R_n). 
}
\end{thm}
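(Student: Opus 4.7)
My plan is to construct explicit inverse maps in both directions using the $p$-adic exponential and logarithm, and then verify that they are mutually inverse. First I would establish convergence: for $p>2$, the series $\Exp(pX)=\sum_{k\ge0}(pX)^k/k!$ converges in $R_n$ since $v_p(p^k/k!)\ge k(p-2)/(p-1)\to\infty$, and likewise $\log(1+pY)=\sum_{k\ge1}(-1)^{k-1}(pY)^k/k$ converges since $v_p(p^k/k)\ge 1$. These two maps are mutually inverse group isomorphisms between $(pR_n,+)$ and $(1+pR_n,\cdot)$, which is what makes both $\Exp$ and its inverse candidate $L$ even make sense.

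Next I would define the candidate inverse
\[
L:\ U^1K^M_r(R_n)\longrightarrow p\Omega^{r-1}_{R_n}/p^2d\Omega^{r-2}_{R_n},\qquad \{1+pa,y_1,\dotsc,y_{r-1}\}\longmapsto \log(1+pa)\cdot d\log y_1\wedge\cdots\wedge d\log y_{r-1},
\]
extended by multilinearity (which is consistent because $\log$ is a homomorphism from $(1+pR_n,\cdot)$ to $(pR_n,+)$ and $d\log$ turns products into sums). Well-definedness amounts to checking the Steinberg relations on the generators: if two units $y_i,y_j$ satisfy $y_i+y_j=1$, then $dy_i=-dy_j$ forces $d\log y_i\wedge d\log y_j=0$, matching $\{y_i,y_j\}=0$; the only potentially problematic ``mixed'' Steinberg, namely $(1+pa)+y_i=1$, never occurs, since it would force $y_i=-pa$, which is not a unit in the local ring $R_n$. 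So $L$ descends to $U^1K^M_r(R_n)$.

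For $\Exp$, I would define it on the generators $px\cdot d\log y_1\wedge\cdots\wedge d\log y_{r-1}$ (with $y_i\in R_n^\times$) by the stated formula, and extend to all of $p\Omega^{r-1}_{R_n}$ via the universal property of K\"ahler differentials: an arbitrary $dz$ with $z$ not a unit is written as $(z+t)\,d\log(z+t)-t\,d\log t$ for units $t$ and $z+t$ (available after a standard enlargement of $R$ in case the residue field is too small), and the formula is extended by Leibniz and antisymmetry. Multiplicativity of $\exp$ and bilinearity of symbols make this compatible.

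The main obstacle is the factorization: one must show $\Exp(p^2 d\omega)=0$ in $U^1K^M_r(R_n)$ for every $\omega\in\Omega^{r-2}_{R_n}$. By multilinearity, this reduces to the symbol identity $\{\exp(p^2z),z\}=0$ in $K^M_2(R_n)$ for a unit $z$. To prove this I would exploit the factorization $\exp(p^2z)=\exp(pz)^p$, giving $\{\exp(p^2z),z\}=p\cdot\{\exp(pz),z\}$, and then combine it with the Dennis-Stein pointy-bracket calculus already used in the proof of Lemma \ref{mil.torsion}---especially relation (iii) $\pb{a,bc}=\pb{ab,c}+\pb{ac,b}$---to rewrite the right-hand side as a bracket whose image in $K^M_2(R_n)$ visibly vanishes. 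This is the technical core of Kurihara's theorem and is where the hypothesis $p>2$ is essentially used (for smaller $p$, the $2$-torsion in the bracket relations obstructs the rewriting). Once this is granted, $\Exp$ descends to the quotient, and the identities $L\circ\Exp=\mathrm{id}$ and $\Exp\circ L=\mathrm{id}$ on generators (from $\log\circ\exp=\mathrm{id}$ and $\exp\circ\log=\mathrm{id}$) complete the proof that $\Exp$ is an isomorphism.
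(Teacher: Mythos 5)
Your strategy of constructing an explicit inverse $L$ via the $p$-adic logarithm and verifying $L\circ\Exp=\mathrm{id}$, $\Exp\circ L=\mathrm{id}$ contains a genuine gap at the well-definedness of $L$. You claim this ``amounts to checking the Steinberg relations on the generators,'' but it does not. The group $U^1K^M_r(R_n)$ is defined as a \emph{subgroup} of $K^M_r(R_n)$, not by an independent presentation. While it is generated by symbols $\{1+pa,y_1,\dotsc,y_{r-1}\}$, the full set of relations among such symbols is inherited from their ambient position inside $K^M_r(R_n)$: a symbol $\{u_1,\dotsc,u_r\}$ with general units $u_i$ (none of the form $1+pa$) may nevertheless lie in $U^1K^M_r(R_n)$ because it vanishes in $K^M_r(R_1)$, and can then be rewritten in several ways as a sum of special symbols. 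Showing $L$ is independent of such rewritings is essentially equivalent to showing $\Exp$ is injective, which is precisely the hard content of the theorem. (Recall also that for local rings with possibly finite residue field the Milnor $K$-sheaf of the paper, following \cite{Ke2}, is a further quotient of the naive symbol algebra, introducing yet more relations.) You cannot get $L$ for free.

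The paper avoids ever constructing an inverse. Its third step instead filters $G_r = p\Omega^{r-1}_R/p^2 d\Omega^{r-2}_R$ by the images $U^i G_r$ of $p^i\Omega^{r-1}_R$, identifies $\gr^i G_r\cong \Omega^{r-1}_{R_1}/B_{i-1}\Omega^{r-1}_{R_1}$ via \cite[Cor.~0.2.3.13]{Il}, and then cites Kurihara \cite[Prop.~2.3]{Ku} for the isomorphism of graded pieces $\gr^i G_r\to\gr^i K^M_r(R)$. Bijectivity of $\Exp$ then follows purely from the associated graded, with no need to present $U^1K^M_r$.

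Two further soft spots in your sketch. First, well-definedness of $\Exp$ itself on $p\Omega^{r-1}_{R_n}$ is not immediate: your device of writing $dz = (z+t)\,d\log(z+t) - t\,d\log t$ is plausible but one must verify that all K\"ahler-differential relations are respected, and that the construction is compatible with the enlargements of $R$ you invoke (Milnor $K$-theory does not naively commute with such base changes). The paper handles this by citing Kurihara's well-definedness over the $p$-adic completion $K^M_r(R)^\wedge_p$ and then pulling back via the injection $U^1K^M_2(R)\hookrightarrow K^M_2(R)^\wedge_p$, which is extracted from the $p$-torsion-freeness of $K^M_2(R_1)$ (Izhboldin) and the finite exponent of $U^1K^M_2(R)$ (Lemma~\ref{mil.torsion}). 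Second, your argument for $\Exp(p^2 d\Omega^{r-2})=0$ via $\exp(p^2z)=\exp(pz)^p$ and pointy brackets is left as a sketch that you yourself label ``the technical core of Kurihara's theorem''; the paper simply cites \cite[Cor.~1.3]{Ku2} at this point, again via the injection into $K^M_2(R)^\wedge_p$.
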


\begin{proof}
\mbox{}
\medskip

{\em 1st step: } $\Exp: p \Omega^{r-1}_R \to K^M_r(R)$  as in
\eqref{def.Exp}
is well-defined.

\medskip

Note that Kurihara \cite{Ku2} shows the exponential map is well defined if
$K^M_r(R)$ is replaced by its $p$-adic completion
$K^M_r(R)^\wedge_p$. 
By standard arguments, see \cite[Sec.\ 3.1]{Ku2}, we reduce to $r=2$.
By Proposition~\ref{mil.BKiso}(a) the group  $K^M_2(R_1)$ has no $p$-torsion. This implies
that for any $n\ge 1$
\begin{equation}\label{mil.modpexact}
0 \to U^1 K^M_2(R) \otimes \Z/ p^n  \to  K^M_2(R) \otimes \Z/ p^n \to  K^M_2(R_1) \otimes \Z/ p^n
\to 0
\end{equation}
is exact. For $n\gg 0$ Lemma~\ref{mil.torsion} says that $U^1 K^M_2(R)
\otimes \Z/p^n = U^1 K^M_2(R)$. Taking the inverse limit over $n$ in
\eqref{mil.modpexact} we see that 
\begin{equation}\label{mil.injeq}
 U^1 K^M_2(R)   \to
K^M_2(R)^\wedge_p
\end{equation}
 is injective. So the claim follows from the result
of Kurihara mentioned above.

\medskip

{\em 2nd step: } $\Exp(p^2 d \Omega^{r-2}_R ) = 0$

\medskip

Without loss of generality $r=2$. The claim follows from the
injectivity of \eqref{mil.injeq} and \cite[Cor.\ 1.3]{Ku2}.

\medskip

{\em 3rd step: } $\Exp:  p\Omega^{r-1}_{R_n}/p^2d\Omega^{r-2}_{R_n}
{\to} U^{1}K^M_r(R_n)  $ is an isomorphism.

\medskip

Set $G_r = p \Omega^{r-1}_R / p^2 d \Omega^{r-2}_R$ and define a
filtration on it by the subgroups $U^i G_r \subset G_r $ ($i\ge 1$) given by
the images of $p^i \Omega^{r-1}_R$. Note that
\[
\gr^i G_r =
\Omega^{r-1}_{R_1} / B_{i-1} \Omega^{r-1}_{R_1} ,
\]
see \cite[Cor.\ 0.2.3.13]{Il}. In \cite[Prop.\ 2.3]{Ku} Kurihara shows
that 
\[
\gr^i G_r \to \gr^i K^M_r(R)
\]
is an isomorphism.
This finishes the proof of the theorem.
\end{proof}

\begin{appendix} \section{Homological algebra} \label{sec.hom}

\noindent
In this section we collect some standard facts from homological algebra that we use.
Let $\mathcal T$ be a triangulated category with $t$-structure, see
\cite[Sec.~1.3]{BBD}. 

\begin{lem}\label{homalg.lem1}
For an integer $r$ and for an exact triangle
\[
A\to B \to C \xrightarrow{[1]} A[1]
\]
in $\mathcal T$
with $A \in \mathcal T^{\le r}$  the triangle
\[
A \to \tau_{\le r} B \to \tau_{\le r} C \xrightarrow{[1]} A[1]
\]
is exact.
\end{lem}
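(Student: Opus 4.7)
The plan is to obtain the desired triangle by combining the truncation triangle for $B$ with the given triangle via the octahedral axiom, and then to identify the third vertex with $\tau_{\le r} C$ by uniqueness of truncation.

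First I would observe that, since $A \in \mathcal T^{\le r}$ and $\tau_{> r} B \in \mathcal T^{> r}$, one has $\Hom(A, \tau_{> r} B) = 0$ by the defining orthogonality of the $t$-structure. Hence the composite $A \to B \to \tau_{> r} B$ vanishes, and the map $A \to B$ factors uniquely through a morphism $\alpha \colon A \to \tau_{\le r} B$.

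Next I would apply the octahedral axiom to the composition $A \xrightarrow{\alpha} \tau_{\le r} B \to B$. This produces an object $D$ fitting into two exact triangles
\[
A \xrightarrow{\alpha} \tau_{\le r} B \to D \xrightarrow{[1]} A[1],
\qquad
D \to C \to \tau_{> r} B \xrightarrow{[1]} D[1].
\]
Rotating the first triangle to $\tau_{\le r} B \to D \to A[1] \to \tau_{\le r} B[1]$ and using that $\mathcal T^{\le r}$ is stable under extensions together with $A[1] \in \mathcal T^{\le r-1} \subset \mathcal T^{\le r}$ and $\tau_{\le r} B \in \mathcal T^{\le r}$, I would conclude that $D \in \mathcal T^{\le r}$.

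Finally, since the second triangle $D \to C \to \tau_{> r} B$ has its first vertex in $\mathcal T^{\le r}$ and its third vertex in $\mathcal T^{> r}$, the uniqueness (up to unique isomorphism) of the truncation triangle forces a canonical isomorphism $D \xrightarrow{\sim} \tau_{\le r} C$ under which $D \to C$ becomes the natural map $\tau_{\le r} C \to C$. Substituting this into the first triangle gives the desired exact triangle $A \to \tau_{\le r} B \to \tau_{\le r} C \to A[1]$. There is no real obstacle here, the only point requiring care is the application of the octahedral axiom and the verification that $D$ lies in $\mathcal T^{\le r}$, so that the uniqueness of truncation can be invoked.
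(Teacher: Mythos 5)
The paper states Lemma~\ref{homalg.lem1} without proof, treating it as a standard fact about $t$-structures, so there is no argument in the text to compare against. Your proof is correct and follows the standard route: factor $A \to B$ through $\tau_{\le r}B$ using $\Hom(\mathcal T^{\le r},\mathcal T^{>r})=0$, apply the octahedral axiom to $A \to \tau_{\le r}B \to B$ to produce $D$ with the two auxiliary triangles, deduce $D \in \mathcal T^{\le r}$ from closure under extensions, and then identify the triangle $D \to C \to \tau_{>r}B$ with the truncation triangle of $C$ by its uniqueness. Every step is justified; in particular the extension-closure step and the final invocation of uniqueness of the truncation triangle, which are the only places where care is needed, are handled correctly.

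One small remark: the identification $D \cong \tau_{\le r}C$ as stated is an isomorphism \emph{under} $C$, but for the conclusion you also want it to be compatible with the maps to and from the other vertices $\tau_{\le r}B$ and $A[1]$, so that replacing $D$ by $\tau_{\le r}C$ really yields a triangle with the asserted maps. This follows by the same $\Hom$-vanishing argument (the relevant ambiguities live in groups of the form $\Hom(\mathcal T^{\le r}, \mathcal T^{>r})$, which vanish), but it is worth noting since the lemma is being used in the paper to produce a \emph{specific} morphism $A \to \tau_{\le r}B$, not merely some triangle with the right vertices.
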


\begin{lem}\label{hom.unicity}
For $A,B \in \mathcal T$ with $A\in \mathcal T^{\le r}$ and $B\in \mathcal T^{\le r} \cap
\mathcal T^{\ge r} $ assume given an epimorphism $\mathcal H^r (A) \to \mathcal H^r(B)$.
Then this epimorphism lifts uniquely to a morphism
$A\to B$ in $\mathcal T$, sitting inside an exact triangle
\[
A \to B \to C \to A[1]
\]
which is unique up to unique isomorphism.
\end{lem}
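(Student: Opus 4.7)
My approach is standard $t$-structure yoga. First, I would prove the key Hom-computation: for $A\in\mathcal T^{\le r}$ and $B\in\mathcal T^{\le r}\cap\mathcal T^{\ge r}$, the natural map
\[
\Hom_{\mathcal T}(A,B)\xrightarrow{\mathcal H^r} \Hom_{\mathcal T^\heartsuit}(\mathcal H^r(A),\mathcal H^r(B))
\]
is a bijection. To see this, apply $\Hom(-,B)$ to the canonical truncation triangle $\tau_{\le r-1}A \to A \to \mathcal H^r(A)[-r] \to \tau_{\le r-1}A[1]$. Since $\tau_{\le r-1}A\in\mathcal T^{\le r-1}$ while $B,B[-1]\in\mathcal T^{\ge r}$, orthogonality of the $t$-structure gives $\Hom(\tau_{\le r-1}A[i],B)=0$ for $i=0,1$; hence $\Hom(A,B)\xrightarrow{\sim}\Hom(\mathcal H^r(A)[-r],B)$. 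The right-hand side equals $\Hom_{\mathcal T^\heartsuit}(\mathcal H^r(A),\mathcal H^r(B))$ because $B\simeq \mathcal H^r(B)[-r]$. In particular the prescribed epimorphism $\mathcal H^r(A)\twoheadrightarrow \mathcal H^r(B)$ lifts uniquely to a morphism $w\colon A\to B$ in $\mathcal T$.

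Second, axiom TR1 completes $w$ to an exact triangle $A\xrightarrow{w} B\xrightarrow{u} C\xrightarrow{v} A[1]$, giving existence. Given a second completion $A\xrightarrow{w} B\xrightarrow{u'} C'\xrightarrow{v'} A[1]$, axiom TR3 applied to the identities on $A,B$ produces a morphism of triangles $f\colon C\to C'$; the triangulated five-lemma then shows $f$ is an isomorphism.

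For uniqueness of this $f$: the difference $\delta:=f-f'$ of two such choices satisfies $\delta u=0$ and $v'\delta=0$. The first relation factors $\delta=g\circ v$ for some $g\colon A[1]\to C'$. Applying $\Hom(A[1],-)$ to the triangle for $C'$ and using the orthogonality vanishing $\Hom(A[1],B)=0$ (now with $A[1]\in\mathcal T^{\le r-1}$) embeds $\Hom(A[1],C')$ into $\Hom(A[1],A[1])$, so $g$ is determined by $v'g$. The condition $v'\delta=0$, combined with the long exact sequence of $\Hom(-,A[1])$ attached to the first triangle, the Hom-computation of step~1 applied to $\Hom(B,A)$, and the epimorphism hypothesis on $\mathcal H^r(w)$, then forces $v'g=0$ and hence $g=0$.

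I expect the main obstacle to be this last uniqueness step. The existence of an isomorphism of completions is a routine TR3--plus--five-lemma matter, but promoting to a \emph{unique} isomorphism uses in an essential way both that $B$ is concentrated in degree $r$ (providing the vanishing $\Hom(A[1],B)=0$) and that the lift on $\mathcal H^r$ is epimorphic; the rest is bookkeeping with the two long exact Hom-sequences.
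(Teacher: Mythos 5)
Your first two steps are correct and match the paper's: the Hom-computation shows the epimorphism lifts uniquely to $w\colon A\to B$ (using the truncation triangle for $A$ and orthogonality against $B$), and TR1/TR3 plus the triangulated five-lemma give the triangle and \emph{an} isomorphism of cones. The gap is in the uniqueness of that isomorphism, and it is not the mere bookkeeping you anticipate.

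The paper's argument rests on an observation you never make: the long exact cohomology sequence of $A\xrightarrow{w}B\to C\to A[1]$, together with $A\in\mathcal T^{\le r}$, $B\in\mathcal T^{\le r}\cap\mathcal T^{\ge r}$ and the epimorphism hypothesis on $\mathcal H^r(w)$, forces $\mathcal H^i(C)=0$ for all $i\ge r$, so $C\in\mathcal T^{\le r-1}$. Orthogonality then gives $\Hom(C,B)=0$, so $v'_*\colon\Hom(C,C')\to\Hom(C,A[1])$ is injective, and $v'\delta=0$ yields $\delta=0$ at once. Your route through the factorization $\delta=g\circ v$ and the vanishing $\Hom(A[1],B)=0$ does not close. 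From $v'gv=0$ and the long exact sequence of $\Hom(-,A[1])$ one only learns $v'g=\phi\circ w[1]$ for some $\phi\colon B[1]\to A[1]$; the epimorphism hypothesis (via $\Hom(B,A)\cong\Hom(\mathcal H^rB,\mathcal H^rA)$ in the heart) then gives $w[1]\circ\phi=0$, but that does \emph{not} imply $\phi\circ w[1]=0$. In fact the intermediate conclusion $g=0$ that you aim for cannot be established and is false in general: take $r=0$, $A=\Z^2$, $B=\Z$, $w=(1\ \ 0)$ in $D(\Z\text{-Mod})$, so $C=C'\cong\Z[1]$ with $v=v'$ the inclusion of the kernel; the zero morphism $\delta$ admits factorizations $\delta=gv$ with $g\ne0$ and $v'g\ne0$. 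One must argue about $\delta\in\Hom(C,C')$ directly, and for that one needs $C\in\mathcal T^{\le r-1}$ — which is exactly where the epimorphism hypothesis does its real work in the paper's proof.
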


\begin{proof}
The existence of such an exact triangle is clear from the axioms of triangulated
categories. Note that $C\in \mathcal T^{<r} $. Uniqueness means that there exists a unique
dotted isomorphism $\alpha$ in a
commutative diagram with exact triangles as rows
\[
\xymatrix{
A \ar@{=}[d] \ar[r] &  B \ar@{=}[d] \ar[r] &  C \ar@{..>}[d]^{\alpha} \ar[r]  &  A[1] \ar@{=}[d] \\
 A \ar[r] &  B \ar[r] & C' \ar[r] &  A[1]
}
\]
Existence and uniqueness follow from the exact sequence
\[
0= \Hom(C,B) \to \Hom(C,C') \to \Hom(C, A[1]) \to \Hom(C, B[1]) .
\]
\end{proof}

Now we discuss pro-sheaves on sites.
Let $\N$ be the category with the objects $\{1,2,3 , \ldots \}$ and morphisms $n_1 \to n_2$
for $n_1\ge n_2$. 
By the category of {\it pro-systems} ${\rm C}_\pro$, for a category $\C$, we mean the category of diagrams
in $\rm C$ with index category $\N$ and with morphisms $${\rm Mor}_{{\rm C}_\pro} (Y_\bull, Z_\bull) = \varprojlim_{n}
\varinjlim_{m} {\rm Mor}_{\rm C}(Y_m, Z_n).$$
\begin{defn} Let $\bS$ be a small site.
\begin{itemize}
\item[(a)] By $\Sh(\bS)$ we denote the category of sheaves of abelian groups on $\bS$.
 By $\C(\bS)$ we denote the category of unbounded complexes in $\Sh(\bS)$.
\item[(b)]
 By $\Sh_\pro(\mathbb S)$ we denote the category of pro-systems in $\Sh(\bS)$.
   \item[(c)]
By $\C_\pro ( \bS)$ we denote
  the category of pro-systems in $\C(\bS)$.
\item[(d)] By $\D_\pro(\mathbb S)$ we denote the Verdier localization of the homotopy
  category of $\C_\pro(\bS)$, where we kill objects which are represented by systems of complexes which have
  level-wise vanishing cohomology sheaves.  

\end{itemize}
\end{defn}

For the construction of Verdier localization in (d) see \cite[Sec.~2.1]{Ne}.

\begin{lem}\mbox{}
The triangulated category $\D_\pro(\mathbb S)$ has a natural $t$-structure $(D^{\le 0}(\bS),
D^{\ge 0}(\bS))$ with
$\sF_\bull\in \D_\pro^{\le 0}$ resp.\ $\sF_\bull\in \D_\pro^{\ge 0}$ if $\sF_\bull$ is
isomorphic in $\D_\pro(\mathbb S)$ to $\sF'_\bull$ with $\mathcal H^i(\sF'_n)=0$ for all $n\in \N$ and $i> 0$ resp.\ for $i<0$. 
 The $t$-structure has heart 
$\Sh_\pro(\mathbb S)$.
\end{lem}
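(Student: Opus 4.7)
\medskip

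\noindent\textbf{Proof plan.} The plan is to define the truncation functors level-wise on $\C_\pro(\bS)$, show that they descend to the Verdier localization $\D_\pro(\bS)$, and then verify the three axioms of a $t$-structure essentially by reducing them to the corresponding statements for $\D(\bS)$ applied level-wise.

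First, I would define $\tau^{\le 0}, \tau^{\ge 0}: \C_\pro(\bS) \to \C_\pro(\bS)$ by applying the usual truncation functors level by level: $(\tau^{\le 0}\sF_\bull)_n := \tau^{\le 0}\sF_n$, and similarly for $\tau^{\ge 0}$. These are functorial on $\C_\pro(\bS)$ because the truncation functors are functorial on $\C(\bS)$ and the Hom-sets in the pro-category are built by a $\varprojlim\varinjlim$ of Hom-sets in $\C(\bS)$. Since the truncation functors send a level-wise acyclic pro-complex to a level-wise acyclic pro-complex, and preserve level-wise quasi-isomorphisms, they descend to endofunctors of $\D_\pro(\bS)$. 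The level-wise exact triangle $\tau^{\le 0}\sF_n \to \sF_n \to \tau^{\ge 1}\sF_n \xrightarrow{[1]}$ assembles into a triangle in $\D_\pro(\bS)$, giving the decomposition axiom.

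Next, I would verify the orthogonality axiom: for $X \in D^{\le 0}_\pro$ and $Y \in D^{\ge 1}_\pro$, I must show $\Hom_{\D_\pro(\bS)}(X,Y) = 0$. Up to isomorphism we may assume each $X_n$ has cohomology concentrated in degrees $\le 0$ and each $Y_n$ in degrees $\ge 1$. A morphism $f: X \to Y$ in $\D_\pro(\bS)$ is represented by a roof $X \xleftarrow{s} Z \xrightarrow{g} Y$ in $K(\C_\pro(\bS))$ where $\cone(s)$ is level-wise acyclic. Applying $\tau^{\le 0}$ to the whole roof (which makes sense because $\tau^{\le 0}$ is a functor on $K(\C_\pro(\bS))$ that sends level-wise quasi-isomorphisms to level-wise quasi-isomorphisms), I may replace $Z$ by $\tau^{\le 0}Z$, without changing $s$ up to isomorphism in $\D_\pro(\bS)$ (since $X \in D^{\le 0}_\pro$). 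But then $g: \tau^{\le 0} Z \to Y$ consists level-wise of a morphism from a complex concentrated in degrees $\le 0$ to one concentrated in degrees $\ge 1$, and the standard argument in $K(\C(\bS))$ shows each level is zero-homotopic, in a way natural in the pro-structure; passing to $\varprojlim\varinjlim$ yields $g=0$ in $K(\C_\pro(\bS))$, hence $f=0$. The shift conditions $D^{\le 0}[1] \subset D^{\le 0}$ and $D^{\ge 0} \subset D^{\ge 0}[1]$ are immediate from the level-wise definition.

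Finally, for the heart: an object lies in $D_\pro^{\le 0} \cap D_\pro^{\ge 0}$ if it is isomorphic in $\D_\pro(\bS)$ to a pro-complex $\sF'_\bull$ with $\mathcal H^i(\sF'_n)=0$ for all $i\ne 0$, i.e.\ each $\sF'_n$ is quasi-isomorphic to the sheaf $\mathcal H^0(\sF'_n)$ placed in degree $0$. I would send this to the pro-sheaf $n \mapsto \mathcal H^0(\sF'_n)$ in $\Sh_\pro(\bS)$, and check this is an equivalence using that the inclusion $\Sh(\bS) \hookrightarrow \D(\bS)$ is fully faithful onto the heart level-wise, and that the level-wise Hom computations pass to the pro-$\varprojlim\varinjlim$.

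The main obstacle will be step two, the Hom-vanishing: morphisms in $\D_\pro(\bS)$ are not computed purely level-wise but involve a double limit over reindexings, so one must argue that truncation of a roof $X \xleftarrow{s} Z \xrightarrow{g} Y$ can be done functorially at the pro-level and that the level-wise null-homotopies of $g$ assemble compatibly in the pro-category. The key technical point is that null-homotopies of a morphism of complexes form a torsor under cocycles in a complex of Hom-groups, and the $\varprojlim\varinjlim$ construction interacts well enough with these to conclude.
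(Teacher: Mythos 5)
The paper does not give a proof of this lemma --- it is stated as a routine consequence of the Verdier localization construction, so there is no argument of the paper's to compare against. Your level-wise truncation approach is the natural and correct one, and the overall structure of the argument is sound.

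There is, however, a small but real gap in the orthogonality step. You reduce to a roof $X \xleftarrow{s} Z \xrightarrow{g} Y$ with each $X_n$ cohomologically concentrated in degrees $\le 0$ and each $Y_n$ cohomologically concentrated in degrees $\ge 1$, and you replace $Z$ by $\tau^{\le 0}Z$. You then assert that $g$ becomes ``a morphism from a complex concentrated in degrees $\le 0$ to one concentrated in degrees $\ge 1$.'' But $Y_n$ need not be \emph{literally} concentrated in degrees $\ge 1$; you only assumed its cohomology is. So the map $\tau^{\le 0}Z \to Y$ is not zero on the nose, nor is it obviously null-homotopic level-wise. Two clean fixes: either (i) further replace $Y$ by $\tau^{\ge 1}Y$ (a level-wise quasi-isomorphism because $Y \in D^{\ge 1}_\pro$), after which the chain map $\tau^{\le 0}Z \to \tau^{\ge 1}Y$ is zero in every level for degree reasons; or (ii) observe that any chain map $\tau^{\le 0}Z \to Y$ automatically factors through the subcomplex $\tau^{\le 0}Y \subset Y$ (since $d^0_Y \circ g^0 = g^1\circ d^0 = 0$), and $\tau^{\le 0}Y$ is level-wise acyclic, hence isomorphic to $0$ in $\D_\pro(\bS)$.

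This matters for your closing remark as well: you flag a worry about whether level-wise null-homotopies of $g$ can be assembled coherently across the pro-system, and suggest this requires a torsor/$\varprojlim\varinjlim$ argument. With either fix above, the chain map is literally zero (or factors through a level-wise acyclic object), so no homotopies need to be chosen or assembled, and the $\lim^1$-type obstruction you are worried about simply never arises. This is the standard way to avoid exactly that issue, and you should use it rather than trying to patch together homotopies. For the heart, your sketch is fine once the truncation functors $\tau^{\le 0}, \tau^{\ge 0}$ are in place on $\D_\pro(\bS)$: an object of $D^{\le 0}_\pro \cap D^{\ge 0}_\pro$ is isomorphic to $\tau^{\ge 0}\tau^{\le 0}$ of any of its level-wise representatives, which is the pro-sheaf $n\mapsto \mathcal H^0(\sF'_n)$ placed in degree $0$; full faithfulness of $\Sh_\pro(\bS) \to \D_\pro(\bS)$ then follows by the same truncation computation.
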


We write $D^+_\pro(\bS)$, $D^-_\pro(\bS)$ and $D^b_\pro(\bS)$ for the bounded above, bounded below and
bounded objects in $D(\bS)$ with respect to the $t$-structure.

\section{Homotopical algebra}\label{sec.hoto}

\noindent
In this section we introduce certain standard model categories of pro-systems 
over a small site $\mathbb S$.
We uniquely specify our model structures by explaining what are the cofibrations and weak
equivalences. The fibrations are then defined to be the maps which have the right
lifting property with respect to all trivial cofibrations. Our definition of closed model
category is as in~\cite{Q}. 

\begin{defn}\mbox{}
\begin{itemize}
\item[(a)] Let $\S(\bS)$ be the proper closed simplicial model category of simplicial
  presheaves on $\bS$, where cofibrations are injective morphisms of presheaves and weak
  equivalences are those maps which induce isomorphisms on homotopy sheaves,
  cf.~\cite[Sec.~2]{Jar1}. 
\item[(b)]
We endow the category of unbounded complexes of abelian sheaves $\C(\bS)$ with the proper closed simplicial
model structure where cofibrations are injective morphisms and weak equivalences are those
maps which induce isomorphisms on cohomology sheaves, see~ App.\ C in \cite{CTHK} and Thm.~2.3.13 in \cite{Hov}.
\end{itemize}
\end{defn}

Explicit characterizations of the classes of fibrations for the two model categories are given in the
references. 
 For the crucial notion  of level representation in the
following definitions see \cite[Sec. 2.1]{Isa1}.

\begin{defn}\label{hoto.defnpro}  \mbox{}
\begin{itemize}
\item[(a)]
By $\S_\pro(\bS)$ we denote the proper closed simplicial model category of pro-systems of simplicial
presheaves on $\bS$,
where cofibrations are those maps which have a level representation by levelwise injective
morphisms and where weak equivalences are those maps which have a level representation
which induces a levelwise isomorphism on homotopy sheaves.  
\item[(b)] 
We endow $\C_\pro(\bS)$ with the proper closed simplicial model structure,
where cofibrations are those maps which have a level representation by levelwise injective
morphisms and where weak equivalences are those maps which have a level representation
which induces a levelwise isomorphism on cohomology sheaves. 
\end{itemize}
\end{defn}

\begin{nota}\label{hoto.homcat}
For a model category $\rm M$ we write $\rm hM$ for the associated homotopy category. 
\end{nota}

The pro-model structures in Definition~\ref{hoto.defnpro} are due to Isaksen \cite{Isa1}.
He uses all pro-systems indexed by small cofiltering categories, whereas we allow only
$\N$ as index category. In fact all his definitions and proofs work in a simpler
way in this setting, except for the following points:
 In our model categories only countable inverse 
limits and finite direct limits exist, cf.\ \cite[Sec.\ 11]{Isa2}. Also for our categories
the simplicial functors $K\otimes -$ resp.\ $(-)^K$ exist only for a finite resp.\ countable simplicial set
$K$. This is why we use Quillen's original notion of a closed
simplicial model category \cite{Q}.
 Note that Isaksen calls his pro-category strict model category.

Isaksen gives the following concrete description of fibrations.

\begin{prop}\label{hoto.expfib}
(Trivial) fibrations in $\S_\pro(\bS)$ resp.\ $\C_\pro(\bS)$ are precisely those maps, which are
retracts of maps having a level representation $f:X_\bull \to Y_\bull $ such that
\[
f_n:X_n \to X_{n-1} \times_{Y_{n-1}} Y_n 
\] 
are (trivial) fibrations in $\S (\bS)$ resp.\ $\C(\bS)$ for $n\ge 1$. Here we let $X_0=Y_0$  be the
final object.
\end{prop}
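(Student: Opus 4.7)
The plan is to establish the two directions separately, following the standard pattern for the strict model structure on $\N$-indexed pro-categories, due to Edwards--Hastings and Isaksen. Let $\mathbb M$ denote either of the underlying closed simplicial model categories $\S(\bS)$ or $\C(\bS)$; call a level map $f: X_\bull \to Y_\bull$ a \emph{matching (trivial) fibration} if each $f_n: X_n \to X_{n-1} \times_{Y_{n-1}} Y_n$ (with $X_0 = Y_0$ the terminal object) is a (trivial) fibration in $\mathbb M$. The proposition will follow once I show, on the one hand, that matching (trivial) fibrations have the right lifting property against all (trivial) cofibrations, and, on the other hand, that every (trivial) fibration in the pro-category is a retract of a matching one.

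First I would show that every matching (trivial) fibration $p: X_\bull \to Y_\bull$ has the right lifting property against every (trivial) cofibration $i: A_\bull \to B_\bull$. Since retracts of maps with the right lifting property again have the right lifting property, this will guarantee that any retract of a matching (trivial) fibration is a genuine (trivial) fibration. After replacing $i$ by a level representation with levelwise (trivial) injections (and dealing with the retract by the usual retract-of-lift argument), one constructs the diagonal $h: B_\bull \to X_\bull$ inductively in $n$: assuming $h_{<n}$ has been built compatibly, one is faced at level $n$ with a commutative square in $\mathbb M$ whose left edge is $i_n: A_n \to B_n$ and whose right edge is $f_n: X_n \to X_{n-1} \times_{Y_{n-1}} Y_n$, and the hypothesis on $f_n$ furnishes $h_n$.

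For the converse I would use the standard factorization-and-retract trick. Given any $f: X_\bull \to Y_\bull$ in the pro-category, I construct inductively a factorization $X_\bull \xrightarrow{j} Z_\bull \xrightarrow{p} Y_\bull$ in which $j$ is levelwise a (trivial) cofibration in $\mathbb M$ and $p$ is a matching (trivial) fibration: having built $Z_{<n}$ together with $j_{<n}$ and $p_{<n}$, apply the factorization axiom of $\mathbb M$ to the canonical map $X_n \to Z_{n-1} \times_{Y_{n-1}} Y_n$ to produce
\[
X_n \xrightarrow{j_n} Z_n \xrightarrow{q_n} Z_{n-1} \times_{Y_{n-1}} Y_n
\]
with $j_n$ a (trivial) cofibration and $q_n$ a (trivial) fibration, and set $p_n$ to be the composite of $q_n$ with the projection to $Y_n$. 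Then $j$ is a (trivial) cofibration and $p$ is a matching (trivial) fibration by construction. If $f$ itself is a (trivial) fibration, the lifting property established in the previous step (applied to the square with horizontal maps $\mathrm{id}_{X_\bull}$ and $\mathrm{id}_{Y_\bull}$ and vertical maps $j$ and $f$) supplies a retraction $r: Z_\bull \to X_\bull$ of $j$ over $Y_\bull$, exhibiting $f$ as a retract of $p$.

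The principal obstacle will be the bookkeeping around retracts and level representations: cofibrations in $\S_\pro(\bS)$ (resp.\ $\C_\pro(\bS)$) are by definition only retracts of level representations by levelwise injections, so in the lifting step one must first pass to such a representation, build the lift there, and then transport it through the retract diagram; and one must check that the tower $Z_\bull$ produced by the inductive factorization indeed has the transition maps it is supposed to have (namely the composites $Z_n \to Z_{n-1} \times_{Y_{n-1}} Y_n \to Z_{n-1}$) so that it represents a bona fide pro-object. Once these two points are set up, both halves of the argument reduce to iterated application of the model category axioms of $\mathbb M$ level by level.
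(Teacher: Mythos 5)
Your proposal takes a genuinely different organizational route from the paper, which offers only a sketch: following Isaksen, the paper builds the pro-model structure by taking the matching condition of Proposition~\ref{hoto.expfib} as the \emph{definition} of fibration, verifies the model category axioms, and then invokes the fact that cofibrations and weak equivalences determine the fibrations. You instead take the model structure as given (fibrations defined by right lifting against trivial cofibrations) and attempt to verify the matching characterization directly. The second half of your argument is sound: factoring a map level by level into a levelwise trivial cofibration followed by a matching fibration, and then running the retract trick when the original map is already a fibration, works as you describe.

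The gap is in the first half. To show that a matching fibration has the right lifting property against an arbitrary trivial cofibration $i$ in $\S_\pro(\bS)$ or $\C_\pro(\bS)$, you propose to ``replace $i$ by a level representation with levelwise trivial injections.'' But the definitions of cofibration and weak equivalence in Definition~\ref{hoto.defnpro} each supply their own level representation, and there is no a priori reason that a single representation is simultaneously levelwise injective and levelwise a weak equivalence. Reconciling the two representations of a trivial cofibration is exactly the substantive point of Isaksen's construction; it is not the routine bookkeeping you describe. Your comment about ``retracts and level representations'' addresses only the easy pairing (plain cofibrations against matching \emph{trivial} fibrations, where a single representation by injections suffices), not the hard pairing of trivial cofibrations against matching fibrations. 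Without resolving this, the inductive construction of the diagonal never gets off the ground, so as written the proposal does not establish the proposition.
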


\smallskip

{\em Sketch of Isaksen's construction (Definition~\ref{hoto.defnpro})}.
In a first step one shows the two out of three property for weak equivalences. The key
lemma in this step is \cite[Lem.\ 3.2]{Isa1}, which is the only part of the construction
where Isaksen constructs a new non-trivial index category. For index category $\N$ the
argument simplifies. In a second step one shows the various left and right lifting
properties of a model category. Here one takes the description of fibrations given in
Proposition~\ref{hoto.expfib} as a definition and thereby also obtains a proof of this proposition.
\medskip

\begin{prop}\label{hoto.adj}\mbox{}
\begin{itemize}
\item[(a)]
There are Quillen adjoint functors 
\[
\xymatrix{
 \S_\pro(\bS)   \ar@<2pt>[r]  &  \C_\pro(\bS)  \ar@<2pt>[l]^{\rm K}
} 
\]
where the right adjoint $\rm K$ is the composition of the good truncation $\tau_{\le 0}$ and the
Eilenberg-MacLane space construction.
\item[(b)]
There is a canonical ismorphism of categories 
\begin{align*}
\D_\pro(\bS) &\xrightarrow{\simeq} {\rm hC}_\pro(\bS) 
\end{align*}
\item[(c)] There are Quillen adjoint functors
\[
\xymatrix{
\S (\bS)   \ar@<2pt>[r]  &  \S_\pro(\bS)  \ar@<2pt>[l]^-{\varprojlim} ,\\
\C (\bS)   \ar@<2pt>[r]  &  \C_\pro(\bS)  \ar@<2pt>[l]^-{\varprojlim},
} 
\] 
where the left adjoint is the constant pro-system functor and the right adjoint is the
inverse limit functor.
\end{itemize}
\end{prop}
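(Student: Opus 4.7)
The plan is to reduce each statement of Proposition~\ref{hoto.adj} to the known adjunctions and model structures on the non-pro categories $\S(\bS)$ and $\C(\bS)$, then to propagate the relevant properties levelwise using the explicit description of (trivial) cofibrations and the characterization of fibrations in Proposition~\ref{hoto.expfib}.

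For part (a), I would first recall the classical Dold--Kan style adjunction $\Z[-] \dashv \rm K$ between simplicial presheaves and unbounded complexes, where the left adjoint sends a simplicial presheaf $X$ to the normalized chain complex of the free simplicial abelian presheaf $\Z[X]$, and $\rm K = K\circ \tau_{\le 0}$ is the truncation followed by the Eilenberg--MacLane functor. Extend both functors to pro-systems levelwise; the adjunction identity follows termwise from the non-pro adjunction combined with the definition of morphisms in a pro-category. To verify Quillen-ness, I would check that the levelwise $\Z[-]$ sends a cofibration in $\S_\pro(\bS)$ (which by Definition~\ref{hoto.defnpro}(a) has a level representation by levelwise monomorphisms of presheaves) to a map with a level representation by levelwise injective maps of complexes, and similarly for trivial cofibrations (using that $\Z[-]$ preserves weak equivalences of simplicial presheaves). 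This is immediate from the levelwise characterization of cofibrations and weak equivalences in both pro-model categories.

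For part (b), the point is that the Verdier localization $\D_\pro(\bS)$ is defined by inverting maps whose cone is a pro-system representable by a levelwise acyclic complex, while the homotopy category $\rm hC_\pro(\bS)$ is the localization at weak equivalences of the model structure, which by Definition~\ref{hoto.defnpro}(b) are precisely the maps admitting a level representation inducing levelwise isomorphisms on cohomology sheaves. These two classes of morphisms have the same saturation in $\C_\pro(\bS)$: a map with a levelwise acyclic cone is isomorphic (in the pro-category) to one with a level representation whose cone is levelwise acyclic, hence to one inducing levelwise cohomology isomorphisms; conversely, a levelwise cohomology isomorphism has a levelwise acyclic cone. Applying the universal property of localization in both directions produces mutually inverse functors; the triangulated structure and the $t$-structure transport across.

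For part (c), the adjunction between constant pro-system $c$ and $\varprojlim$ is formal: $\mathrm{Mor}_{\S_\pro(\bS)}(c(X), Y_\bull) = \varprojlim_n \mathrm{Mor}(X, Y_n) = \mathrm{Mor}_{\S(\bS)}(X, \varprojlim Y_\bull)$, and analogously for complexes. For Quillen-ness I would verify that $c$ sends (trivial) cofibrations in $\S(\bS)$ or $\C(\bS)$ to (trivial) cofibrations in the corresponding pro-category: a constant pro-system of a levelwise injection is a level representation by levelwise injections, and likewise for weak equivalences, so this is again immediate from Definition~\ref{hoto.defnpro}. The main (very mild) obstacle is part (b), where one must be careful that the Verdier localization and the model-theoretic localization agree despite the fact that not every object of $\C_\pro(\bS)$ has a level representation of any prescribed form; this is handled by the fact that any morphism in the pro-category can be replaced up to isomorphism by a strict level map, so the two classes of weak equivalences agree after saturation.
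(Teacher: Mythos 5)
The paper states Proposition~\ref{hoto.adj} without proof — it appears in the homotopical algebra appendix as a standard fact, with the model structures borrowed from Isaksen (\cite{Isa1}, \cite{Isa2}) and the localization machinery left implicit. Your argument is correct and is the expected one: establish the non-pro adjunctions, extend levelwise using the explicit formula for pro-morphisms, and verify Quillen-ness by checking that the left adjoints preserve cofibrations and trivial cofibrations through the levelwise characterizations of Definition~\ref{hoto.defnpro}. One small technical point worth making explicit in part (a): a trivial cofibration in $\S_\pro(\bS)$ is witnessed by one level representation for the cofibration property (levelwise monos) and possibly a \emph{different} level representation for the weak equivalence property (levelwise homotopy-sheaf isomorphisms); your argument is unaffected since $N\Z[-]$ preserves monomorphisms and local weak equivalences separately, and ``cofibration $+$ weak equivalence $=$ trivial cofibration'' is a property of the morphism, not of a chosen representation. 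In part (b), you correctly flag the genuine subtlety — not every pro-morphism is strict — and resolve it by the standard reindexing fact that any pro-morphism is isomorphic to a level map; together with the observation that a chain homotopy equivalence is a special case of a levelwise quasi-isomorphism, this identifies the Verdier localization of the naive homotopy category of $\C_\pro(\bS)$ at the level-acyclic objects with the model-theoretic localization at weak equivalences. Part (c) is a routine verification and you handle it correctly. Overall the proposal is sound and fills in precisely the kind of detail the paper elects to leave implicit.
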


\begin{nota} \mbox{}
\begin{itemize}
\item
We write
\[
{\rm K}: {\rm hC}_\pro(\bS) \to {\rm hS}_\pro (\bS)
\]
for the functor induced by $\rm K: \C_\pro (\bS ) \to \S_\pro( \bS)$.
\item
We write $[Y_1,Y_2]$ for the set of morphisms from $Y_1$ to $Y_2$ in the homotopy category.
\item
 The right derived functor ${\rm holim}: {\rm hS}_\pro(\bS) \to {\rm hS}_\pro(\bS ) $ of
 $\varprojlim: \S_\pro(\bS )\to \S (\bS)$ is called homotopy inverse
 limit. By $R \varprojlim : \D_\pro(\bS) \to \D(\bS)$ we denote the right derived functor
 of $\varprojlim: \C_\pro(\bS )\to \C (\bS)$.
\end{itemize} 
\end{nota}

There is a standard method for calculating the derived inverse limit $R^i \varprojlim:
\Sh_\pro (\bS) \to \Sh(\bS)$ which shows in particular that $R^i \varprojlim =0$ for
$i>1$, see \cite[Sec. 3.5]{Weib}.

\begin{defn}\label{hoto.contcoho}
We define continuous cohomology of $\sF_\bull\in \D_\pro(\bS)$ by
\[
H^i_\cont(\bS , \sF_\bull ) = [\Z[-i], \sF_\bull  ] ,
\]
where $\Z$ denotes the constant sheaf of integers.
\end{defn}
Continuous cohomology of sheaves was first studied in \cite{Ja}. 
Note that we have a short exact sequence
\begin{equation}\label{hom.lim1}
0\to \varprojlim_n{\!}^1 H^{i-1}(\mathbb S , \sF_n )  \to H_\cont^i(\mathbb S, \sF_\bull)
\to \varprojlim_n H^i(\mathbb S, \sF_n ) \to 0.
\end{equation}

\begin{lem}\label{hoto.deschc}
For $\sF_\bull\in \D_\pro^+(\bS)$ there is a convergent
 spectral sequence
\[
E_2^{p,q} = H^p_\cont (\bS, \sH^q(\sF_\bull)) \Longrightarrow H^{p+q}_\cont (\bS , \sF_\bull)
\]
with differential $d_r:E_r^{p,q} \to E_r^{p+r,q - r+1}$.
\end{lem}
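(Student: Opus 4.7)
The strategy is to adapt the construction of the hypercohomology spectral sequence $E_2^{p,q} = H^p(X,\sH^q(\sF^\bullet)) \Rightarrow H^{p+q}(X,\sF^\bullet)$ to the pro-setting, by building a Cartan-Eilenberg resolution compatibly across the tower and reading off the spectral sequence from an appropriate filtration of the resulting double complex.

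First, I would replace $\sF_\bull$ by a fibrant representative $\sI_\bull\in\C_\pro(\bS)$ in the model structure of Definition~\ref{hoto.defnpro}(b). By Proposition~\ref{hoto.expfib} this means each $\sI_n$ is K-injective and each transition $\sI_{n+1}\to\sI_n$ is a fibration; combined with the adjunction of Proposition~\ref{hoto.adj}(c), this yields the identification
\[
H^i_\cont(\bS,\sF_\bull) \;=\; [\Z[-i],\sF_\bull] \;=\; H^i(\Gamma(\bS,\varprojlim_n\sI_n)).
\]
Next, using that $\C(\bS)$ admits functorial Cartan-Eilenberg resolutions, one would lift to a compatible tower of CE resolutions $\sI_n\to\sJ_n^{\bullet,\bullet}$, apply $\Gamma(\bS,-)$, and form the double complex
\[
K^{\bullet,\bullet} \;=\; \varprojlim_n\,\Gamma(\bS,\sJ_n^{\bullet,\bullet}),
\]
whose total complex computes $H^\bullet_\cont(\bS,\sF_\bull)$. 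The spectral sequence arising from the filtration of $K^{\bullet,\bullet}$ by its Cartan-Eilenberg resolution direction has graded pieces given by injective resolutions of the pro-cohomology sheaves; after the standard CE identification and bookkeeping of the $\varprojlim^1$-terms via \eqref{hom.lim1}, the $E_2$-page reads $H^p_\cont(\bS,\sH^q(\sF_\bull))$. Convergence is strong, and the differentials $d_r:E_r^{p,q}\to E_r^{p+r,q-r+1}$ have the claimed bidegree by the general formalism of the second spectral sequence of a filtered double complex; the hypothesis $\sF_\bull\in\D^+_\pro(\bS)$ ensures that the filtration on $K^{p+q}$ is bounded below in each total degree.

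The main obstacle in this plan lies in the second step: verifying that the inverse limit interacts well enough with the levelwise Cartan-Eilenberg machinery that the cohomology pro-sheaves $\sH^q(\sF_\bull)$ (viewed as objects of the heart $\Sh_\pro(\bS)$ of the $t$-structure) are correctly resolved, so that the $E_2$-page really computes their continuous cohomology rather than some weaker naive inverse-limit combination of the individual $H^p(\bS,\sH^q(\sI_n))$. Careful control of the $\varprojlim^1$-contributions in each bi-degree, mediated by \eqref{hom.lim1}, is the technical heart of the argument, and is where the fibration property of the transition maps $\sI_{n+1}\to\sI_n$ furnished by the first step enters.
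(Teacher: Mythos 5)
The paper states Lemma~\ref{hoto.deschc} without proof, so there is no text to compare against; here is an assessment of your argument on its own terms.

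Your overall strategy (fibrant replacement, then Cartan--Eilenberg, then the second spectral sequence of the resulting double complex) is a reasonable route, and the first step is correct: a fibrant $\sI_\bull$ has, up to retract and reindexing, levelwise K-injective terms with fibration transition maps, and $\varprojlim_n \sI_n$ is K-injective, so $H^i_\cont(\bS,\sF_\bull)=H^i\Gamma(\bS,\varprojlim_n\sI_n)$. But the gap you flag at the end is a genuine one, not a routine detail, and your argument as written does not close it. Fibrations in the injective model structure on $\C(\bS)$ are not mere surjections, and nothing in the functorial Cartan--Eilenberg construction guarantees that the transition maps $\sJ_{n+1}^{\bullet,q}\to\sJ_n^{\bullet,q}$ remain fibrations (or even degreewise split surjections) after resolving. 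If they are not, then $\varprojlim_n\Gamma(\bS,\sJ_n^{\bullet,q})$ does not compute $R\varprojlim$ of the tower, and the column cohomology of $K^{\bullet,\bullet}$ gives only the naive $\varprojlim_n H^p(\bS,\sH^q(\sF_n))$, dropping exactly the $\varprojlim^1$ contribution of \eqref{hom.lim1} that distinguishes $H^p_\cont(\bS,\sH^q(\sF_\bull))$ from the inverse limit of the levelwise groups. In addition, the CE machinery needs bounded-below input, whereas a fibrant replacement $\sI_\bull$ of $\sF_\bull\in\D^+_\pro(\bS)$ has no reason to be levelwise bounded below; one must first arrange a bounded-below fibrant model, which is possible but should be said.

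A cleaner route, which avoids both difficulties, is to use the $t$-structure on $\D_\pro(\bS)$ introduced just before the lemma. For $\sF_\bull\in\D^+_\pro(\bS)$, the truncation tower $\tau_{\le a}\sF_\bull\to\tau_{\le a+1}\sF_\bull\to\cdots$ has cofibers $\sH^q(\sF_\bull)[-q]$ in the heart $\Sh_\pro(\bS)$. Applying the cohomological functor $[\Z[-i],-]$ to this Postnikov filtration produces an exact couple whose $E_1$ (equivalently, after the standard reindexing, $E_2$) page is $H^p_\cont(\bS,\sH^q(\sF_\bull))$ with the stated differential bidegree, and the abutment is $H^{p+q}_\cont(\bS,\sF_\bull)$. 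Boundedness below of $\sF_\bull$ makes the filtration exhaustive and bounded in each total degree, giving convergence. This argument makes no reference to levelwise injective or Cartan--Eilenberg resolutions and so circumvents the $\varprojlim^1$ bookkeeping entirely; if you wish to keep the CE-style proof, you must supplement it by replacing the levelwise CE towers with ones having (degreewise split) surjective transitions, e.g.\ by composing with a telescope construction, and only then can you invoke \eqref{hom.lim1} to identify the column cohomology with continuous cohomology of $\sH^q(\sF_\bull)$.
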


\begin{lem}\label{hoto.descentBK} 
Let $C_\bull$ be a pointed object in $ \S_\pro(\bS)$ and assume that  $\tilde\pi_1(\C_n)$ is commutative for any
 $n\ge 1$. If there is $N$ such that $H^i_\cont (\bS
, \tilde\pi_j(C_\bull)) =0$ for $i>N$ and $j>0$, then there is a completely convergent  Bousfield-Kan spectral sequence
\[
E^{s,t}_2 = H^{s}_\cont (\bS, \tilde{\pi}_t (C_\bull) ) \Longrightarrow [S^{t-s}, C_\bull]
\quad \quad \text{ with }\quad t\ge s
\]
and differential $d_r:E_r^{s,t} \to E_r^{s+r,t+r-1}$.
\end{lem}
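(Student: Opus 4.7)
The strategy is the classical Bousfield--Kan approach adapted to the pro-simplicial-sheaf setting: combine, on the one hand, the descent spectral sequence for each level of the tower with, on the other, a homotopy-inverse-limit spectral sequence for the pro-system itself.

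First, replace $C_\bull$ by a fibrant model $C'_\bull$ in $\S_\pro(\bS)$. By Proposition~\ref{hoto.expfib} one may, up to retract, arrange that $C'_\bull$ is an actual tower $\cdots \to C'_{n+1} \to C'_n \to \cdots \to *$ of global fibrations of pointed globally fibrant simplicial presheaves in Jardine's model structure on $\S(\bS)$. The commutativity of $\tilde\pi_1(C_n)$ is precisely what is needed to make the standard descent (Brown--Gersten) spectral sequence at each level
\[
E_2^{p,q}(n) = H^p(\bS, \tilde\pi_q(C_n)) \Longrightarrow \pi_{q-p}\bigl(R\Gamma(\bS, C'_n)\bigr), \qquad q \ge p,
\]
an honest spectral sequence of abelian groups with differential $d_r\colon E_r^{p,q} \to E_r^{p+r, q+r-1}$.

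Second, since the transition maps of $C'_\bull$ are global fibrations between fibrant simplicial presheaves, the tower $\{R\Gamma(\bS, C'_n)\}_n$ is a tower of fibrations of pointed fibrant simplicial sets whose homotopy inverse limit computes $[S^{t-s}, C_\bull]_{\mathrm{hS}_\pro(\bS)}$ by the adjunction in Proposition~\ref{hoto.adj}(c). For the abutment one has the Milnor short exact sequence
\[
0 \to \varprojlim_n\!{}^1\, \pi_{k+1}\bigl(R\Gamma(\bS, C'_n)\bigr) \to [S^k, C_\bull] \to \varprojlim_n \pi_k\bigl(R\Gamma(\bS, C'_n)\bigr) \to 0.
\]
Assembling the tower of descent spectral sequences into an exact couple and identifying its $E_2$-term via the defining short exact sequence~\eqref{hom.lim1} for continuous cohomology (Definition~\ref{hoto.contcoho}) then yields the single spectral sequence
\[
E_2^{s,t} = H^s_\cont(\bS, \tilde\pi_t(C_\bull)) \Longrightarrow [S^{t-s}, C_\bull],
\]
with the differential $d_r\colon E_r^{s,t} \to E_r^{s+r, t+r-1}$ inherited from each level.

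The main obstacle is strong (``completely'' convergent) convergence, which is not automatic for Bousfield--Kan-type spectral sequences either at the level of a single simplicial sheaf or after passing to inverse limits. The hypothesis $H^i_\cont(\bS, \tilde\pi_j(C_\bull)) = 0$ for $i > N$ and $j > 0$ forces the $E_2$-page to vanish outside the finite strip $\{0 \le s \le N,\; s \le t,\; t>0\}$, so that only boundedly many terms lie on any diagonal $t - s = k$. This uniform bound simultaneously ensures convergence of each level's descent spectral sequence in a finite horizontal range and provides the Mittag--Leffler condition needed to interchange $\varprojlim_n$ with the level spectral sequences without introducing additional $\varprojlim^1$ obstructions. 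An alternative, perhaps more direct route would be to construct the spectral sequence from the skeletal filtration of $C'_\bull$ viewed as a pro-simplicial object in $\Sh_\pro(\bS)_\bullet$ along the lines of \cite[IX.4]{BoK}, and then identify the $E_2$-page via the hypercohomology spectral sequence of Lemma~\ref{hoto.deschc} applied to the normalized chain complex of $C'_\bull$ in $\D_\pro(\bS)$.
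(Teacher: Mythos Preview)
The paper states this lemma without proof, treating it as a standard consequence of the Bousfield--Kan tower machinery of \cite[IX.4--5]{BoK} transported to the pro-model structure set up in this appendix. There is therefore no detailed argument in the paper to compare against; your sketch is more than the authors supply.

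That said, your main line of argument has a genuine gap. The step ``assembling the tower of descent spectral sequences into an exact couple and identifying its $E_2$-term via \eqref{hom.lim1}'' is not justified: a tower of exact couples $(D(n),E(n))$ does not formally yield a single exact couple with $E_2$-page equal to continuous cohomology, and inverse limits of spectral sequences in general introduce $\varprojlim^1$-terms at every page, not only at the abutment. Your convergence argument compounds this: you invoke the vanishing hypothesis to bound each \emph{level-wise} descent spectral sequence, but the hypothesis concerns $H^i_\cont$, which by \eqref{hom.lim1} controls only $\varprojlim_n$ and $\varprojlim^1_n$ of the groups $H^i(\bS,\tilde\pi_j(C_n))$ and says nothing about them individually. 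So neither the finite-strip claim for fixed $n$ nor the Mittag--Leffler claim is actually established.

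The ``alternative'' you mention at the end is the correct route and should be the main argument. After fibrant replacement as you describe, form the Postnikov tower $\{P_tC'_\bull\}_{t\ge 0}$ \emph{inside} $\S_\pro(\bS)$ (each $P_tC'_\bull$ being itself a pro-object) and apply $[S^k,-]$. The homotopy fibre of $P_tC'_\bull\to P_{t-1}C'_\bull$ is ${\rm K}(\tilde\pi_t(C_\bull)[t])$ as a pro-object, so by the Quillen adjunction of Proposition~\ref{hoto.adj}(a) one has $[S^k,\text{fibre}]=H^{t-k}_\cont(\bS,\tilde\pi_t(C_\bull))$ \emph{directly}, with no limit interchange needed. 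The resulting tower spectral sequence then has the stated $E_2$-page by construction, and the vanishing hypothesis immediately gives the finite horizontal strip required for complete convergence in the sense of \cite[IX.5.4]{BoK}.
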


Here $\tilde\pi_i$ is the pro-system of sheaves of homotopy groups and $H^0_\cont$ of the
sheaf of sets $\tilde \pi_0(C_\bull)$ means simply global sections of the inverse limit. The indexing of the
spectral sequence is as in \cite[Sec.~IX.4.2]{BoK}.

For $C_\bull = {\rm K} (\sF_\bull)$ with $\rm K$ as in Proposition~\ref{hoto.adj}(a) and
$\sF_\bull$ as in Lemma~\ref{hoto.deschc} there is a natural 
morphism \[E^{s,t}_r(\sF_\bull)\to E^{s,t}_r( {\rm
  K}(\sF_\bull) ) \quad \quad  (t\ge s, r\ge 2),  \]
compatible with the differential $d_r$, 
 where the left side is a Bousfield-Kan renumbering of the
spectral sequence of Lemma~\ref{hoto.descentBK} and the right side is the spectral
sequence of Lemma~\ref{hoto.descentBK}. This morphism is injective for $t=s$ and bijective
for $t>s$. 

Lemma~\ref{hoto.descentBK} implies in particular the following lemma.

\begin{lem}\label{hoto.desciso}
Let $C_\bull, C'_\bull\in \S_\pro(\bS) $ satisfy the assumptions of
Lemma~\ref{hoto.descentBK} and let $\Psi: C_\bull \to C'_\bull$ be a morphism.
\begin{itemize}
\item[(a)]
 Assume that for an integer $n\ge 1$ the induced map 
\begin{equation}\label{hoto.eqshm}
 H^{s}_\cont (\bS, \tilde{\pi}_t (C_\bull) )  \xrightarrow{\Psi_*}  H^{s}_\cont (\bS, \tilde{\pi}_t (C'_\bull) ), 
\end{equation}
 is injective for all $t,s$ with $t-s=n-1$, bijective for
$t-s=n$ and surjective for $t-s=n+1$. Then $\Psi_*:[S^{n}, C_\bull]\to [S^{n}, C'_\bull]$
is an isomorphism.
\item[(b)]
Assume that \eqref{hoto.eqshm} is surjective for $t-s=1$ and injective for $t=s$. Then
 $\Psi_*:[S^{0}, C_\bull]\to [S^{0}, C'_\bull]$ is injective. 
\end{itemize}
\end{lem}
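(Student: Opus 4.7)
The plan is to invoke the Bousfield--Kan spectral sequence of Lemma~\ref{hoto.descentBK} for both $C_\bull$ and $C'_\bull$, use $\Psi$ to get a morphism of spectral sequences $E_r^{s,t}(\Psi)\colon E_r^{s,t}(C_\bull)\to E_r^{s,t}(C'_\bull)$, and then run a Zeeman-style comparison. Since the hypothesis $H^i_{\cont}(\bS,\tilde\pi_j(C_\bull))=0$ for $i>N$ makes both spectral sequences bounded in $s$ and completely convergent (as stated in Lemma~\ref{hoto.descentBK}), all the convergence subtleties for Bousfield--Kan are harmless, and the filtration $F^\bulle[S^n,-]$ is finite in each degree. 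All relevant groups are abelian thanks to the commutativity assumption on $\tilde\pi_1$.

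For part (a), I would prove by induction on $r\ge 2$ that, simultaneously on three consecutive diagonals, the map $E_r^{s,t}(\Psi)$ is injective for $t-s=n-1$, bijective for $t-s=n$, and surjective for $t-s=n+1$. The base case $r=2$ is the hypothesis. The inductive step rests on the fact that the differential $d_r$ moves down one diagonal: from $(s,t)$ on diagonal $d$ to $(s+r,t+r-1)$ on diagonal $d-1$, and incoming $d_r$ arrives from diagonal $d+1$. Hence on diagonal $n$, the kernel of the outgoing $d_r$ is controlled by bijection on diagonal $n$ and injection on diagonal $n-1$, while the image coming in is controlled by surjection on diagonal $n+1$ and bijection on diagonal $n$; both combine via elementary diagram chases to yield bijection on $E_{r+1}$ diagonal $n$. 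On diagonal $n-1$, no information on diagonal $n-2$ is needed because we only need injectivity (kernels inherit injectivity automatically), and the incoming differentials from diagonal $n$ are controlled by bijection there plus injection on diagonal $n-1$ itself. On diagonal $n+1$ a symmetric argument gives surjectivity; here no info on diagonal $n+2$ is required since images inherit surjectivity. Passing to $E_\infty$ on diagonal $n$ we get that $E_\infty^{s,n+s}(\Psi)$ is bijective for every $s$, and a finite induction up the filtration $F^{s+1}\inj F^s\surj E_\infty^{s,n+s}$ via the 5-lemma yields that $\Psi_*\colon [S^n,C_\bull]\to[S^n,C'_\bull]$ is an isomorphism.

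Part (b) follows by the same method, now working with only two diagonals ($t=s$ and $t-s=1$) and concluding only injectivity at $E_\infty$ on the diagonal $t=s$. The inductive hypothesis to carry along is: $\Psi_*$ is injective on $E_r^{s,s}$ and surjective on $E_r^{s,s+1}$. Injectivity on $E_{r+1}^{s,s}$ needs no information below the diagonal $t=s$ because outgoing differentials only enlarge the kernel, while the quotient by incoming images is controlled exactly by surjectivity on diagonal $1$ together with injectivity on diagonal $0$; surjectivity on $E_{r+1}^{s,s+1}$ similarly needs only the two diagonals, since outgoing $d_r$ lands on diagonal $0$ where injectivity of $\Psi_*$ suffices to transport kernel information back, and incoming images preserve surjectivity for free. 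The 5-lemma up the filtration of $[S^0,C_\bull]$ then gives injectivity of $\Psi_*$ on $[S^0,-]$. The main technical point I expect to be delicate is purely bookkeeping: making sure that in the edge case $n=0$ (part (b)) the filtration on $[S^0,C_\bull]$ is indeed the one with graded pieces $E_\infty^{s,s}$ and that the $5$-lemma chase is legitimate, since this is the fringe of the Bousfield--Kan spectral sequence; but the commutativity hypothesis on $\tilde\pi_1$ and the finite-length filtration coming from the bounded cohomological dimension together make all steps linear.
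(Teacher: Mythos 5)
Your proof is correct and follows precisely the route the paper intends: the paper offers no written proof but simply asserts that Lemma~\ref{hoto.descentBK} implies the statement, which means exactly the Zeeman-style comparison of the two Bousfield--Kan spectral sequences that you carry out in detail. Your bookkeeping of the diagonals, the propagation of injectivity/surjectivity/bijectivity through the $E_r$-pages, and the final passage up the finite filtration on $[S^n,-]$ are all sound, and your caution about the fringe at $\pi_0$ in part~(b) is the right thing to flag, though it is covered by the complete convergence asserted in Lemma~\ref{hoto.descentBK}.
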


\begin{defn}\label{hoto.descent}
An object $C_\bull\in \S_\pro (\bS)$ satisfies descent if for any object $U\in \bS$
\[
\Gamma (U,C_\bull) \to \Gamma(U, {\rm F}C_\bull)
\]
is a an isomorphism in ${\rm hS}_\pro (\{*\})$. Here ${\rm F}C_\bull$ is a fibrant
replacement in $\S_\pro(\bS)$.
\end{defn}

\section{The Motivic Complex: a Crystalline Construction,}\label{sec.cryscon}

\noindent
In this appendix we continue to assume that $r<p$. We identify the motivic complex $\widetilde\Z_{X_\bull}(r)$ as constructed in Section~\ref{sec.cand} with the complex $\Z_{X_\bull}(r)$ given in definition \ref{Z(r)}.   The later is defined via a cone involving the Nisnevich syntomic complex $\frak{S}_{X_\bull}(r)$  (Definition~\ref{S}). As a preliminary simplification, we may modify the cone \eqref{3.3a} and define 
\ml{1a}{\widetilde{\frak{S}}_{X_\bull}(r) := {\rm Cone}\Big(I(r)\Omega^\bulle_{D_\bull}\oplus \Omega^{\ge r}_{X_\bull}\oplus W\Omega^\bulle_{X_1,\log}[-r]\xrightarrow{\psi} \\p(r)\Omega^\bulle_{X_\bull}\oplus q(r)W\Omega^\bulle_{X_1}\Big).
}
Here $\psi_{2,3}: \Omega^\bulle_{X_1,\log}[-r] \to q(r)W\Omega^\bulle_{X_1}$ is the natural inclusion. We will exhibit a canonical quasi-isomorphism $\widetilde{\frak{S}}_{X_\bull}(r) \simeq \frak{S}_{X_\bull}(r)$.  The desired result for motivic cohomology will follow by a further cone construction for the map $d\log: \Z_{X_1}(r) \to W\Omega^\bulle_{X_1,\log}[-r]$ \eqref{7.4a}. 

Let us write $C^\bulle := \widetilde{\frak{S}}_{X_\bull}(r)$. 
\begin{lem}\label{lem1} $\sH^j(C^\bulle) = (0)$ for $j\ge r+1$, i.e. $\tau_{\le r}C^\bulle \xrightarrow{\simeq} C^\bulle$. 
\end{lem}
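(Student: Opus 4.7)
The plan is to read off the vanishing from the long exact sequence of cohomology sheaves attached to the exact triangle
\[
A^\bulle \xrightarrow{\psi} B^\bulle \to C^\bulle \xrightarrow{[1]},
\]
with $A^\bulle := I(r)\Omega^\bulle_{D_\bull}\oplus \Omega^{\ge r}_{X_\bull}\oplus W\Omega^\bulle_{X_1,\log}[-r]$ and $B^\bulle := p(r)\Omega^\bulle_{X_\bull}\oplus q(r)W\Omega^\bulle_{X_1}$. The desired statement $\sH^j(C^\bulle)=0$ for $j\ge r+1$ is equivalent to $\psi_*:\sH^j(A^\bulle)\to \sH^j(B^\bulle)$ being bijective for every $j\ge r+1$, and this is what I will verify.

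First I will unfold how each summand contributes to cohomology in the range $j\ge r+1$. The log piece $W\Omega^r_{X_1,\log}[-r]$ is concentrated in degree $r$, so it drops out and the component $\psi_{2,3}$ plays no role. The stupid truncation $\Omega^{\ge r}_{X_\bull}$ agrees with $\Omega^\bulle_{X_\bull}$ in cohomological degrees $>r$; similarly $p(r)\Omega^\bulle_{X_\bull}$ differs from $\Omega^\bulle_{X_\bull}$ only in degrees $<r$, so both have the same $\sH^j$ for $j\ge r+1$, and under this identification the inclusion $\psi_{1,2}$ becomes the identity. The remaining components $\psi_{1,1}:I(r)\Omega^\bulle_{D_\bull}\to p(r)\Omega^\bulle_{X_\bull}$ and $\psi_{2,1}:I(r)\Omega^\bulle_{D_\bull}\to q(r)W\Omega^\bulle_{X_1}$ are both quasi-isomorphisms by Proposition~\ref{qis}, and hence induce isomorphisms on every $\sH^j$.

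With these reductions in hand, computing $\psi_*$ in each degree $j\ge r+1$ is a two-by-two matrix exercise: on $\sH^j(I(r)\Omega^\bulle_{D_\bull})\oplus \sH^j(\Omega^\bulle_{X_\bull})$ the map reads
\[
(a,b)\;\longmapsto\;\bigl(\psi_{1,1}(a)+b,\;\psi_{2,1}(a)\bigr),
\]
and since both $\psi_{1,1}$ and $\psi_{2,1}$ are isomorphisms, this is clearly bijective (solve for $a$ from the second coordinate using $\psi_{2,1}^{-1}$, then for $b$ from the first). The long exact sequence then forces $\sH^j(C^\bulle)=0$ for all $j\ge r+1$, which is precisely the claim $\tau_{\le r}C^\bulle\xrightarrow{\simeq}C^\bulle$.

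I do not anticipate any real obstacle: everything rests on degree bookkeeping together with the two quasi-isomorphisms from Proposition~\ref{qis}. The only mild care point is the convention that $\Omega^{\ge r}_{X_\bull}$ denotes the brutal truncation, which introduces a difference between its $\sH^r$ and that of $\Omega^\bulle_{X_\bull}$; but this subtlety lives only in degree $r$ itself and is therefore harmless in the range $j\ge r+1$ relevant to the lemma.
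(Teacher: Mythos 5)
Your argument is correct and takes essentially the same route as the paper: both reduce to checking that $\psi_*$ on cohomology sheaves is suitably bijective/surjective in degrees $\ge r+1$, with the key input being Proposition~\ref{qis} (that $\psi_{1,1}$ and $\psi_{2,1}$ are quasi-isomorphisms) together with the observation that the $W\Omega^r_{X_1,\log}[-r]$, $\Omega^{\ge r}_{X_\bull}$ and $p(r)\Omega^\bulle_{X_\bull}$ summands all coincide (or vanish) with $\Omega^\bulle_{X_\bull}$ in that range. One small inaccuracy worth flagging: you state that $\sH^j(C^\bulle)=0$ for $j\ge r+1$ is \emph{equivalent} to $\psi_*$ being bijective for all $j\ge r+1$, whereas the long exact sequence only requires surjectivity in degree $r+1$ and injectivity in degrees $\ge r+2$; since you go on to verify full bijectivity, which is (strictly) sufficient, the proof is unaffected.
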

\begin{proof} It suffices to show the map
\eq{}{\sH^j\Big(I(r)\Omega^\bulle_D \oplus W\Omega^r_{X_1,\log}[-r]\Big) \to \sH^j\Big(p(r)\Omega^{\le r-1}_{X_\bull} \oplus q(r)W\Omega^\bulle_{X_1}\Big)
}
is an isomorphism for $j\ge r+1$ and is surjective for $j=r$. This follows from the assertion $I(r)\Omega^\bulle_D \simeq q(r)W\Omega^\bulle_{X_1}$ which is a consequence of formulas \eqref{2.4a} and \eqref{2.5a} in the paper. \end{proof}

\begin{lem}\label{lem2} Let $\ve: X_{\et} \to X_{\Nis}$ be the map of sites. Then there is a canonical quasi-isomorphism $\ve^*C^\bulle\simeq \frak{S}_{X_\bull}(r)_{\et}$,  the syntomic complex in the \'etale topology. 
\end{lem}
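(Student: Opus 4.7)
The plan is to reduce the \'etale pullback $\varepsilon^* C^\bulle$ to the syntomic complex by exploiting the quasi-isomorphisms furnished by Proposition~\ref{qis}, which are available precisely in the \'etale topology. First, each of the sheaves entering the definition of $C^\bulle$ transfers compatibly from Nisnevich to \'etale: the coherent-type pieces trivially, and $W_\bull\Omega^r_{X_1,\log}$ by Definition~\ref{cris.wlog} together with Proposition~\ref{kato.thm}. Since $\varepsilon^*$ also commutes with cones and direct sums, $\varepsilon^* C^\bulle$ is the cone of the formally identical map $\psi$, now in the \'etale topology.

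The decisive step is that, by Proposition~\ref{qis}, both components $\psi_{1,1}\colon I(r)\Omega^\bulle_{D_\bull}\to p(r)\Omega^\bulle_{X_\bull}$ (restriction) and $\psi_{2,1}\colon I(r)\Omega^\bulle_{D_\bull}\to q(r)W_\bull\Omega^\bulle_{X_1}$ ($\Phi(F)$) are \'etale quasi-isomorphisms. Inverting them in the derived category identifies the target $p(r)\Omega^\bulle_{X_\bull}\oplus q(r)W_\bull\Omega^\bulle_{X_1}$ with $I(r)\Omega^\bulle_{D_\bull}\oplus I(r)\Omega^\bulle_{D_\bull}$, and $\psi$ becomes $(a,b,c)\mapsto(a+\tilde b(b),\,a+\tilde c(c))$, where $\tilde b$ is the inclusion $\Omega^{\ge r}_{X_\bull}\simeq J(r)\Omega^\bulle_{D_\bull}\hookrightarrow I(r)\Omega^\bulle_{D_\bull}$ (via~\eqref{2.8a}) and $\tilde c = \psi_{2,1}^{-1}\circ\psi_{2,3}$. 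The target substitution $(x,y)\mapsto(x,y-x)$ paired with the source change $a' = a+\tilde b(b)$ diagonalizes $\psi$ into the direct sum of $\mathrm{id}_{I(r)\Omega^\bulle_{D_\bull}}$ and a residual map
\[
\Omega^{\ge r}_{X_\bull}\oplus W_\bull\Omega^r_{X_1,\log}[-r]\xrightarrow{(b,c)\,\mapsto\,\tilde c(c)-\tilde b(b)}I(r)\Omega^\bulle_{D_\bull},
\]
whose cone remains quasi-isomorphic to $\varepsilon^* C^\bulle$ because the identity summand has acyclic cone.

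It remains to recognize this residual cone as $\frak S_{X_\bull}(r)_\et$. By Proposition~\ref{SI}, the map $\tilde c$ coincides with the canonical arrow $W_\bull\Omega^r_{X_1,\log}[-r]\simeq\frak S^I_{X_\bull}(r)_\et\to I(r)\Omega^\bulle_{D_\bull}$ extracted from the defining triangle of the $I$-syntomic complex
\[
\frak S^I_{X_\bull}(r)_\et\to I(r)\Omega^\bulle_{D_\bull}\xrightarrow{\,1-f_r\,}\Omega^\bulle_{D_\bull}\xrightarrow{[1]}\cdots,
\]
so $\mathrm{Cone}(\tilde c)\simeq\Omega^\bulle_{D_\bull}$ with the projection $I(r)\Omega^\bulle_{D_\bull}\to\Omega^\bulle_{D_\bull}$ equal to $1-f_r$. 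The octahedral axiom then rewrites the residual cone as $\mathrm{Cone}(\Omega^{\ge r}_{X_\bull}\to\mathrm{Cone}(\tilde c))$, and since the composite $J(r)\Omega^\bulle_{D_\bull}\hookrightarrow I(r)\Omega^\bulle_{D_\bull}\xrightarrow{1-f_r}\Omega^\bulle_{D_\bull}$ is by Definition~\ref{frJ} precisely the map $1-f_r$ appearing in Definition~\ref{S}, one obtains the desired quasi-isomorphism $\varepsilon^* C^\bulle\simeq\frak S_{X_\bull}(r)_\et$.

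The main obstacle is shift bookkeeping: the several nested cone manipulations each carry a potential $[\pm 1]$, and one must verify that they combine to yield $\frak S_{X_\bull}(r)_\et$ on the nose, which also requires reconciling the sign and shift conventions used in~\eqref{1a} and Definition~\ref{S}. A secondary subtlety is ensuring that the identification of $\tilde c$ with the canonical map $\frak S^I_{X_\bull}(r)_\et\to I(r)\Omega^\bulle_{D_\bull}$ holds on the nose rather than merely up to automorphism, which requires a careful unwinding of the chain of quasi-isomorphisms~\eqref{syn.chainofmaps2} used in the proof of Proposition~\ref{SI}.
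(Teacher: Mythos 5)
Your proposal is correct and ends up in the same place, but it is organized quite differently from the paper's argument, so a comparison is worthwhile. The paper's proof first uses Corollary~\ref{syn.cor1} to replace the pair $W_\bull\Omega^r_{X_1,\log}[-r]\to q(r)W_\bull\Omega^\bulle_{X_1}$ by $W_\bull\Omega^\bulle_{X_1}$, rewriting $C^\bulle$ as the cone of a two-by-two array $\Omega^{\ge r}_{X_\bull}\oplus I(r)\Omega^\bulle_{D_\bull}\to p(r)\Omega^\bulle_{X_\bull}\oplus W_\bull\Omega^\bulle_{X_1}$, and then exhibits in one stroke the commutative square \eqref{4a} with top row $J(r)\Omega^\bulle_{D_\bull}\xrightarrow{1-f_r}\Omega^\bulle_{D_\bull}$ and vertical arrows $(-\xi,\iota)$ and $(0,\mu)$; the verification that this square induces a quasi-isomorphism on (shifted) horizontal cones is left implicit, but amounts to observing that both vertical cones reduce to $I(r)\Omega^\bulle_{D_\bull}$ and $p(r)\Omega^\bulle_{X_\bull}$ respectively, with induced map equal to the restriction quasi-isomorphism of Proposition~\ref{qis}. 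You instead perform the reduction explicitly as a change of basis on the source and target of $\psi$: first invert the two quasi-isomorphisms $\psi_{1,1}$ and $\psi_{2,1}$ of Proposition~\ref{qis} to identify the target with $I(r)\Omega^\bulle_{D_\bull}\oplus I(r)\Omega^\bulle_{D_\bull}$, then diagonalize to peel off an identity summand, and then invoke Proposition~\ref{SI} and the octahedral axiom to identify the residual cone with $\frak{S}_{X_\bull}(r)_\et$. The two approaches rest on exactly the same two inputs --- the quasi-isomorphisms of Proposition~\ref{qis} in the \'etale topology and the triangle of Corollary~\ref{syn.cor1} (the latter entering for you through Proposition~\ref{SI}) --- but in opposite order, and you are more explicit about the linear-algebra mechanism that makes the paper's diagram \eqref{4a} work. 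Your approach is perhaps more transparent about \emph{why} the quasi-isomorphism exists, at the cost of a longer chain of reductions; the paper's is shorter but leaves a nontrivial verification to the reader. Two small points: Proposition~\ref{qis} holds both Nisnevich- and \'etale-locally, so it is really Corollary~\ref{syn.cor1} (via Proposition~\ref{SI}) that forces the \'etale hypothesis, not Proposition~\ref{qis}; and you are right to flag the shift bookkeeping --- tracing through your argument gives ${\rm Cone}(\psi)\simeq{\rm Cone}(1-f_r)=\frak{S}_{X_\bull}(r)_\et[1]$, which is consistent with the lemma only if the definition \eqref{1a} of $\widetilde{\frak{S}}_{X_\bull}(r)$ is read with a $[-1]$ (as in \eqref{3.3a} and Definition~\ref{Z(r)}), so that caveat is well taken.
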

\begin{proof}There is a natural inclusion of cones 
$${\rm Cone}(W\Omega^r_{X_1,\log}[-r] \to q(r)W\Omega^\bulle_{X_1})[-1] \to C^\bulle. 
$$
In the \'etale site, the cone on the left is quasi-isomorphic to $W\Omega^\bulle_{X_1}[-1]$ (Corollary \ref{syn.cor1}). As a consequence, in the \'etale site we get
\eq{}{C^\bulle[-1] \simeq {\rm Cone}(\Omega_{X_\bull}^{\ge r} \oplus p(r)\Omega^\bulle_D \to p(r)\Omega^\bulle_{X_\bull}\oplus W\Omega^\bulle_{X_1})[-1]. 
}
(The map $p(r)\Omega^\bulle_D \to W\Omega^\bulle_{X_1}$ is $(1-F_r)\circ \mu$, where $\mu: \Omega^\bulle_D \to W\Omega^\bulle_{X_1}$ is the composition of \eqref{2.4a} and \eqref{2.7a} in the paper.) Let $\xi: J(r)\Omega^\bulle_{D_\bull} \to \Omega_{X_\bull}^{\ge r}$ be as in \eqref{2.8a} in the paper, and let $\iota: J(r)\Omega^\bulle_{D_\bull} \subset \Omega^\bulle_{D_\bull}$ be the natural inclusion. Construct a commutative diagram
\eq{4a}{\begin{CD} J(r)\Omega^\bulle_{D_\bull} @>1-f_r>> \Omega^\bulle_{D_\bull} \\
@VV(-\xi, \iota)V @VV(0, \mu) V \\
\Omega_{X_\bull}^{\ge r} \oplus I(r)\Omega^\bulle_{D_\bull} @>>> p(r)\Omega^\bulle_{X_\bull}\oplus W\Omega^\bulle_{X_1}.
\end{CD}
}
This diagram yields the desired quasi-isomorphism in the \'etale site. 
\end{proof}

We have by Lemma \ref{lem2}, $C^\bulle \to R\ve_*\ve^*C^\bulle \simeq R\ve_*\frak{S}_{X_\bull}(r)_{et}$. Applying $\tau_{\le r}$ and using Lemma \ref{lem1} we get 
\eq{5}{C^\bulle \simeq \tau_{\le r} C^\bulle \to \tau_{\le r}R\ve_*\frak{S}_{X_\bull}(r)_{et}=:\frak{S}_{X_\bull}(r)_{Nis}. 
}

We must show the map \eqref{5} is a quasi-isomorphism. Consider the commutative diagram
\eq{6a}{\begin{CD}\Omega_{X_\bull}^{\ge r} \oplus I(r)\Omega^\bulle_{D_\bull} @>>> p(r)\Omega^\bulle_{X_\bull}\oplus W\Omega^\bulle_{X_1} \\
@AAA @AAA \\
\Omega_{X_\bull}^{\ge r} \oplus I(r)\Omega_D \oplus W\Omega_{X_1,\log}^r[-r] @>>> p(r)\Omega^\bulle_{X_\bull}\oplus q(r)W\Omega^\bulle_{X_1}
\end{CD}
}
Here the bottom line is as in \eqref{1a} and the top as in \eqref{4a}. The sheaves on the top are $\ve$-acyclic, so the top complex represents $R\ve_*\frak{S}_{X_\bull}(r)_{\et}$ and the whole diagram represents $C^\bulle \to R\ve_*\ve^*C^\bulle \simeq R\ve_*\frak{S}_{X_\bull}(r)_{\et}$. It will suffice to check that this vertical map of Nisnevich complexes induces an isomorphism in cohomology in degrees $\le r$. 

In the Nisnevich topology, consider the double complex of complexes which we position so $W\Omega^r_{X_1,\log}[-r]$ is in position $(0,0)$. 
\eq{7a}{\begin{CD}0 @>>> W\Omega^\bulle_{X_1} \\
@AA 0 A @AA 1-F_r A \\
W\Omega_{X_1,\log}^r[-r] @>\inj >> q(r)W\Omega^\bulle_{X_1}
\end{CD}
}
\begin{lem}The total complex of Nisnevich sheaves associated to \eqref{7a} is acyclic away from degree $r+2$.
\end{lem}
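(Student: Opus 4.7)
The plan is to analyze the total complex $T$ of \eqref{7a} via its column filtration. Let $L := W\Omega^r_{X_1,\log}[-r]$ be the total of the left column and $R$ the total of the right column; since the right column is the two-term vertical complex $q(r)W\Omega^\bulle_{X_1} \xrightarrow{1-F_r} W\Omega^\bulle_{X_1}$, its total is canonically $R = \cone(1-F_r)[-2]$. The horizontal differentials of \eqref{7a} assemble into a morphism $\phi\colon L \to R[1]$, and $T$ sits in a distinguished triangle
\[
R \longrightarrow T \longrightarrow L \xrightarrow{\phi} R[1]
\]
in $\D_\pro(X_1)_\Nis$. The strategy is to compute $\sH^*(R)$ and the cohomology map $\sH^*(\phi)$, then read off $\sH^*(T)$ from the resulting long exact sequence.

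First I would compute the Nisnevich cohomology sheaves of $R$ by reusing the chain of quasi-isomorphisms from the proof of Proposition~\ref{SI}. Lemmas \ref{syn.lem2} and \ref{syn.lem3} (both valid in the Nisnevich topology) give
\[
\cone(1-F_r)[-1] \simeq \cone\big(W\Omega^r_{X_1}/dVW\Omega^{r-1}_{X_1} \xrightarrow{1-F_r} W\Omega^r_{X_1}/dW\Omega^{r-1}_{X_1}\big)[-r-1]
\]
in $\D_\pro(X_1)_\Nis$. The Nisnevich half of Lemma~\ref{Wlog} asserts that the displayed $1-F_r$ is exact on the left and in the middle; write $K$ for its Nisnevich cokernel. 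Then the two-term cone on the right has $\sH^{-1} = W\Omega^r_{X_1,\log}$, $\sH^0 = K$, and zero elsewhere. Accounting for the shifts, this gives $\sH^{r+1}(R) = W\Omega^r_{X_1,\log}$, $\sH^{r+2}(R) = K$, and $\sH^i(R) = 0$ otherwise.

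Next I would identify $\phi$ on cohomology. Its only potentially nonzero component is $\sH^r(\phi)\colon W\Omega^r_{X_1,\log} \to \sH^{r+1}(R)$, which by unwinding the bicomplex sends $x$ to the class of the cocycle $(\iota(x), 0) \in q(r)W\Omega^r \oplus W\Omega^{r-1}$, where $\iota$ is the natural inclusion $W\Omega^r_{X_1,\log} \hookrightarrow W\Omega^r_{X_1}$; the cocycle condition holds because $\iota(x)$ is closed and fixed by $F$, so $(1-F_r)(\iota(x))=0$. Chasing this class through the identifications above, $[(\iota(x),0)]$ corresponds to $[\iota(x)] \in W\Omega^r_{X_1}/dVW\Omega^{r-1}_{X_1}$, which under the kernel description of Lemma~\ref{Wlog} represents precisely the element $x$. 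Hence $\sH^r(\phi)$ is the identity.

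Plugging these two inputs into the long exact sequence of the triangle, all contributions outside degrees $r, r+1, r+2$ vanish automatically. The segment around degrees $r, r+1$ collapses to
\[
0 \to \sH^r(T) \to W\Omega^r_{X_1,\log} \xrightarrow{\mathrm{id}} W\Omega^r_{X_1,\log} \to \sH^{r+1}(T) \to 0,
\]
forcing $\sH^r(T) = \sH^{r+1}(T) = 0$, while the segment at $r+2$ yields $\sH^{r+2}(T) = K$. The main technical point is the identification of $\sH^r(\phi)$ as the identity, which hinges on matching the cocycle $(\iota(x),0)$ to $x$ under the kernel description of Lemma~\ref{Wlog}; the remainder is routine bookkeeping with cones, shifts, and long exact sequences.
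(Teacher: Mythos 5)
Your argument is correct and reaches the same conclusion as the paper, but it filters the bicomplex in the opposite direction. You totalize the \emph{columns}, producing the triangle $R\to T\to L\xrightarrow{\phi}R[1]$ with $L=W\Omega^r_{X_1,\log}[-r]$ and $R=\cone(1-F_r)[-2]$, and you then must compute $\sH^\bullet(R)$ and explicitly identify the connecting map $\sH^r(\phi)$ as the identity. The paper instead totalizes the \emph{rows}: since the horizontal map in the bottom row is injective, the bottom-row total is quasi-isomorphic to the quotient $q(r)W_\bull\Omega^\bulle/W_\bull\Omega^r_{X_1,\log}[-r]$, and the whole bicomplex collapses to the triangle
\[
T \longrightarrow \big(q(r)W_\bull\Omega^\bulle/W_\bull\Omega^r_{X_1,\log}[-r]\big)[-1] \xrightarrow{1-F_r} W_\bull\Omega^\bulle[-1]\xrightarrow{+1}.
\]
Here Lemmas~\ref{syn.lem2} and \ref{syn.lem3} show $1-F_r$ is a quasi-isomorphism away from degree $r$, and the Nisnevich half of Lemma~\ref{Wlog} shows that in degree $r$ it is injective (since the kernel of $1-F_r$ on $\sH^r(q(r)W_\bull\Omega^\bulle)$ is exactly the subsheaf $W_\bull\Omega^r_{\log}$ that was quotiented out) but not surjective. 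This immediately concentrates $T$ in one degree without ever touching the connecting morphism. The content is the same in both proofs --- you each use Lemmas~\ref{syn.lem2}, \ref{syn.lem3}, and the Nisnevich exactness in Lemma~\ref{Wlog} --- but the row filtration buys you a shortcut: the kernel identification of Lemma~\ref{Wlog} is absorbed directly into the quotient complex, whereas in your column filtration the same fact reappears as the verification that $\sH^r(\phi)$ is the identity. Your cocycle chase through $(\iota(x),0)$ is the correct way to do that verification; just note that it is precisely the step the paper's organization makes unnecessary.
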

\begin{proof}Writing $T$ fo the total complex, we have a triangle 
$$T \to (q(r)W\Omega^\bulle/W\Omega^r_{\log}[-r])[-1] \xrightarrow{1-F_r} W\Omega^\bulle[-1] \xrightarrow{+1}
$$
The map $1-F_r$ induces isomorphisms in cohomology (Lemmas 3.4 and 3.5) except 
$$1-F_r : \sH^r\Big(q(r)W\Omega^\bulle/W\Omega^r_{\log}[-r]\Big) \cong  \sH^r(q(r)W\Omega^\bulle)/W\Omega^r_{\log} \to \sH^r(W\Omega^\bulle)
$$
is not surjective so $\sH^i(T) = (0), i\neq r+2$. \end{proof}

Let $U$ be the corresponding total complex for the diagram \eqref{6a}. The inclusion $T \inj U$ is a quasi-isomorphism so $\sH^i(U) = (0), i\neq r+2$. It follows that $\sH^i(C^\bulle) \to \sH^i(R\ve_*\frak{S}_{X_\bull}(r)_{et})$ is an isomorphism except possibly for $i=r+1, r+2$. This implies $\sH^i(\tau_{\le r}C^\bulle) \to \sH^i(\tau_{\le r}R\ve_*\frak{S}_{X_\bull}(r)_{\et})$ is a quasi-isomorphism for all $i$. From Lemma \ref{lem1} we conclude 
$$\widetilde{\frak{S}}_{X_\bull}(r)= C^\bulle \to \tau_{\le r}R\ve_*\frak{S}_{X_\bull}(r)_{\et}=\frak{S}_{X_\bull}(r)_{\Nis}$$ 
is a quasi-isomorphism as desired.

\end{appendix}

\bigskip

\bibliographystyle{plain}

\begin{thebibliography}{99}

\bibitem[EGA3]{EGA3}
Grothendieck, A., Dieudonn\'e, J.:
{\it \'El\'ements de g\'eom\'etrie alg\'ebrique III:  \'Etude cohomologique des faisceaux coh\'erents} 1961--1963.
\bibitem[SGA4]{SGA4}
Artin, M., Grothendieck, A., Verdier, J.-L.:
{\it Th\'orie des topos et cohomologie \'etale des sch\'emas}, 1963--1964, Lecture Notes in Mathematics {\bf 269}, {\bf 270} and {\bf 305}, 1972/3

\bibitem[SGA7]{SGA7}
Deligne, P.,
Grothendieck, A., Katz, N.:
{\it Groupes de monodromie en g\'eom\'etrie alg\'ebrique}, 1967--1969, Lecture Notes in Mathematics {\bf 288} and {\bf 340}, 1972/3.

\bibitem[BBD]{BBD} Beilinson, A., Bernstein, J., Deligne, P.: {\it
Faisceaux pervers}, in Analysis and topology on singular spaces, I (Luminy,
1981), 5--171, Ast\'erisque {\bf 100}  (1982).
\bibitem[Be]{Be}Berthelot, P.: {\it Cohomologie cristalline des sch\'emas de
caract\'eristique $p>0$}, Lecture Notes {\bf 407}, Springer-Verlag, New York and
Berlin
(1974).
\bibitem[BO1]{BO} Berthelot, P., Ogus, A.: {\it Notes on Crystalline
Cohomology}, Mathematical Notes (1978), Princeton University Press.
\bibitem[BO2]{BO2} 
Berthelot, P., Ogus, A.: {\it
$F$-isocrystals and de Rham cohomology I}, 
Invent. math. {\bf 72} (1983), no. 2, 159--199. 
\bibitem[B1]{Blo} Bloch, S:
{\it Semi-regularity and de Rham cohomology},
Invent. Math. 17 (1972), 51--66.

\bibitem[B2]{Bl} Bloch, S.:  {\it Algebraic
Cycles and Higher $K$-Theory}, Adv. in Math. 
{\bf 61} (1986), 267--304.
\bibitem[BK]{BK} Bloch, S., Kato, K.: {\it $p$-adic \'etale cohomology }, Publ.
Math. de l'I.H.\'E.S {\bf 63} (1986), 1--47.
\bibitem[BHM]{BHM} 
B\"okstedt, M., Hsiang, W., Madsen, I. {\it
The cyclotomic trace and algebraic $K$-theory of spaces}, 
Invent. math. {\bf 111} (1993), no. 3, 465--539. 
\bibitem[BoK]{BoK}
Bousfield, A., Kan, D.: {\it
Homotopy limits, completions and localizations}, 
Lecture Notes in Mathematics, Vol. {\bf 304}. Springer-Verlag (1972), 348 pp. 
\bibitem[CDK]{CDK} Cattani, E., Deligne, P., Kaplan, :  {\it On the locus of Hodge classes},  Journal of the Am. Math. Soc. {\bf 8}  (1995) no 2, 483--506.
\bibitem[CTHK]{CTHK} Colliot-Th\'el\`ene, J.-L., Hoobler, R.; Kahn, B.:  {\it
The Bloch-Ogus-Gabber theorem}, Algebraic K-theory (Toronto, ON, 1996), 31--94, 
Fields Inst. Commun., 16, Amer. Math. Soc., Providence, RI, 1997.  
\bibitem[CTSS]{CTSS} Colliot-Th\'el\`ene, J.-L., Sansuc, J.-J., Soul\'e, C.: 
{\it Torsion dans le groupe de Chow de codimension deux}, Duke  Math. J. {\bf
50} (1983) no3, 763--801.
\bibitem[De1]{DelK3}
Deligne, P.: {\it
Rel\`evement des surfaces $K3$ en caract\'eristique nulle}, 
Prepared for publication by Luc Illusie. Lecture Notes in Math., {\bf 868}, Algebraic
surfaces (Orsay, 1976--78), pp. 58--79, Springer, Berlin-New York, 1981. 
\bibitem[De2]{DH2}
Deligne, P.: {\it  Th\'eorie de Hodge II}, 
 Publ. Math. de l'I.H.\'E.S
 {\bf 40} (1971), 5--57.
\bibitem[DS]{DS}
 Dennis, R., Stein, M.: {\it
$K_2$ of radical ideals and semi-local rings revisited}, Algebraic $K$-theory II (Proc.
Conf., Battelle Memorial Inst., Seattle, Wash., 1972), pp. 281--303. Lecture Notes in
Math. Vol. {\bf 342}, Springer, Berlin, 1973. 
\bibitem[EM]{EVM} Elbaz-Vincent, P., M\"uller-Stach, S.: {\it
Milnor $K$-theory of rings, higher Chow groups and applications},
Invent. Math. 148 (2002), no. 1, 177--206. 
\bibitem[E]{Elk} 
Elkik, R.: {\it
Solutions d'\'equations \`a coefficients dans un anneau hens\'elien},
Ann. Sci. \'Ecole Norm. Sup. (4) {\bf 6} (1973), 553--603 (1974). 
\bibitem[Em]{Em}
Emerton, M.: {\it
A $p$-adic variational Hodge conjecture and modular forms with complex
multiplication}, preprint.
\bibitem[FM]{FonMes}
Fontaine, J.-M., Messing, W.:
{\it $p$-adic periods and $p$-adic \'etale cohomology}, Contemporary Mathematica, vol. {\bf 87} (1987),  179--207.
\bibitem[GH1]{GH1} Geisser, T., Hesselholt,
L.: {\it On the relative and
bi-relative $K$-theory of rings of finite characteristic}, J. Amer. Math. Soc.
{\bf 24} (2011), 29--49.
\bibitem[GH2]{GH2}
Geisser, T., Hesselholt, L. : {\it
Topological cyclic homology of schemes}, Algebraic $K$-theory (Seattle, WA, 1997), 41--87, 
Proc. Sympos. Pure Math., {\bf 67}, Amer. Math. Soc., Providence, RI, 1999. 
\bibitem[GH3]{GH3}
Geisser, T., Hesselholt, L. : {\it
On the $K$-theory and topological cyclic homology of smooth schemes over a discrete valuation ring},
Trans. Amer. Math. Soc. {\bf 358} (2006), no. 1, 131--145.
\bibitem[GH4]{GH4}
Geisser, T., Hesselholt, L.: {\it
The de Rham-Witt complex and $p$-adic vanishing cycles}, 
J. Amer. Math. Soc. {\bf 19} (2006), no. 1, 1--36.
\bibitem[GL]{GL} Geisser, T., Levine, M.: {\it The K-theory of fields in
characteristic $p$},
Invent. math. {\bf 139} (2000), no. 3, 459--493.
\bibitem[Gil]{Gil} Gillet, H.: {\it Riemann-Roch Theorems for Higher
$K$-Theory}, Adv. in Math. {\bf 40} (1981), 203--289. 
\bibitem[Go]{Good} Goodwillie, T.: {\it
Relative algebraic K-theory and cyclic homology}, 
Annals of Math. (2) {\bf 124} (1986), no. 2, 347--402. 
\bibitem[G]{G}  Gros, M.: {\it Classes de Chern et classes de cycles en
cohomologie de Hodge-Witt logarithmique}, M\'emoires de la S.M.F.  2-i\`eme
s\'erie, tome {\bf 21} (1985), 1--87.
\bibitem[GS]{GS} Gros, M., Suwa, N.: {\it La conjecture de Gersten pour les
faisceaux de Hodge-Witt logarithmiques}, Duke math. J. {\bf 57} 2 (1988),
615--628.
\bibitem[GK]{GK} Gros, M., Kurihara, M.: {\it R\'egulateurs syntomiques et
valeurs de fonctions $L$ $p$-adiques I}, Invent. math. {\bf 99} (1990),
293--320.
\bibitem[G]{GdR}
Grothendieck, A.: {\it
On the de Rham cohomology of algebraic varieties}, Publ. Math. I.H.\'E.S
{\bf 29} (1966), 95--103. 
\bibitem[HM]{HM} Hesselholt, L.,  Madsen, I.: {\it  On the De Rham-Witt complex
in mixed
characteristic},  Ann. Sci. \'Ecole  Norm. Sup. (4) {\bf 37} (2004), no. 1,
1--43.
\bibitem[Hov]{Hov} Hovey, M.: {\it 
Model categories}, 
Mathematical Surveys and Monographs, 63. American Mathematical Society, Providence, RI, 1999.
\bibitem[Il]{Il} Illusie, L.: {\it Complexe de de Rham-Witt et cohomologie
cristalline}, Annales Scientifiques de l'\'E.N.S, 4e s\'erie, tome {\bf 12}, no
4 (1979), 501--661.
\bibitem[Isa1]{Isa1} Isaksen, D.: 
{\it Strict model structures for pro-categories}, Categorical decomposition techniques in algebraic topology (Isle of Skye, 2001), 179--198, 
Progr. Math., {\bf 215}, Birkh\"auser, Basel, 2004. 
\bibitem[Isa2]{Isa2} Isaksen, D.:
{\it A model structure on the category of pro-simplicial sets},
Trans. Amer. Math. Soc. {\bf 353} (2001), no. 7, 2805--2841. 
\bibitem[Iz]{Iz}
Izhboldin, O.: {\it
On $p$-torsion in $K^M_*$ for fields of characteristic $p$}, Algebraic $K$-theory, 129--144, 
Adv. Soviet Math., {\bf 4} , Amer. Math. Soc., Providence, RI, 1991. 
\bibitem[Ja]{Ja} Jannsen, U.: {\it Continuous \'etale cohomology},  Math. Ann.
{\bf 280} (1988), no. 2, 207--245.
\bibitem[Jar]{Jar1} Jardine, J. F.: {\it Simplicial presheaves}, 
J. Pure Appl. Algebra {\bf 47} (1987), no. 1, 35--87. 
\bibitem[K1]{Ka} Kato, K.: {\it Galois cohomology of complete discrete valued
fields}, in Algebraic $K$-Theory Part II, Oberwolfach 1980, 215--238, Lecture
Notes in Mathematics {\bf 967} (1982), Springer Verlag. 
\bibitem[K2]{Ka2} Kato, K.: {\it On $p$-adic vanishing cycles (application of
ideas of Fontaine-Messing)}, in  Algebraic Geometry, Sendai, 1985, 207--251,
Adv. Stud. Pure Math., {\bf 10}, North-Holland, Amsterdam, 1987.
\bibitem[Kt]{Katz}
Katz, N.: {\it
Nilpotent connections and the monodromy theorem: Applications of a result of Turrittin}, 
Inst. Hautes Etudes Sci. Publ. Math. No. 39 (1970), 175--232. 
\bibitem[Ke1]{Ke} Kerz, M.: {\it The Gersten conjecture for Milnor $K$-theory},
Invent. math. {\bf 175} 1 (2009), 1--33.  
\bibitem[Ke2]{Ke2} Kerz, M.: {\it Milnor $K$-theory of local rings with finite residue fields},
J. Algebraic Geom. {\bf 19} (2010), no. 1, 173--191. 
\bibitem[Ku1]{Ku1} Kurihara, M.: {\it A Note on $p$-adic \'Etale Cohomology}, Proc. Japan Acd. {\bf 63}, Ser. A (1987), 275--278.
\bibitem[Ku2]{Ku} Kurihara, M.: {\it Abelian extensions of an absolutely
unramified local
field with general residue field}, Invent. math. {\bf 93} (1988),
451--480.
\bibitem[Ku3]{Ku2} Kurihara, M. 
{\it The exponential homomorphisms for the Milnor $K$-groups and an explicit reciprocity law},
J. Reine Angew. Math. {\bf 498} (1998), 201--221. 
\bibitem[MVW]{MVW} Mazza, C., Voevodsky, V., Weibel, C.: {\it Lecture notes on
motivic cohomology}, Clay Mathematics Monographs, {\bf 2},  American
Mathematical Society (2006).
\bibitem[Mc]{Mc}  McCarthy, R.: {\it Relative algebraic $K$-theory and
topological cyclic homology}, Acta Math. {\bf 179}
(1997), 197--222.
\bibitem[Mi]{Mi} Milne, J.: {\it Values of zeta functions of varieties over
finite fields}, Amer. J. Math. {\bf 108}, (1986), 297--360.
\bibitem[Mil]{Mil} 
Milnor, J.: {\it
Introduction to algebraic $K$-theory}, 
Annals of Mathematics Studies, No. {\bf 72}. Princeton University Press (1971).
\bibitem[Ne]{Ne} Neeman, A.:  {\it Triangulated Categories}, Annals of
Mathematical Studies {\bf 148}, Princeton University Press (2001).
\bibitem[Pu]{Push} Pushin, O.: {\it
Higher Chern classes and Steenrod operations in motivic cohomology}, 
$K$-Theory {\bf 31} (2004), no. 4, 307--321. 
\bibitem[Q]{Q}
Quillen, D.: {\it
Homotopical algebra},
Lecture Notes in Mathematics, No. {\bf 43} Springer-Verlag, Berlin-New York 1967. 
\bibitem[Sa]{Sa}Sato, K.:
{\it Characteristic classes for $p$-adic \'etale Tate twists and the image of $p$-adic
  regulators}, preprint (2010).
\bibitem[Sp]{Sp} Spaltenstein, N.: {\it Resolutions of unbounded complexes,}
Comp. Math. {\bf 65} 2 (1988), 121--154.
\bibitem[Sr]{Srin}
Srinivas, V.: {\it
Algebraic K-theory}, Modern Birkh\"auser Classics. Birkh\"auser Boston, Inc., Boston, MA, 2008.
\bibitem[SV]{SV} Suslin, A., Voevodsky, V.: {\it  Bloch-Kato conjecture and
motivic cohomology with finite coefficients}, in The Arithmetic and Geometry of
Algebraic Cycles, Nato Sciences Series, Series C, vol. {\bf 548} (2002),
117--189.
\bibitem[TT]{TT} Thomason, R., Trobaugh, T.: {\it Higher algebraic $K$-theory of
schemes and of derived categories}, The Grothendieck Festschrift, Vol. III, 
Progress Math. {\bf 88} (1990), 247--435.
\bibitem[SK]{SK}van der Kallen,
W., Stienstra, J.: {\it The relative $K2$ of truncated
polynomial rings}, J. Pure and
Applied Algebra {\bf 34} (1984), 277-289.
\bibitem[W]{Weib}Weibel, C.: {\it
An introduction to homological algebra},
Cambridge University Press (1994).
\end{thebibliography}

\renewcommand\refname{References}

\end{document}